\theoremstyle{plain}
\newtheorem{theorem}[subsection]{Theorem}
\newtheorem{corollary}[subsection]{Corollary}
\newtheorem{lemma}[subsection]{Lemma}
\theoremstyle{definition}
\newtheorem{prop}[subsection]{Proposition}
\newtheorem{cor}[subsection]{Corollary}
\newtheorem{remark}[subsection]{Remark}
\newcommand{\uu}{\cup}
\newcommand{\ii}{\cap}
\newcommand{\UU}{\bigcup}
\newcommand{\II}{\bigcap}
\newcommand{\sci}{\subset}
\newcommand{\es}{\emptyset}
\newcommand{\set}[1]{\{#1\}}
\newcommand{\gd}{\delta}
\renewcommand{\gg}{\gamma}
\newcommand{\gh}{\eta}
\newcommand{\gm}{\mu}
\newcommand{\go}{\omega}
\newcommand{\gq}{\theta}
\newcommand{\gs}{\sigma}
\newcommand{\gt}{\tau}
\newcommand{\tit}{\textit}
\newcommand{\C}[1]{\mathcal{#1}}
\newcommand{\D}[1]{\mathbb{#1}}
\newcommand{\F}[1]{\mathfrak{#1}}
\newcommand{\te}{\text}
\newcommand{\ep}{\epsilon}
\begin{document}

\title{A class of measures and non-stationary fractals associated to $f$-expansions}
\author{Eugen Mihailescu and Mrinal Kanti Roychowdhury}
\date{}
\maketitle

\begin{abstract}

We construct first a class of Moran fractals in $\mathbb R^d$ with countably many generators and non-stationary contraction rates;  at each step $n$, the contractions depend on $n$-truncated sequences, and are related to asymptotic letter frequencies. In some cases the sets of contractions may be infinite at each step. We show that the Hausdorff dimension of such a fractal is equal to the zero $h$ of a pressure function. We prove that the dimensions of these sets depend \textit{real analytically} on the frequencies.  \  Next, we apply the above construction to obtain non-stationary fractals $E(x; f) \subset \mathbb R^d$ associated to \textit{$f$-expansions} of real numbers $x$, and study the dependence of these fractals on $x$. We consider for instance $\beta$-expansions, the continued fraction expansion and other $f$-expansions.
By employing the Ergodic Theorem for invariant absolutely continuous measures and equilibrium measures,  and using some probabilities for which the digits become independent random variables, we study the function $x \mapsto \text{dim}_\text{H}(E(x; f))$ on the respective set of quasinormal numbers $x \in [0, 1)$.
We investigate also another class of fractals $\tilde E_f(x) \subset \mathbb R^d$ for which, \textit{both} the non-stationary contraction vectors \textit{and} the asymptotic frequencies depend on the $f$-representation of $x$. We obtain then some properties of the digits of $x$, related to $\tilde E_f(x)$ and to equilibrium measures.

\end{abstract}

\

\textbf{MSC 2010:} 28A80, 28A78,  37A45, 11K55, 37A25.

\textbf{Keywords:} Moran fractals, non-stationary contractions, Hausdorff dimension, $f$-expansions, ergodic measures, frequencies, pressure, $\beta$-expansions, real-analyticity, random variables.

\section{Introduction and outline of results.}

In this paper we investigate a class of \textit{Moran fractals} in $\mathbb R^d$ generated by countably many similarities, which are composed according to certain rules that depend on each iteration step. We study dimension functions for families of non-stationary fractals associated to various $f$-expansions of real numbers $x \in [0, 1)$. In some cases the constructed fractals depend on all the digits of $x$ in $f$-expansion, and we obtain relations between these fractals, the behaviour of the  digits of $x$, and equilibrium measures $\mu_\phi$ associated to $T_f$ and to  potentials $\phi$ on $[0, 1)$. 

First, let $X \subset \mathbb R^d$ be a nonempty compact subset such that $\te{cl(int}(X))=X$. Let $S_n: X \to X,  \ n\geq 1$ be an infinite system of contractive similarities with similarity ratios $c_n$, i.e.
\[|S_n(x)-S_n(y)|=c_n|x-y| \te{ for all }  x, y \in X,\]
where $0<c_n<1$, and $\sup_{n\geq 1}c_n<1$. Denote by $\C K(X)$ the class of nonempty relatively compact subsets of $X$, and the set function $\C S : \C K(X) \to \C K(X)$, $\C S(A)=\UU_{n\geq 1} S_n(A)$.
The sets $S_{i_1}\circ\cdots \circ S_{i_k}(A)$, where $i_k\geq 1$ for any $k\geq 1$, are called the \textit{basic elements} of order $k$.
If   $\sup_{n\geq 1}c_n<1$, then in \cite{M} was obtained a self-similar $\mathcal{S}$-invariant set $E \sci X$ defined by $$E = \mathop{\cup}\limits_{(i_1, i_2, \ldots) \in \mathbb N^\infty} \mathop{\cap}\limits_{k \ge 1} S_{i_1} \circ \ldots \circ S_{i_k}(X)$$

For $s\geq 0$, let $\C H^s(E)$ denote the $s$-dimensional Hausdorff measure, and dim$_{\te{H}}(E)$ denote the Hausdorff dimension of $E$. A set $E$ is called an $s$-set, if $0<\C H^s(E)<\infty$ (\cite{F}, \cite{Ma}). \

In this paper, we first construct a new class of self-similar sets, such that at each step of the construction, the number of basic elements is infinite, the Open Set Condition is satisfied, and the contraction rates at step $n$ vary depending on $n$ and on the asymptotic frequencies with which certain letters appear in the corresponding truncated sequence.  \ 
In our case we use a double-index sequence of similarities $\{S_{i, j}\}_{i, j \ge 1}$ of the compact set $ X \subset \mathbb R^d$,  which satisfy Open Set Condition. Our fractal sets defined in (\ref{def}) are of the form $$E = \mathop{\bigcup}\limits_{(i_1, i_2, \ldots) \in \mathbb N^\mathbb N} \mathop{\bigcap}\limits_{j=1}^\infty  S_{j, i_j} \circ \ldots \circ S_{1, i_1}(X),$$
where the similarities $S_{i, j}$ will be determined by some fixed sequence $\omega = (\omega_1, \omega_2, \ldots)$, such that $\omega$ has letter frequencies given by a stochastic vector $\eta$.

For such an infinitely generated Moran set $E$ with non-stationary contraction rates and stochastic frequency vector $\eta$, we define the \textit{pressure function} $P(t)$, and show that there exists a unique nonnegative number $h$ such that $P(h)=0$.\
Then we show in \textbf{Theorem \ref{theorem}} that
\[\text{dim}_{\te{H}}(E)=h, \ \te{and} \ \C H^h(E) <\infty\]
Unlike for finitely generated Moran sets, even a stationary infinitely generated Moran set $E$ may not be an $h$-set,  except when $h\geq d$. An example in this regard is given in \cite{M}.

Let us emphasize that our Moran sets are different from the infinitely generated sets with stationary contraction rates of Moran (\cite{M}), since in that case the contraction rate at step $n+1$ does not depend on $n$, whereas in our case it depends. Also our Moran sets are different from those of Barreira (see \cite{Ba}), since in that case the sets are modeled by a symbolic system on a finite number of elements. 
In our case the Moran set $E(\omega)$ associated to some sequence $\omega = (\omega_1, \omega_2, \ldots)$ has infinitely many generators, and moreover the contraction rates at step $n$ depend on the truncated sequence $(\omega_1, \ldots, \omega_n)$, so the contraction rates are non-stationary (see \cite{Ba}, \cite{P} for non-stationarity). 

Next, in \textbf{Theorem \ref{real-an}} we prove that the Hausdorff dimensions of the non-stationary Moran fractals obtained above, depend \textbf{real-analytically} jointly on the frequencies, if  the contraction rates are fixed. This allows one to apply the powerful properties of real analytic functions in higher dimensions, such as the determination of the function by its derivatives at a point, the fact that the level sets are real analytic subvarieties,  the Lojaciewicz Vanishing Theorem, etc.

\

Then, our next direction  in Section 4, will be to apply the above construction in order to obtain a \textbf{family} of infinitely generated non-stationary self-similar fractal sets $E(x; f) \subset \mathbb R^d$, \textbf{associated to $f$-expansions} of  real numbers $x \in [0, 1)$.  \ 

The first example will be for the $m$-ary expansion, where $m \ge 2$ is an integer. 
It is known that for Lebesgue-almost all $x \in (0, 1)$ the frequencies of appearance of the digits $0, \ldots, m-1$ in the $m$-ary expansion of $x$, approach  the uniformly distributed probability vector $(\frac 1m, \ldots, \frac 1m)$; \ this follows from the Strong Law of Large Numbers for the independent identically distributed random variables given by the digits, see \cite{D} (or from the Ergodic Theorem for the Lebesgue measure, invariant for $T_m(x) = mx  \ \text{mod} \ 1$, see \cite{Wa}). We say  such numbers $x$ are \textit{normal} for the $m$-ary expansion and denote their set by $\mathcal{N}(m)$. 
However, there exist $x \in [0, 1)$ which are \textit{not normal} but for which the frequencies of digits converge to an arbitrary stochastic vector $(p_0, \ldots, p_{m-1})$; we call such numbers $(p_0, \ldots, p_{m-1})$-quasinormal and denote their set by $$F(p_0, \ldots, p_{m-1}):= \{ x = \mathop{\sum}\limits_{k \ge 1} \frac{d_k(x)}{m^k}, \ \frac{Card\{j, \ 1 \le j \le n, \ d_j(x) = i \}}{n} \mathop{\to}\limits_{n \to \infty} p_i, \  0 \le i \le m-1\}$$ 
We denote by $\mathcal{Q}_m$ the union of all sets $F(p_0, \ldots, p_{m-1})$ over all stochastic vectors $(p_0, \ldots, p_{m-1})$ (i.e vectors which satisfy $p_i > 0, 0 \le i \le m-1$ and $\mathop{\sum}\limits_{0 \le i \le m-1} p_i = 1$). The elements of $\mathcal{Q}_m$ will be called \textit{quasinormal numbers}, and clearly $\mathcal{N}(m) \subset \mathcal{Q}_m$. \ 
We obtain next a functional $E(\cdot; m)$ which assigns to every point $x \in \mathcal{Q}_m$, a Moran set $E(x; m) \subset \mathbb R^d$ with infinitely many generators and non-stationary contraction rates by the above procedure.
This gives a function describing the Hausdorff dimensions of these Moran sets, $H_m: \mathcal{Q}_m \to [0, \infty), \ H_m(x) := \text{dim}_\text{H}(E(x; m)), \ x \in \mathcal{Q}_m$.

Another construction of Moran sets $E(x; \beta) \subset \mathbb R^d$ with infinitely many generators and non-stationary contraction rates, can be done using the digits in the \textbf{beta-expansion}; in this case we have also finitely many possible values for the digits, namely $0, \ldots, [\beta]$. The Lebesgue measure is not $T$-invariant by the map $T(x) = \beta x \ \text{mod} \ 1$ anymore, but we can apply the Ergodic Theorem to the measure $\nu_\beta$ introduced by R\'enyi (\cite{Re}), which is $T$-invariant ergodic and absolutely continuous with respect to Lebesgue measure. Thus we obtain normal numbers for the $\beta$-expansion. \ 
As before we can define also  sets $F(p_0, \ldots, p_{[\beta]})$ and we will compute the Hausdorff dimension of $F(p_0, \ldots, p_{[\beta]})$ in Lemma \ref{dim-beta}.  

For the construction of $E(x; \beta) \subset \mathbb R^d$ we take finitely many infinite positive vectors $\Psi_i, 0 \le i \le [\beta] $ and, at each step $n$, if the $n$-th digit of $x$ in its $\beta$-expansion is $j \in \{0, \ldots, [\beta]\}$ then we use the contraction rates given by $\Psi_j$. \
We can introduce then a dimension function defined on the set $\mathcal{Q}_\beta$ of all \textit{quasinormal numbers} in $[0, 1)$ for the beta-expansion, given by $$H_\beta(x) := \text{dim}_\text{H}(E(x; \beta)), \ x \in \mathcal{Q}_\beta$$

We show in \textbf{Theorem \ref{thm2}} that, in spite of the apparent continuity of the process of construction of the infinitely generated Moran sets $E(x; m)$ and $E(x; \beta)$,  the functions $H_m, H_\beta$ are actually \textbf{highly discontinuous} on the set of quasinormal numbers.  We obtain  that $H_m$ and $H_\beta$  are constant on sets of positive dimension $F(p_0, \ldots, p_{m-1})$, respectively $F(p_0, \ldots, p_{[\beta]})$, and moreover that  given any $x \in \mathcal{Q}_m$, respectively any $x \in \mathcal{Q}_\beta$ and any neighbourhood $U$ of $x$, then the functions $H_m$ and $H_\beta$ take their \textit{entire} range of values already inside $U$.

In Corollary \ref{H-osc} we will give also an estimate of the oscillation of $H_m$ when the frequency vectors vary in some compact set $K \subset \mathbb R^m$, and show that we can have $H_m(x) = H_m(y)$ for two quasinormal numbers $x, y \in \mathcal{Q}_m$, if and only if the frequency vector $\eta(y)$ of $y$ belongs to a \textbf{real-analytic subvariety} $Z_x \subset \mathbb R^m$ which depends on $x$. Similar results hold also for the dimension function $H_\beta(\cdot)$ associated to the $\beta$-expansion.

\

We saw above, that the dimension of $E(x; m)$ is constant when  $ x \in F(p_0, \ldots, p_{m-1})$, provided that the infinite contraction vectors $\Psi_i$ are fixed. However, for a fixed stochastic vector $(p_0, \ldots, p_{m-1})$ and for numbers $x \in F(p_0, \ldots, p_{m-1})$, we introduce in subsection \textbf{\ref{tildeE}, another class} of Moran fractals $\tilde E_m(x)$, whose non-stationary \textbf{contraction vectors $\{\Psi_i(x)\}_{1 \le i \le m}$ depend on $x$}. Such fractals use the whole information about the digits of $x$ in its $m$-expansion  (not only their asymptotic frequencies $p_i$). In this case $\text{dim}_\text{H}(\tilde E_m(x))$ depends on $x$, even when $x$ varies in a set $F(p_0, \ldots, p_{m-1})$ and, in Corollary \ref{cortildeE} we obtain a formula for this dimension. 

In Remark \ref{remtilde} we show that the fractal construction $\tilde E_m(x)$ gives us a way to \textit{distinguish} between numbers $x \in [0, 1)$ which have the same fixed asymptotic frequency vector $(p_0, \ldots, p_{m-1})$. \ 
A similar kind of fractals $\tilde E_\beta(x)$ is constructed using $\beta$-expansions. This may be  done also for the Bolyai-R\'enyi expansion, and other expansions with finitely many digit values as in \ref{f-tilde}.

Then, in \textbf{Theorem \ref{continued}} we investigate non-stationary Moran fractals $E(x; G)$ associated to the \textbf{continued fraction expansion} of real numbers  $x \in [0, 1)$ (see for eg. \cite{DK}). This expansion is induced by the map $T(x) = \frac 1x - [\frac 1x], x \neq 0$, and  $x$ is represented as $x = \frac{1}{d_1(x) + \frac{1}{d_2(x) + \frac{1}{d_3(x) + \ldots}}}$. \ In this case we will need to extend our results to the case of infinitely many contraction rates at each step, i.e. we have an infinite collection of infinite positive vectors $\Psi_i, i \ge 1$ prescribing the non-stationary contraction rates. This is due to the fact that the digits of the continued fraction expansion may take infinitely many values.  \
In this case we shall use some results related to  the invariant absolutely continuous Gauss measure $\mu_G$, and to the dimensions of probability measures that make the digits of the continued fraction expansion to be independent random variables (see for eg. \cite{KP}). \
We study then similarly the dimension function $H_G: \mathcal{Q}_G \to [0, \infty)$ defined for quasinormal numbers $x$ in continued fraction expansion, by
$$H_G(x) := \text{dim}_\text{H}(E(x; G)), \ x \in \mathcal{Q}_G$$

In subsection \textbf{\ref{f-exp}} our results will be applied also to a family of infinitely generated Moran fractals $E(x; f) \subset \mathbb R^d$ constructed with the help of \textbf{other $f$-expansions}, for strictly monotonic functions $f$  (see \cite{A}, \cite{KP}, \cite{Re}, \cite{Wa1}, etc). \
By employing the Perron-Frobenius operator, Walters obtained in \cite{Wa1} a probability measure $\mu_T$ which is absolutely continuous and $T$-invariant, for the map $T(x) = f^{-1}(x) - [f^{-1}(x)], x \in [0, 1)$.
We use also some properties of invariant measures obtained as distributions of random variables associated to $f$-expansions. And, especially we use properties of \textit{equilibrium measures $\mu_\phi$}  associated to $T$  and to piecewise H\"older-continuous potentials $\phi$, in order to control the size of sets of numbers $x \in [0, 1)$ for which we obtain certain dimensions of the fractals $E(x; f)$.

In the end, in subsection \textbf{\ref{f-tilde}} we will study also \textbf{another class} of non-stationary fractals $\tilde E_f(x)$ associated to $\eta$-quasinormal numbers $x$ for the $f$-expansion; we  use stochastic vectors $\eta$ determined by $T$-invariant equilibrium measures $\mu_\phi$ of piecewise H\"older potentials $\phi$.  \
For these fractals in \ref{f-tilde}, the infinite contraction vectors $\Psi_i(x), i \ge 1$ will depend on \textbf{all the digits} of $x$.
This will help to relate some \textbf{properties of the digits} of $x$, like the \textit{speed of convergence} of their relative frequencies towards asymptotic frequencies,  with the behavior of the dimensions of the associated fractals $\tilde E_f(x)$. \ 

 In particular, we obtain results vis-\'a-vis the behavior of $\tilde E_f(x)$ and of the $f$-representation of $x$, with respect to the unique absolutely continuous $T$-invariant probability $\mu_T$.  These results can then be applied to the continued fraction expansion with its Gauss measure $\mu_G$, and its associated family of equilibrium measures on $[0, 1]$.

\section{Construction of the class of Moran sets, and the pressure function.}

Let $\D R^d$ denote the $d$-dimensional Euclidean space.
Let $s\geq 0$, and let $\C U=\set{U_i}$ be a countable collection of sets of $\D R^d$. We define
$\|\C U\|^s : =\sum_{U_i\in \C U} |U_i|^s,$ \ 
where $|A|$ denotes the diameter of a set $A$. Let $E\sci \D R^d$ and $\gd>0$. A countable collection $\C U=\set{U_i}\sci \D R^d$ is called a $\gd$-covering of the set $E$ if $E \sci \UU U_i$ and for each $i$, $0<|U_i|\leq \gd$. Suppose that $E\sci \D R^d$ and $s \geq 0, \delta>0$, then the $s$-dimensional Hausdorff measure of $E$ is defined by:
\[\C H_\gd^s(E) =\inf_{\C U}\left\{\|\C U\|^s : \C U \te{ is a $\gd$-covering of } E\right\}, \ \text{and} \ \C H^s(E)=\lim_{\gd \to 0} \C H_\gd^s(E).\]
 The Hausdorff dimension of $E$, denoted by dim$_{\te{H}}E$, is then defined by
\[\te{dim}_{\te{H}}E=\sup\set{s \geq 0 : \C H^s(E)=\infty} =\inf\set{s \geq 0 : \C H^s(E)=0}.\]

The following proposition is known.

\begin{prop} (\cite{F}, \cite{Ma}) \label{prop22}
Let $E\sci \D R^d$ be a Borel set,  $\mu$ be a finite Borel measure on $\D R^d$ and $0<c<\infty$. If $\mathop{\limsup}\limits_{r\to 0} \mu(B(x, r))/r^s \leq c$ for all $x \in E$, then $\C H^s(E) \geq \mu(E)/c$.
\end{prop}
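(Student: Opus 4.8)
The plan is to prove this by the standard mass-distribution (lower-bound) argument: I will show that every $\delta$-cover $\{U_i\}$ of $E$ has $s$-sum essentially bounded below by $\mu(E)/c$, so that passing to the infimum over covers and then letting $\delta \to 0$ gives the stated inequality. The hypothesis controls $\mu(B(x,r))$ for every $x \in E$ at all sufficiently small scales $r$; the only real complication is that the threshold below which $\mu(B(x,r)) \le (c+\ep)r^s$ holds depends on $x$, so this bound is not uniform over $E$. I will get around this non-uniformity by a stratification of $E$ combined with continuity of $\mu$ from below.

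Concretely, fix $\ep>0$ and for each integer $k \ge 1$ put
\[E_k := \{\, x \in E : \mu(B(x,r)) \le (c+\ep)\,r^s \ \te{ for all } \ 0<r\le 1/k \,\}.\]
Because $\limsup_{r\to 0}\mu(B(x,r))/r^s \le c$ for every $x\in E$, each $x$ belongs to some $E_k$, and the sets $E_k$ increase to $E$. Fix $k$ and let $\{U_i\}$ be any $\delta$-cover of $E$ with $\delta < 1/k$. Discarding those $U_i$ that do not meet $E_k$, choose a point $x_i \in U_i \ii E_k$ in each of the remaining ones. Since every point of $U_i$ lies within $\te{diam}(U_i)=|U_i|\le \delta < 1/k$ of $x_i$, we have $U_i \ci B(x_i,|U_i|)$, and the defining property of $E_k$ yields $\mu(U_i) \le \mu(B(x_i,|U_i|)) \le (c+\ep)|U_i|^s$. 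Summing and using monotonicity and countable subadditivity of $\mu$,
\[\mu(E_k) \le \mu\Big(\UU_i U_i\Big) \le \sum_i \mu(U_i) \le (c+\ep)\sum_i |U_i|^s,\]
so that $\sum_i |U_i|^s \ge \mu(E_k)/(c+\ep)$ for every such cover.

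Taking the infimum over $\delta$-covers gives $\C H^s_\delta(E) \ge \mu(E_k)/(c+\ep)$, and letting $\delta \to 0$ gives $\C H^s(E) \ge \mu(E_k)/(c+\ep)$. Since $\mu$ is finite and $E_k \uparrow E$, continuity from below yields $\mu(E_k)\to\mu(E)$, hence $\C H^s(E)\ge \mu(E)/(c+\ep)$; finally letting $\ep \to 0$ completes the proof. The main (indeed only) obstacle is the non-uniformity of the $\limsup$ hypothesis, which the stratification $\{E_k\}$ and the continuity of the finite measure $\mu$ are precisely designed to resolve; the containment $U_i\ci B(x_i,|U_i|)$ and the subadditivity estimate are routine. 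A minor technical care point is the $\mu$-measurability of $E_k$ needed for continuity from below, which one secures by reducing the uncountable condition to rational radii $r$ (using monotonicity of $r\mapsto\mu(B(x,r))$) together with Borel regularity of $\mu$.
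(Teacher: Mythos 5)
The paper does not actually prove this proposition; it is quoted as a known result with citations to Falconer and Mattila, so there is no internal proof to compare against. Your argument is exactly the standard proof from those references --- stratify $E$ into the Borel sets $E_k$ according to the scale at which the density bound $\mu(B(x,r))\le (c+\ep)r^s$ becomes effective, dominate each cover element by the measure of a ball centred at one of its points, and pass to the limit using continuity from below of the finite measure --- and it is correct, including the reduction to rational radii for measurability of $E_k$; the only cosmetic point is that $U_i$ lies in the \emph{closed} ball of radius $|U_i|$ about $x_i$, which (if $B(x,r)$ denotes the open ball) one absorbs by enlarging the radius to $(1+\tau)|U_i|$ and letting $\tau\to 0$, exactly as you already handle $\ep$.
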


In general, a family of similarities $\set{S_n}_{n\geq 1}$ is said to satisfy the Open Set Condition  if there exists a nonempty open set $U \sci X$ (in the topology of $X$) such that $S_i(U) \sci U, i \geq 1$ and $S_i(U) \ii S_j(U)=\es$ for all $i, j\geq 1$ with $i\neq j$ (\cite{MU}). 
For self-similar sets of type $E = \mathop{\cup}\limits_{(i_1, i_2, \ldots) \in \mathbb N^\infty} \mathop{\cap}\limits_{k \ge 1} S_{i_1} \circ \ldots \circ S_{i_k}(X)$ with similarities $S_i$ of contraction rate $c_i$, which satisfy the Open Set Condition, it was proved in \cite{MU} and \cite{M} that $\text{dim}_\text{H}(E)$ is given as $s = \inf \{t \ge 0, \ \mathop{\sum}\limits_{i \ge 1} c_i^t \le 1\}$.  \
In \cite{Mi}, \cite{Mi1}, \cite{FM},  relations between fractal dimensions, equilibrium measures on fractal sets and iterated schemes were also studied in certain settings. 

We will now detail our fractal construction:

\subsection{Infinitely generated Moran set with non-stationary contraction rates:} Let $\set{\Phi_k}_{k\geq 1}$  be a sequence of positive vectors in the infinite dimensional space $\mathbb R^{\mathbb N^*}$ such that for all $k\geq 1$,
\[\Phi_k=(c_{k1}, c_{k2}, \cdots ), \ \ c_{kj} \ge 0, k, j \ge 1 \ \text{satisfying} \] \[\sum_{j=1}^{\infty} c_{kj} < \infty,  \te{ and } \sup\set{c_{kj} : j \in \D N}<1\]

Let $D_0$ be the empty set. For $k\geq 1$ write
\[D_{m, k}=\set{(i_m, i_{m+1}, \cdots, i_k) : i_j \in \D N, \, m\leq j\leq k},\]
and $D_k=D_{1, k}$, $D_\infty=\lim_{k\to \infty} D_k$. Define $D:=\uu_{k\geq 0} D_k$. Elements of $D$ are called words. For any $\gs \in D$ if $\gs=(\gs_1, \gs_2, \cdots, \gs_n)  \in D_n$, we write $\gs^-=(\gs_1, \gs_2, \cdots, \gs_{n-1})$ to denote the word obtained by deleting the last letter of $\gs$,  $|\gs|=n$ to denote the length of $\gs$, and $\gs|_k:=(\gs_1, \gs_2, \cdots, \gs_k)$, $k\leq n$, to denote the truncation of $\gs$ to the length $k$.  If  $\gs=(\gs_1, \gs_2, \cdots, \gs_k) \in D_k$ and $\gt=(\gt_1, \gt_2, \cdots, \gt_m) \in D_{k+1, m}$, then we write $\gs\gt=\gs\ast \gt=(\gs_1,  \cdots, \gs_k, \gt_1,  \cdots, \gt_m)$ to denote the juxtaposition of $\gs, \gt \in D$.   For $\gs \in D$ and $\gt\in D\uu D_\infty$ we say $\gt$ is an extension of $\gs$, written as $\gs\prec \gt$, if $\gt|_{|\gs|}=\gs$.

Let $J $ be a nonempty compact subset of $\mathbb R^d$ such that $J=\te{cl(int} J)$, where $\te{int}(A)$ denotes the interior of a set $A$, and  let $\F F=\set{J_\gs : \gs \in D}$ be an infinite collection of nonempty subsets of $\D R^d$. We say that $\F F$ fulfills the conditions for the \textbf{infinite Moran structure with non-stationary contraction rates} (\textbf{IMSNC}), if it satisfies the following:

$1)$ $J_\es=J$.

$2)$ For all $\gs \in D$, $J_\gs$ is similar to $J$, i.e there is a similarity $S_\gs: \D R^d\to \D R^d$ with $J_\gs=S_\gs(J)$.

$3)$ For any $k\geq 0$ and $\gs \in D_k$, $J_{\gs\ast1}, J_{\gs\ast 2}, \cdots$ are subsets of $J_\gs$, and $\te{int}(J_{\gs\ast i})\ii \te{int}(J_{\gs\ast j})=\es$ for $i, j\geq 1$ and $i\neq j$.

$4)$ For any $k\geq 1$ and $\gs \in D_{k-1}$, $j\geq 1$, $\frac{|J_{\gs\ast j}|}{|J_\gs|}=c_{kj}$.

\begin{remark}
Since $D_k$ is infinite for $k\geq 1$, the set $\uu_{\gs\in D_k} J_\gs$ may not be closed. 
\end{remark}
The nonempty Borel set $E:=E(\F F)$ defined by
 $$E = \UU\limits_{\sigma \in D_\infty}\II\limits_{1 \le i < \infty} J_{\sigma|_i}$$
 is called the Moran set (or Moran fractal) associated with the collection $\F F$. As $\F F$ satisfies the infinite Moran structure conditions, we call $E$ an \tit{infinitely generated Moran set with non-stationary contraction coefficients}, or simply \textit{an infinitely generated non-stationary Moran fractal}. \ Let us mention that a similar kind of non-stationary construction as above was done in the finitely generated case in \cite{HRW}, \cite{We}. 

Let now $\F F_k=\set{J_\gs : \gs\in D_k}$, and $\F F=\uu_{k\geq 0} \F F_k$. The elements of $\F F_k$ are called the basic sets of order $k$, and the elements of $\F F$ are called the basic elements of the Moran set $E$. 
 \begin{remark}
 If $\lim_{k\to \infty} \sup_{\gs \in D_k} |J_\gs|>0$, then $E$ contains interior points. Then the measure and dimension properties will be trivial. We assume therefore
 $ \lim_{k\to \infty} \sup_{\gs \in D_k} |J_\gs|=0$.
 \end{remark}
 By the definition of the Moran set $E$ associated with $\F F$, we see that $\F F$ is a net of $E$, i.e., for any $x \in E$ and any $\ep>0$, there is $G \in \F F$ such that $x \in G$ and $|G|<\ep$. Two basic elements are said to be disjoint if their interiors are disjoint. 

Suppose that the set $J$ and the collection $\set{\Phi_k}_{k\geq 1}$ are given; we denote by $\C M(J, \D N, \set {\Phi_k})$ the class of Moran sets satisfying the IMSNC. We call $\C M(J, \D N, \set {\Phi_k})$ the Moran class associated with the triplet $(J, \D N, \set {\Phi_k})$; this class is obtained by considering all the possible similitudes $S_\sigma, \sigma \in D,$ which satisfy the conditions IMSNC above.

Next, without loss of generality we will assume that the diameter of the set $J$ is one. \
Let now $A:=\set{a_1, a_2, \cdots, a_m}$ a finite set of positive integers, and define the set of sequences
\[A^{\D N}:=\set{(t_j)_{j=1}^\infty \in D_\infty : t_j\in A}\]
Consider $\go=(s_1, s_2,  \cdots) \in A^{\D N}$, let an integer $k \ge 1$ and $\go|_k=(s_1, s_2, \cdots, s_k)$ and  define the number of times that $a_i$ appears in the truncated word $\omega|_k$ by  \ $$\|\go|_k\|_{a_i}:= Card \{j, \ s_j=a_i, \ 1 \le j \le k\}$$ 

In addition assume that the sequence $\go\in A^{\D N}$ satisfies: \ $\lim_{k\to \infty} \frac{\|\go|_k\|_{a_i}}{k} =\gh_i>0,$ \ for every $a_i\in A, 1 \le i \le m$;  in this case we say that  $\go$ has the \textit{frequency vector} $\gh=(\gh_1, \gh_2, \cdots, \gh_m)$, where $\mathop{\sum}\limits_{i = 1}^m \eta_i = 1$ (i.e $\eta$ is a \textit{stochastic vector}). \
 Let $T$ be the left shift on $A^{\D N}$, i.e., $T(\go)=(s_2, s_3, \cdots)$ where $\go=(s_1, s_2, \cdots) \in A^{\D N}$. Note that $\sum_{i=1}^m\|\go|_k\|_{a_i}=k$. For a stochastic vector $\gh=(\gh_1, \gh_2, \cdots, \gh_m) \in (0, 1)^m$, define the set of sequences:
\[A_\gh^{\D N}:=\set {\rho=(\rho_j)_{j\geq 1} : \rho_j\in A, \, \lim_{k\to \infty} \frac{\|\rho|_k\|_{a_i}}{k} =\gh_i, \, 1\leq i\leq m}\]

Let us now fix $\Psi_i=(\gamma_{i1}, \gamma_{i2}, \cdots), 1 \le i \le m$  infinite positive vectors satisfying the condition:
\begin{equation}\label{gamma}
\sum_{j=1}^{\infty}\gamma_{ij} < \infty, 1 \le i \le m, \ \text{and} \ 
\sup\set{\gamma_{ij},   1 \le i \le m, j \ge 1}<1
\end{equation}
  If $\go =(s_1, s_2, \cdots)\in A_{\gh}^{\D N}$, then for $k \ge 1$ arbitrary, if it happens that  the element $s_k$ of $\omega$ is equal to  some $a_i \in A$, then we define the infinite positive vector $$\Phi_k = (c_{k1}, c_{k2}, \ldots):= \Psi_i = (\gamma_{i1}, \gamma_{i2}, \cdots )$$

In this way, we obtain a class of Moran sets associated with $\go \in A_\gh^{\D N}$, by taking all possible similitudes that satisfy IMSNC.  Denote such a generic Moran set by 
\begin{equation}\label{def}
E:=E(\go),
\end{equation}
 and call it a \textbf{Moran set with infinitely many generators and non-stationary rates} associated to $\{\Psi_i\}_{1 \le i \le m}$, to the stochastic vector $\eta$ and to the sequence $\omega \in A^\mathbb N_\eta$.  This class of fractals, the measures supported on them and the pressure functionals, together with relations to $f$-expansions and associated random variables, will be the object of study in the sequel.
Later in the paper we extend the construction to the case of countably many vectors $\Psi_i$. 

Let now
$C=\sup\set{\gamma_{ij} : j\in \D N, \, 1\leq i\leq m}$. By our assumption,  $0<C<1$. Let us define
\[\gq_i=\inf \set{t >0 : \sum_{j=1}^\infty \gamma_{ij}^t<\infty }, 1 \le i \le m, \te{ and } \gq=\max\set{\gq_i : 1\leq i\leq m}\]
Note that $\gq_i\geq 0, \ 1\leq i\leq m$, and thus only one of the following two disjoint cases can occur:

$(C1)$ $\sum_{j=1}^\infty \gamma_{ij}^\gq =\infty$ for some $i$;

$(C2)$ $\sum_{j=1}^\infty \gamma_{ij}^\gq <\infty$ for all $i$.

Let us assume that in our paper $(C1)$ happens, and then $\sum_{j=1}^\infty \gamma_{ij}^t<\infty$ for all $t\in (\gq, +\infty)$ and for all $1\leq i\leq m$. If $(C2)$ happens, then $\sum_{j=1}^\infty \gamma_{ij}^t<\infty$ for all $t\in [\gq, +\infty)$ and $1\leq i\leq m$.

\subsection{Pressure function.} Recall that we fixed an element $\omega \in A_\eta^{\mathbb N}$, and a sequence of infinite vectors $\set{\Phi_k}_{k \ge 1}$, with  $\Phi_k = (c_{k1}, c_{k2}, \ldots), k \ge 1$ defined in terms of $\omega$ and of $\Psi_1, \ldots, \Psi_m$. For an arbitrary $\gs=(\gs_1, \gs_2, \cdots, \gs_k) \in D_k$, let us define
\begin{align*}
c_\gs=\left\{\begin{array} {ll} c_{1\gs_1} c_{2\gs_2} \cdots c_{k\gs_k} & \te{ if } k\geq 1\\
1 &\te{ if } k=0.
\end{array} \right.
\end{align*}
Let $t\in (\gq, +\infty)$ be given, and define the sequence
\[c_k(t)= \frac 1 k \log \sum_{\gs\in D_k}c_\gs^t=\frac 1 k \log \prod_{i=1}^m\left(\sum_{j=1}^{\infty} \gamma_{ij}^t\right)^{\|\go|_k\|_{a_i}}= \sum_{i=1}^m \frac {\|\go|_k\|_{a_i}}{k}\log \sum_{j=1}^{\infty}  \gamma_{ij}^t,\]
for $k\in \D N$.
Let us denote by $L(t)=\sum_{i=1}^m\left |\log \sum_{j=1}^{\infty} \gamma_{ij}^t\right |$. Note that $0<\sum_{j=1}^{\infty} \gamma_{ij}^t <\infty$ for all $t \in (\gq, +\infty)$ and for all $1\leq i \leq m$. Thus $L(t)$ is a positive number for any $t\in (\gq, +\infty)$. \

We claim that $\set{c_k(t)}_{k=1}^\infty$ is a Cauchy sequence.
For all $1\leq i\leq m$,  the sequence $\left\{\frac{\|\go|_k\|_{a_i}}{k}\right\}_{k=1}^\infty$ is convergent, and so for any $\varepsilon>0$ there exists a positive integer $N$ such that for all $n, p\geq N$ and for all $1\leq i\leq m$,
$\left|\frac{\|\go|_n\|_{a_i}}{n}-\frac{\|\go|_p\|_{a_i}}{p}\right|<\frac{\varepsilon}{L(t)}$.
Using this fact, and noting that
\begin{align*}
c_n(t)-c_p(t) =\sum_{i=1}^m\left (\frac{\|\go|_n\|_{a_i}}{n}- \frac{\|\go|_p\|_{a_i}}{p}\right) \log \sum_{j=1}^{\infty} \gamma_{ij}^t,
\end{align*}
for all $n, p\geq N$, we then obtain:
\[\left|c_n(t)-c_p(t)\right| \leq \sum_{i=1}^m\left|\frac{\|\go|_n\|_{a_i}}{n}- \frac{\|\go|_p\|_{a_i}}{p}\right| \left|\log \sum_{j=1}^{\infty} \gamma_{ij}^t\right|< \frac{\varepsilon}{L(t)} \sum_{i=1}^m \left|\log \sum_{j=1}^{\infty} \gamma_{ij}^t\right| <\varepsilon,\]
and thus the claim is true. Notice also that from above, the sequence $\set{c_k(t)}_{k=1}^\infty$ is bounded; so $\set{c_k(t)}_{k=1}^\infty$ is a convergent sequence.
Hence we can define a real-valued function $P(t)$ as:
\begin{equation} \label{eq1} P(t)=\lim_{k\to \infty}c_k(t)=\lim_{k\to\infty}\sum_{i=1}^{m} \frac {\|\go|_k\|_{a_i}}{k}\log \sum_{j=1}^{\infty}  \gamma_{ij}^t=\sum_{i=1}^m\gh_i\log \sum_{j=1}^{\infty} \gamma_{ij}^t,\end{equation}
where $t \in (\gq, +\infty)$. \ 
The above function $P(t)$ will be called the \tit{pressure function} corresponding to the Moran set $E:=E(\go)$, by analogy to the usual topological pressure for continuous functions \cite{Wa}.
The pressure function associated to  $E(\omega)$ depends only on the contraction rates $c_{kj}, k, j \ge 1$, which in turn depend on $\omega$ and $\Psi_1, \ldots, \Psi_m$. \

We will assume also that there exists some $u\in (\gq, +\infty)$ with $0<P(u)<\infty$.
The following two lemmas yield some properties of the function $P(t)$, similar to the classical notion of pressure for a dynamical system (\cite{Bo}).

\begin{lemma} \label{lemma1}
The function $P(t)$ is strictly decreasing, convex and continuous on $(\gq, +\infty)$.

\end{lemma}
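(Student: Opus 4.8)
The plan is to reduce everything to the auxiliary functions $g_i(t) := \sum_{j=1}^\infty \gamma_{ij}^t$, $1 \le i \le m$, since by (\ref{eq1}) we have $P(t) = \sum_{i=1}^m \gh_i \log g_i(t)$ with all weights $\gh_i > 0$. Because we are in case $(C1)$, each $g_i(t)$ is finite and strictly positive for every $t \in (\gq, +\infty)$, so $\log g_i(t)$ is well defined there. Since a positive linear combination of strictly decreasing (respectively convex, respectively continuous) functions inherits the same property, it suffices to establish that each $\log g_i$ is strictly decreasing, convex and continuous on $(\gq, +\infty)$, and then sum with the weights $\gh_i$.

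For monotonicity, I would note that $0 < \gamma_{ij} \le C < 1$ for all $i, j$, so each term $\gamma_{ij}^t = e^{t \log \gamma_{ij}}$ is a strictly decreasing function of $t$, as $\log \gamma_{ij} < 0$. Hence $g_i$ is strictly decreasing, and so is $\log g_i$; taking the $\gh_i$-weighted sum gives that $P$ is strictly decreasing. For convexity, the key device is H\"older's inequality. Fix $s, t \in (\gq, +\infty)$ and $\gl \in (0,1)$, and write $\gamma_{ij}^{\gl s + (1-\gl)t} = (\gamma_{ij}^s)^\gl (\gamma_{ij}^t)^{1-\gl}$. Applying H\"older with conjugate exponents $1/\gl$ and $1/(1-\gl)$ to the convergent positive series yields
$$g_i(\gl s + (1-\gl)t) \le g_i(s)^\gl \, g_i(t)^{1-\gl},$$
and taking logarithms gives $\log g_i(\gl s + (1-\gl)t) \le \gl \log g_i(s) + (1-\gl)\log g_i(t)$, i.e. $\log g_i$ is convex. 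Summing against the $\gh_i$ shows $P$ is convex.

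Continuity is then automatic, since a convex function on the open interval $(\gq, +\infty)$ is continuous there; alternatively, continuity and positivity of each $g_i$ give continuity of $\log g_i$ directly. As an equivalent route to monotonicity and convexity, one may instead differentiate $g_i$ twice under the summation sign, obtaining $(\log g_i)'(t) = g_i'(t)/g_i(t) < 0$ and, via the Cauchy--Schwarz bound $g_i'(t)^2 \le g_i(t)\,g_i''(t)$, the inequality $(\log g_i)'' \ge 0$.

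The main obstacle, and essentially the only point requiring genuine care, is the justification of the analytic manipulations of the infinite series on $(\gq, +\infty)$: one must verify that $g_i(t)$ is finite there (guaranteed by $(C1)$ together with the definition of $\gq$), that the H\"older step is legitimate for series with infinitely many positive terms, and---if one prefers the differentiation route---that the termwise-differentiated series $\sum_j \gamma_{ij}^t \log \gamma_{ij}$ and $\sum_j \gamma_{ij}^t (\log \gamma_{ij})^2$ converge and may be summed term by term. For the latter I would fix $t' \in (\gq, t)$ and use the elementary bound $\gamma_{ij}^{\,t - t'}|\log \gamma_{ij}|^k \to 0$ as $\gamma_{ij} \to 0^+$ to dominate these series by a constant multiple of the convergent series $\sum_j \gamma_{ij}^{t'}$, yielding locally uniform convergence on compact subsets of $(\gq, +\infty)$ and thereby legitimizing term-by-term differentiation.
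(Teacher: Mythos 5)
Your proof is correct and follows essentially the same route as the paper: convexity comes from H\"older's inequality applied to the series $\sum_{j}\gamma_{ij}^{t}$, and strict monotonicity from the decay of the individual terms (the paper phrases this via the quantitative bound $\gamma_{ij}^{t+\gd}\le \gamma_{ij}^{t}C^{\gd}$, giving $P(t+\gd)\le P(t)+\gd\log C$, which it then also reuses for continuity). Your observation that continuity is automatic for a convex function on an open interval is a clean equivalent shortcut, and the justifications you flag (finiteness of the series on $(\gq,+\infty)$ under $(C1)$ and the validity of H\"older for infinite positive series) are exactly the points that need checking.
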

\begin{proof}  To prove that $P(t)$ is strictly decreasing let $\gd>0$. Then,
$$
P(t+\gd)=\sum_{i=1}^m \gh_i \log\sum_{j=1}^{\infty} \gamma_{ij}^{t+\gd}\leq \sum_{i=1}^m \gh_i \log\sum_{j=1}^{\infty} \gamma_{ij}^{t} C^\gd
=P(t) +\gd\log C<P(t),
$$
as $\gd \log C<0$ and so $P(t)$ is strictly decreasing. For $t_1, t_2 \in (\gq, +\infty)$ and  $a_1, a_2 > 0$ with $a_1+a_2=1$, using H\"older's inequality, we have
\begin{align*}
P( a_1 t_1+a_2t_2)&= \sum_{i=1}^m \gh_i \log\sum_{j=1}^{\infty}  \gamma_{ij}^{a_1 t_1+a_2 t_2}= \sum_{i=1}^m \gh_i \log\sum_{j=1}^{\infty}\gamma_{ij}^{a_1t_1}
 \gamma_{ij}^{a_2t_2}\\
&\leq \sum_{i=1}^m \gh_i \log \Big(\sum_{j=1}^\infty  \gamma_{ij}^{t_1}\Big)^{a_1}\Big( \sum_{j=1}^\infty   \gamma_{ij}^{t_2}\Big)^{a_2}=a_1P(t_1)+a_2 P(t_2),
\end{align*}
i.e $P(t)$ is convex on $(\gq, +\infty)$. Combined with the fact proved above that for small $\delta>0$,  $P(t+\delta) \le P(t) + \delta \log C$,  we obtain then that $P(t)$ is continuous on $(\theta, \infty)$.

\end{proof}

\begin{lemma} \label{lemma11}
There exists a unique $h\in \D R$ such that $P(h)=0$. In addition $h\in (\gq, +\infty)$.
\end{lemma}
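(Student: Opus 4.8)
The plan is to obtain uniqueness from the strict monotonicity in Lemma~\ref{lemma1} and existence from the Intermediate Value Theorem, after locating one point where $P$ is positive and one where it is negative. Since $P$ is strictly decreasing on $(\theta,+\infty)$ it is injective there, so it can vanish at most once; this settles uniqueness at once. Moreover $P$ is defined only on $(\theta,+\infty)$, so any zero automatically lies in that interval, which will yield the second assertion $h\in(\theta,+\infty)$ for free.

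For existence I would pin down a sign change. By the standing hypothesis there is some $u\in(\theta,+\infty)$ with $P(u)>0$, which supplies the positive value. For a negative value I claim that $P(t)\to-\infty$ as $t\to+\infty$, and establishing this asymptotic is the only genuine computation and the step I expect to be the main obstacle. The idea is to exploit the uniform bound $\gamma_{ij}\le C<1$: for $t>u$ I factor $\gamma_{ij}^{t}=\gamma_{ij}^{u}\,\gamma_{ij}^{t-u}\le C^{\,t-u}\gamma_{ij}^{u}$, whence
\[
\sum_{j=1}^{\infty}\gamma_{ij}^{t}\le C^{\,t-u}\sum_{j=1}^{\infty}\gamma_{ij}^{u},
\]
each inner sum being finite since in case $(C1)$ we have $\sum_{j}\gamma_{ij}^{s}<\infty$ for every $s>\theta$. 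Taking logarithms, multiplying by $\eta_i$, and summing over $1\le i\le m$ yields the clean bound $P(t)\le (t-u)\log C+P(u)$; as $\log C<0$ the right-hand side tends to $-\infty$, so some $t_1>u$ satisfies $P(t_1)<0$.

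It then remains to invoke continuity from Lemma~\ref{lemma1} on the interval $[u,t_1]\subset(\theta,+\infty)$: since $P(u)>0>P(t_1)$, the Intermediate Value Theorem produces $h\in(u,t_1)$ with $P(h)=0$, and $h>u>\theta$ gives $h\in(\theta,+\infty)$. Combined with the uniqueness already noted, this finishes the argument. Every ingredient except the asymptotic estimate is immediate from Lemma~\ref{lemma1} and the hypothesis that $P$ takes a positive value somewhere in its domain, so I expect no real difficulty beyond controlling the tail sums and their decay as $t\to+\infty$.
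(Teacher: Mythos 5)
Your proposal is correct and follows essentially the same route as the paper: uniqueness from the strict monotonicity of $P$, and existence via the Intermediate Value Theorem after establishing $P(t)\le P(u)+(t-u)\log C\to-\infty$ by the identical factorization $\gamma_{ij}^{t}=\gamma_{ij}^{u}\gamma_{ij}^{t-u}\le C^{t-u}\gamma_{ij}^{u}$. No gaps.
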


\begin{proof}
By Lemma~\ref{lemma1}, the function $P(t)$ is strictly decreasing and continuous on $(\gq, +\infty)$. Since $0<P(u)<\infty$ for some $u\in (\gq, +\infty)$, in order to conclude the proof it therefore suffices to show that $\lim_{t \to +\infty} P(t)=-\infty$. For $t> u$,
$$
P(t)=\sum_{i=1}^m \gh_i \log\sum_{j=1}^{\infty}\gamma_{ij}^t=\sum_{i=1}^m \gh_i \log\sum_{j=1}^{\infty}\gamma_{ij}^u \gamma_{ij}^{t-u}
\leq \sum_{i=1}^m \gh_i \log\sum_{j=1}^{\infty}\gamma_{ij}^u C^{t-u}=P(u)+(t-u) \log C
$$
Since $C<1$, it follows that $\lim_{t\to+\infty}P(t)=-\infty$, and thus the lemma follows.

\end{proof}

\section{Hausdorff and probability measures on fractals of type $E(\omega)$. Real-analytic dependence of dimensions on frequencies.}

In this section we show that the Hausdorff dimension of the infinitely generated non-stationary Moran set $E(\omega) \subset \mathbb R^d$ obtained above, is equal to the unique zero of the pressure functional. Similar results, but for different types of fractals, have been obtained in a variety of settings (see for example \cite{Ba}, \cite{M}, \cite{P}, etc.)  We will study Hausdorff measures on $E(\omega)$ by constructing certain probability measures supported on it. 
In our case, the relationship between the unique zero $h$ of the pressure function $P(t)$, and the Hausdorff dimension $\te{dim}_{\te{H}}(E)$ is given in Theorem \ref{theorem}.

Then, in Theorem \ref{real-an} we will prove that the Hausdorff dimension of the Moran fractal $E(\omega)$ with $\omega \in A^\mathbb N_{\eta}$, \textbf{depends real analytically on the frequencies} $\eta = (\eta_1, \ldots, \eta_m)$, if the contraction rates are fixed. \ 
Let us give now the dimension formula:

\begin{theorem}\label{theorem}
Let $E:=E(\go) \in \C M(J, \D N, \set{\Phi_k})$ for $\go\in A_\gh^{\D N}$, and consider $h\in (\gq, +\infty)$ to be the unique number which satisfies the equation $P(h)=0$. Then
 \[\te{dim}_{\te{H}}(E)=h, \te{ and } \C H^h(E)<\infty\]
Moreover if $h = d$, then it follows that $\C H^d(E)>0$.
\end{theorem}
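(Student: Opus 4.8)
The plan is to establish the two inequalities $\dim_{\te{H}}(E) \le h$ and $\dim_{\te{H}}(E) \ge h$ separately, with the accompanying measure bounds $\C H^h(E) < \infty$ coming for free from the upper-bound construction and $\C H^h(E) > 0$ (in the case $h=d$) from the lower-bound construction. The natural object to build is a probability measure $\mu$ supported on $E$, assigning to each basic set $J_\gs$ with $\gs \in D_k$ the mass determined by the normalized $h$-dimensional weights. Concretely, I would define a cylinder measure by $\mu(J_\gs) = c_\gs^{\,h} / \big(\sum_{\tau \in D_k} c_\tau^{\,h}\big)$ for $\gs \in D_k$, or more symmetrically a product-type mass $\mu(J_\gs) = \prod_{l=1}^k \big(\gamma_{i_l j_l}^{\,h}/\sum_{j} \gamma_{i_l j}^{\,h}\big)$ where $i_l$ records which letter $\go$ reads at step $l$; since $P(h)=0$ forces $\sum_{\gs \in D_k} c_\gs^{\,h} = \prod_{i=1}^m \big(\sum_j \gamma_{ij}^{\,h}\big)^{\|\go|_k\|_{a_i}} \to e^{kP(h)} = 1$ asymptotically, these two normalizations agree up to subexponential factors, and consistency across levels holds because the weights at each level sum to one. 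Kolmogorov extension then yields a Borel probability measure on $E$.

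For the upper bound $\dim_{\te{H}}(E) \le h$ together with $\C H^h(E) < \infty$, I would cover $E$ at level $k$ by the basic sets $\{J_\gs : \gs \in D_k\}$, whose diameters are $|J_\gs| = c_\gs \, |J| = c_\gs$ (recalling $|J|=1$), and compute
\begin{equation*}
\sum_{\gs \in D_k} |J_\gs|^h = \sum_{\gs \in D_k} c_\gs^{\,h} = \prod_{i=1}^m \Big(\sum_{j=1}^\infty \gamma_{ij}^{\,h}\Big)^{\|\go|_k\|_{a_i}} = e^{\,k\,c_k(h)}.
\end{equation*}
Since $c_k(h) \to P(h) = 0$, the right-hand side stays bounded (in fact I would check it converges), and because $\sup_{\gs \in D_k}|J_\gs| \to 0$ this exhibits $\C H^h(E) < \infty$, hence $\dim_{\te{H}}(E) \le h$. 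The mild technical point is that $c_k(h) \to 0$ does not by itself bound $e^{kc_k(h)}$; I would use the Cauchy estimate already proved for $\{c_k(t)\}$, or monotonicity of $\sum_{\gs}c_\gs^t$ under refinement, to control the partial products uniformly.

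For the lower bound $\dim_{\te{H}}(E) \ge h$, the strategy is a mass-distribution argument via Proposition \ref{prop22}: using the measure $\mu$ above, I would estimate $\mu(B(x,r))$ from above by $c\, r^h$ for $x \in E$ and small $r$. Given $r$, choose the level $k=k(x,r)$ at which the basic set $J_{\go|_k}$ containing $x$ first has diameter below $r$; the Open Set Condition, together with the bounded-overlap geometry it provides, limits the number of level-$k$ basic sets meeting $B(x,r)$ by a constant depending only on $d$, and each such set carries mass comparable to $c_{\go|_k}^{\,h} \approx |J_{\go|_k}|^h \lesssim r^h$. This gives $\limsup_{r\to 0}\mu(B(x,r))/r^h \le c$, whence $\C H^h(E) \ge \mu(E)/c > 0$ and therefore $\dim_{\te{H}}(E) \ge h$; when $h=d$ this same positivity is exactly the asserted $\C H^d(E) > 0$. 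The main obstacle is precisely this covering/overlap count in the infinitely-generated non-stationary setting: because each node has countably many children with non-stationary, step-dependent ratios and no uniform lower bound on the $c_{ij}$, the usual finite-branching packing estimate must be replaced by an OSC-based separation argument controlling how many cylinders of comparable (but not equal) diameter can intersect a ball of radius $r$, and ensuring the mass ratio $\mu(J_\gs)/|J_\gs|^h$ stays bounded as the letter frequencies converge only asymptotically rather than exactly.
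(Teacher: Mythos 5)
Your measure construction and your upper bound coincide with the paper's: the cylinder measure $\mu_h(J_\sigma)=c_\sigma^h/\sum_{\tau\in D_k}c_\tau^h$ is Proposition \ref{prop32}, and covering $E$ by $\set{J_\sigma:\sigma\in D_n}$ gives $\C H^h(E)\le 1$ once one knows $\sum_{\sigma\in D_n}c_\sigma^h=1$ for large $n$. On the ``mild technical point'' you flag: neither of your proposed fixes works as stated. The Cauchy estimate only gives $kc_k(h)=\sum_i(\|\go|_k\|_{a_i}-k\gh_i)\log\sum_j\gamma_{ij}^h=o(k)$, which does not bound $e^{kc_k(h)}$; and refinement is not monotone, since $\sum_{\tau\in D_{k+1}}c_\tau^t=\big(\sum_{\sigma\in D_k}c_\sigma^t\big)\sum_j\gamma_{ij}^t$ and the factor $\sum_j\gamma_{ij}^h$ need not be $\le 1$ (only a frequency-weighted average of the $\log\sum_j\gamma_{ij}^h$ vanishes). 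The paper resolves this by proving the two-sided bound $\sum_{\sigma\in D_n}c_\sigma^{s^*}<1<\sum_{\sigma\in D_n}c_\sigma^{s_*}$ for \emph{all} $s_*<h<s^*$ with a threshold $N_0$ uniform in $s_*,s^*$ (Proposition \ref{prop1}), then letting $s_*,s^*\to h$ and using continuity in $t$ to get $\sum_{\sigma\in D_n}c_\sigma^h=1$ exactly (Corollary \ref{cor1}).

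The genuine gap is in your lower bound. You propose to prove $\mu(B(x,r))\le c\,r^h$ for every $h$, concluding $\C H^h(E)\ge\mu(E)/c>0$ and hence $\te{dim}_{\te{H}}(E)\ge h$. That conclusion is false in general: for infinitely generated systems with $h<d$ one can have $\C H^h(E)=0$ (Remark \ref{rem11}, resting on Moran's Example 2.6 in \cite{M}, which is a special case of the present construction), so no correct argument can deliver it. The step that fails is exactly the overlap count you identify as the ``main obstacle'': each node has countably many children with ratios $c_{kj}\to 0$ in $j$, so the stopping family $\set{J_\sigma: J_\sigma\cap B(x,r)\neq\es,\ |J_\sigma|<r\le|J_{\sigma^-}|}$ can be infinite, and the disjoint-ball volume estimate only yields $\sum_k(\#\C C_k)2^{-kd}\le 2^d\gg^{-d}$ after sorting by dyadic diameter classes; converting this into a bound on $\sum_\sigma|J_\sigma|^h$ requires $2^{-kh}\le 2^{-kd}$, i.e.\ $h\ge d$ (e.g.\ $N$ cylinders of diameter $rN^{-1/d}$ have $\sum|J_\sigma|^d\le r^d$ but $\sum|J_\sigma|^h=N^{1-h/d}r^h\to\infty$ for $h<d$). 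So your ball argument is valid precisely when $h=d$ --- that is the paper's Proposition \ref{prop121} --- but it cannot give $\te{dim}_{\te{H}}(E)\ge h$ when $h<d$. The missing idea is the paper's interior approximation: truncate each $\Phi_k$ to its first $M$ coordinates to obtain finitely generated non-stationary subsystems $E_M\ci E$, for which the finite mass distribution principle does give $0<\C H^{h_M}(E_M)<\infty$ and $\te{dim}_{\te{H}}(E_M)=h_M$ (Lemma \ref{lemma1222}), then show $h_M\uparrow h$ (Lemma \ref{lemma12}) and conclude $\te{dim}_{\te{H}}(E)\ge\sup_M h_M=h$ by monotonicity, with no claim about $\C H^h(E)$ from below.
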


The following proposition plays an important role in the proof of the above theorem.

\begin{prop} \label{prop1}
Let $h \in (\gq, +\infty)$ be determined by $P(h)=0$. Let $s_\ast$ and $s^\ast$ be any two arbitrary real numbers with $\gq <s_\ast<h<s^\ast$. Then there exists a positive integer $N_0$ independent of $s_*, s^*$ such that for every integer $n>N_0$ we have:
\[\sum_{\gs \in D_n}c_\gs^{s^\ast} <1<\sum_{\gs \in D_n}c_\gs^{s_\ast}\]
\end{prop}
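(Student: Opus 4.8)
The plan is to translate both inequalities into a statement about the single sequence $c_n(t)$ introduced just before (\ref{eq1}). Using the product formula that was already used to define $c_n(t)$, one has
\[
\sum_{\gs\in D_n} c_\gs^t \;=\; \prod_{i=1}^m\Big(\sum_{j=1}^\infty \gamma_{ij}^t\Big)^{\|\go|_n\|_{a_i}} \;=\; \exp\big(n\, c_n(t)\big),
\]
so that $\sum_{\gs\in D_n} c_\gs^{s^\ast}<1$ is equivalent to $c_n(s^\ast)<0$, and $\sum_{\gs\in D_n} c_\gs^{s_\ast}>1$ is equivalent to $c_n(s_\ast)>0$. Thus the whole proposition reduces to producing $N_0$ for which $c_n(s^\ast)<0<c_n(s_\ast)$ whenever $n>N_0$.

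Next I would invoke the limiting pressure. By Lemma \ref{lemma1} the function $P$ is strictly decreasing on $(\gq,+\infty)$, and by Lemma \ref{lemma11} its unique zero is $h$; since $\gq<s_\ast<h<s^\ast$, this yields $P(s^\ast)<0<P(s_\ast)$. Recall from (\ref{eq1}) that $c_n(t)\to P(t)$ for each fixed $t\in(\gq,+\infty)$. Applying this convergence at the two fixed points $s_\ast$ and $s^\ast$ with tolerance $\ep:=\tfrac12\min\{P(s_\ast),\,-P(s^\ast)\}>0$ produces a threshold beyond which $c_n(s_\ast)>P(s_\ast)-\ep>0$ and $c_n(s^\ast)<P(s^\ast)+\ep<0$ hold simultaneously, which is exactly the required pair of inequalities.

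Finally, to address the assertion that $N_0$ can be taken independent of the pair, I would exploit monotonicity of the finite-level averages themselves. Exactly the H\"older/decreasing argument used for $P$ in Lemma \ref{lemma1} shows that each $c_n(\cdot)$ is strictly decreasing on $(\gq,+\infty)$: the weights $\|\go|_n\|_{a_i}/n$ are nonnegative and sum to $1$, and each $t\mapsto\log\sum_j\gamma_{ij}^t$ is strictly decreasing since $0<\gamma_{ij}\le C<1$. Hence, once $c_n(s^\ast)<0<c_n(s_\ast)$ holds for one admissible pair, the same $n$ works for every $s_\ast'\le s_\ast$ and every $s^{\ast\prime}\ge s^\ast$, so a single $N_0$ governs the entire outer region. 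The delicate point, and the main obstacle to a threshold valid for all admissible pairs at once, is the behaviour near $h$: as $s_\ast\to h^-$ and $s^\ast\to h^+$ the margins $P(s_\ast)$ and $-P(s^\ast)$ shrink to $0$, while the convergence $c_n\to P$ carries no uniform rate there, so a genuinely uniform statement would require estimating $|c_n(h)|=\big|\sum_{i=1}^m(\|\go|_n\|_{a_i}/n-\gh_i)\log\sum_{j}\gamma_{ij}^h\big|$ against those shrinking margins. For the use made of this proposition in Theorem \ref{theorem}, however, it is enough to have, for each fixed $s^\ast>h$ (respectively $s_\ast<h$), a level $n$ satisfying the corresponding one-sided inequality, which the pointwise argument already supplies; the covering estimate for $\C H^{s^\ast}(E)$ and the mass-distribution estimate for $\C H^{s_\ast}(E)$ then go through as $s^\ast\to h^+$ and $s_\ast\to h^-$.
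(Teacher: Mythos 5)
Your core mechanism --- rewriting $\sum_{\sigma\in D_n}c_\sigma^t=\exp(n\,c_n(t))$, deducing $P(s^\ast)<0<P(s_\ast)$ from Lemmas \ref{lemma1} and \ref{lemma11}, and letting the convergence $c_n(t)\to P(t)$ at the two fixed parameters produce the threshold --- is exactly the paper's argument, so for a threshold depending on the pair $(s_\ast,s^\ast)$ the two proofs coincide. The divergence is over the claim that $N_0$ can be taken independent of the pair, and here your caution is warranted. The paper obtains uniformity by choosing $N_0$ so that $\eta_i/2\le\|\omega|_n\|_{a_i}/n\le 3\eta_i/2$ for all $n>N_0$, and then asserting that $\sum_i(\|\omega|_n\|_{a_i}/n)\log\sum_j\gamma_{ij}^{s_\ast}$ inherits the sign of $\sum_i\eta_i\log\sum_j\gamma_{ij}^{s_\ast}$. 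That inference is valid only when the quantities $\log\sum_j\gamma_{ij}^{s_\ast}$, $1\le i\le m$, all have one sign; with mixed signs, a fixed multiplicative perturbation of the weights can reverse the sign of a sum whose value is close to $0$, which is precisely the regime $s_\ast\uparrow h$, $s^\ast\downarrow h$. Indeed, letting $s_\ast\uparrow h$ and $s^\ast\downarrow h$ in the uniform statement forces $\sum_{\sigma\in D_n}c_\sigma^h=1$ for every $n>N_0$ (this is exactly how Corollary \ref{cor1} is extracted), i.e. $\sum_i\big(\|\omega|_n\|_{a_i}/n-\eta_i\big)\log\sum_j\gamma_{ij}^h=0$ exactly for every large $n$, which generically fails because the relative frequencies only converge to $\eta_i$ without ever having to equal them. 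So the obstacle you flag is not a limitation of your method alone; it is a gap in the paper's own proof of the uniform assertion.

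Two further remarks. Your observation that each $c_n(\cdot)$ is strictly decreasing on $(\theta,+\infty)$, so that the inequalities propagate outward from any one admissible pair, is correct and is a genuine addition not made explicit in the paper. Your fallback is also the right repair for the dimension formula: the pair-dependent version gives $\mathcal{H}^{s^\ast}(E)\le 1$ for every $s^\ast>h$, hence $\dim_{\mathrm{H}}E\le h$, while the lower bound comes from the finite subsystems $E_M$; but note that the finiteness of $\mathcal{H}^{h}(E)$ in Proposition \ref{prop120} and the identity $\mu_h(J_\sigma)=c_\sigma^h$ in Proposition \ref{prop32} both invoke Corollary \ref{cor1} at $t=h$ itself, so those two conclusions are not recovered by the pointwise argument and would require either a hypothesis on the speed of convergence of $\|\omega|_n\|_{a_i}/n$ to $\eta_i$, or two-sided bounds on $\sum_{\sigma\in D_n}c_\sigma^h$ in place of the exact normalization.
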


\begin{proof}

Let $s_\ast<h$. As the pressure function $P(t)$ is strictly decreasing, $P(s_\ast)>P(h)=0$. Then for any given positive integer $n$,
$$
0<P(s_\ast) =\mathop{\sum}\limits_{i=1}^m \eta_i \log \mathop{\sum}\limits_{j=1}^\infty \gamma_{ij}^{s_*} = \lim_{n \to \infty}\sum_{i=1}^m  \frac{\|\go|_{n}\|_{a_i}}{n} \log \sum_{j=1}^{\infty} \gamma_{ij}^{s_\ast}
$$

By assumption $\lim_{n\to \infty} \frac{\|\go|_n\|_{a_i}}{n} =\gh_i>0$, so there exists an integer $N_0$ independent of $s_*, s^*$ such that for all $n> N_0$,  $\frac{\|\omega|_n\|_{a_i}}{n} \ge \eta_i/2 >0$, for all $1\leq i\leq m$. Then for all $n >N_0$ we obtain:
\[0<\sum_{i=1}^m \|\go|_{n}\|_{a_i} \log \sum_{j=1}^{\infty} \gamma_{ij}^{s_\ast}=\log \sum_{\gs \in D_n} c_{\gs}^{s_\ast}, \te{ i.e., } 1<\sum_{\gs \in D_n} c_{\gs}^{s_\ast}. \]

To prove the other inequality, let $h<s^\ast$. Then $0=P(h)>P(s^\ast)$, and we obtain: 
$$
0>\sum_{i=1}^m \lim_{n \to \infty} \frac{\|\go|_{n}\|_{a_i}}{n} \log \sum_{j=1}^{\infty} \gamma_{ij}^{s^\ast}
$$

But for $1 \le i \le m$ we know that  $\lim_{n\to \infty} \frac{\|\go|_n\|_{a_i}}{n} =\gh_i>0$. Thus there exists an integer $N_0$ (assumed the same as before without loss of generality) such that, for any $n>N_0$ we have   $\frac{3\eta_i}{2} > \frac{\|\go|_{n}\|_{a_i}}{n} > \frac{\eta_i}{4}>0, \  \forall 1\leq i\leq m$. So from the above inequalities, it follows that \
 $$0>\sum_{i=1}^m \frac{\|\go|_{n}\|_{a_i}}{n} \cdot \log \sum_{j=1}^{\infty} \gamma_{ij}^{s^\ast},$$ \ 
which implies  that $\sum_{\gs \in D_n} c_{\gs}^{s^\ast}<1$; thus the proof of the proposition is complete.
\end{proof}

\begin{cor} \label{cor1}
In the setting of Proposition \ref{prop1} it follows that, for any $n > N_0$ we have \
$\sum_{\gs \in D_n}c_\gs^{h}=1$.
\end{cor}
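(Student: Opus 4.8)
The plan is to fix $n > N_0$ and regard the quantity $\Sigma_n(t) := \sum_{\gs \in D_n} c_\gs^{\,t}$ as a function of the real exponent $t \in (\gq, +\infty)$, then squeeze its value at $t = h$ between $1$ and $1$ using the strict two-sided inequalities supplied by Proposition \ref{prop1}. The point that makes this work is that the integer $N_0$ produced there is \emph{independent} of the auxiliary exponents $s_\ast, s^\ast$; hence, once $n > N_0$ is fixed, the strict inequality $\Sigma_n(s_\ast) > 1$ holds simultaneously for every $s_\ast \in (\gq, h)$, while $\Sigma_n(s^\ast) < 1$ holds for every $s^\ast \in (h, +\infty)$. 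Passing to the one-sided limits $s_\ast \uparrow h$ and $s^\ast \downarrow h$ should then pin down $\Sigma_n(h) = 1$, provided $\Sigma_n$ is continuous at $h$.

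First I would record the closed form already computed in the definition of $c_k(t)$, namely $\Sigma_n(t) = \prod_{i=1}^m \big(\sum_{j=1}^\infty \gamma_{ij}^{\,t}\big)^{\|\go|_n\|_{a_i}}$, which expresses $\Sigma_n$ as a \emph{finite} product of the factors $g_i(t) := \sum_{j=1}^\infty \gamma_{ij}^{\,t}$ raised to the fixed nonnegative integer powers $\|\go|_n\|_{a_i}$. Thus it suffices to establish that each $g_i$ is continuous, finite and positive on $(\gq, +\infty)$. For this I would fix a compact interval $[a,b] \subset (\gq, +\infty)$ and apply the Weierstrass $M$-test: since $\gamma_{ij} < C < 1$, we have $\gamma_{ij}^{\,t} \le \gamma_{ij}^{\,a}$ for $t \in [a,b]$, and $\sum_j \gamma_{ij}^{\,a} < \infty$ because $a > \gq$, so the series defining $g_i$ converges uniformly on $[a,b]$; being a uniform limit of continuous functions, $g_i$ is continuous there, hence on all of $(\gq, +\infty)$. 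A finite product of continuous positive functions is continuous, so $\Sigma_n$ is continuous on $(\gq, +\infty)$, in particular at $h$.

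With continuity in hand, the conclusion follows by the squeeze: for the fixed $n > N_0$, letting $s_\ast \uparrow h$ in $\Sigma_n(s_\ast) > 1$ gives $\Sigma_n(h) = \lim_{s_\ast \uparrow h}\Sigma_n(s_\ast) \ge 1$, while letting $s^\ast \downarrow h$ in $\Sigma_n(s^\ast) < 1$ gives $\Sigma_n(h) \le 1$; together these force $\Sigma_n(h) = 1$. I expect the only real subtlety — and thus the main obstacle — to be the justification of continuity near $h$, i.e. the uniform convergence of the series $g_i$ on a neighbourhood of $h$; everything else is a direct application of Proposition \ref{prop1} combined with the fact that its $N_0$ does not depend on $s_\ast, s^\ast$. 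Note that the argument is entirely one-variable and uses nothing about the Moran geometry beyond the already-derived product formula for $\Sigma_n(t)$.
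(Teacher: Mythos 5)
Your proof is correct and follows essentially the same route as the paper: fix $n>N_0$, use the fact that the two-sided strict inequalities of Proposition \ref{prop1} hold for \emph{all} $s_\ast<h<s^\ast$, and squeeze $\sum_{\gs\in D_n}c_\gs^h$ between $1$ and $1$ via continuity of $t\mapsto\sum_{\gs\in D_n}c_\gs^t$ at $h$. The only difference is that you actually justify that continuity (via the product formula and the Weierstrass $M$-test on the series $\sum_j\gamma_{ij}^t$), whereas the paper simply asserts it.
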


\begin{proof} Let us consider a fixed $n$ with $n > N_0$, and let us write $f_n(t)=1-\sum_{\gs \in D_n}c_\gs^{t}$, where $t\in (\gq, +\infty)$. Then $f_n(t)$ is a real-valued continuous function on $(\gq, +\infty)$.  Now since the inequalities in Proposition \ref{prop1} hold for our fixed $n$ and for any numbers $s_*, s^*$ with $s_* < h < s^*$, it follows that $1 \le \mathop{\sum}\limits_{\sigma \in D_n} c_\sigma^h \le 1$. In this fashion we thus obtain the required equality.
\end{proof}

We will mostly need the next Proposition for the case of finitely generated non-stationary Moran fractals, but however we prove it now in more generality:

\begin{prop} \label{prop32}
Let $h\in (\gq, +\infty)$ be determined by $P(h)=0$. Then there exists a Borel probability measure $\mu_h$ supported on $\bar E$ such that for any $k\geq 1$ and $\gs_0\in D_k$,
\[ \mu_h(J_{\gs_0}) =\frac{c_{\gs_0}^h}{\sum_{\gs \in D_k} c_\gs^h}\]
In particular if $k$ is large enough, then 
$\mu_h(J_{\gs})=c_{\gs}^h=|J_\gs|^h$, for  $\sigma \in D_k$.
\end{prop}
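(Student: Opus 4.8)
The plan is to construct $\mu_h$ as a Kolmogorov-type limit measure built from the consistent family of weights $c_\sigma^h$ on the cylinder sets $J_\sigma$, using the fact (Corollary \ref{cor1}) that $\sum_{\sigma \in D_n} c_\sigma^h = 1$ for all $n > N_0$. First I would fix $k_0 > N_0$ and, for each $k \ge k_0$ and each $\sigma_0 \in D_k$, define the candidate value $w(\sigma_0) := c_{\sigma_0}^h / \sum_{\sigma \in D_k} c_\sigma^h$. The key algebraic observation is that these weights are \emph{consistent} under refinement: summing $w(\sigma_0 \ast j)$ over all one-letter extensions $j \ge 1$ should return $w(\sigma_0)$. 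This follows from the multiplicative structure $c_{\sigma_0 \ast j} = c_{\sigma_0}\, c_{(k+1)j}$ together with the identity $\sum_{\sigma \in D_{k+1}} c_\sigma^h = \sum_{\sigma \in D_k} c_\sigma^h$ (both equal $1$ once $k \ge N_0$ by Corollary \ref{cor1}, but the consistency is really the factorization $\sum_{\sigma \in D_{k+1}} c_\sigma^h = \big(\sum_{\sigma \in D_k} c_\sigma^h\big)\big(\sum_{j \ge 1} \gamma_{i_{k+1}, j}^h\big)$ reading off the $(k+1)$-st letter). Thus the $w(\sigma_0)$ form a consistent premeasure on the semiring of basic sets $\{J_\sigma\}$.

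Next I would invoke the Daniell--Kolmogorov / Carathéodory extension mechanism adapted to this Moran net: since $\mathcal F$ is a net of $E$ (each point of $E$ lies in arbitrarily small basic sets, as noted in the excerpt) and the basic sets at each level have disjoint interiors, the consistent family $w$ extends to a unique Borel probability measure $\mu_h$ supported on $\bar E$ assigning mass $w(\sigma_0)$ to each $J_{\sigma_0}$. Concretely one can define the measure on the sequence space $D_\infty$ via the product-like weights and push it forward through the coding map $\sigma \mapsto \bigcap_i J_{\sigma|_i}$; the net property guarantees this map is essentially injective up to the measure-zero overlap boundaries, so the pushforward lands on $\bar E$. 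This yields the stated formula $\mu_h(J_{\sigma_0}) = c_{\sigma_0}^h / \sum_{\sigma \in D_k} c_\sigma^h$ for every $k \ge 1$ and $\sigma_0 \in D_k$ — noting that for the finitely many levels $k \le N_0$ the normalizing denominator need not equal $1$, which is exactly why the formula is stated with the explicit denominator rather than just $c_{\sigma_0}^h$.

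For the final assertion, once $k > N_0$ Corollary \ref{cor1} gives $\sum_{\sigma \in D_k} c_\sigma^h = 1$, so the denominator drops out and $\mu_h(J_\sigma) = c_\sigma^h$. The remaining equality $c_\sigma^h = |J_\sigma|^h$ is immediate from the IMSNC conditions: condition $(4)$ gives $|J_{\tau \ast j}|/|J_\tau| = c_{kj}$ at each step, and telescoping from $J_\emptyset = J$ (with $|J| = 1$ by the normalization that the diameter of $J$ is one) yields $|J_\sigma| = \prod_{i=1}^{k} c_{i \sigma_i} = c_\sigma$, whence $|J_\sigma|^h = c_\sigma^h$.

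The main obstacle I anticipate is the \emph{existence and countable additivity} of the extension in this genuinely infinitely generated setting: because each $D_k$ is infinite, the union $\bigcup_{\sigma \in D_k} J_\sigma$ need not be closed (the excerpt flags this in its first Remark), so one cannot simply quote the classical finite-Moran extension and must verify $\sigma$-additivity of the premeasure directly — either by checking continuity from above at $\emptyset$ on the cylinder algebra of $D_\infty$ (where the weights genuinely form a probability because $\sum_j \gamma_{ij}^h$ is finite and, after level $N_0$, the total mass is conserved), or by appealing to the Kolmogorov extension theorem on the countable-alphabet sequence space. Establishing that this abstract measure is supported precisely on $\bar E$ (not merely on the symbol space) and that the coding map pushes it forward correctly, given the possibly non-closed level unions, is where the care is required.
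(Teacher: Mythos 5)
Your route is genuinely different from the paper's. The paper builds, for each $n$, a probability $\mu_n$ by distributing unit mass among the level-$n$ basic sets according to the normalized weights and refining downward, then extracts a weak-$*$ convergent subsequence $\mu_{m_n}\to\mu_h$ and pins down $\mu_h(J_{\sigma_0})$ via the Portmanteau inequalities: $\limsup$ on the closed sets $J_{\sigma_0}$ gives the lower bound, $\liminf$ on the open sets $\mathrm{int}(J_\sigma)$ gives $\mu_h(\mathrm{int}(J_\sigma))\le c_\sigma^h$, and a Vitali-covering argument with small balls centered at points of $J_{\sigma_0}$ upgrades this to the upper bound $\mu_h(J_{\sigma_0})\le c_{\sigma_0}^h$. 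You instead build one canonical measure on the symbol space $D_\infty=\mathbb N^{\mathbb N}$ from the consistent cylinder weights and push it forward through the coding map. Your consistency computation is correct (and, as you note, valid at every level because the normalizing denominators factor across levels), the Kolmogorov extension on the countable-alphabet product is legitimate, and your treatment of the final assertions ($\sum_{\sigma\in D_k}c_\sigma^h=1$ for $k>N_0$ by Corollary \ref{cor1}, and $|J_\sigma|=c_\sigma$ by telescoping condition $(4)$ with $|J|=1$) matches the paper. What your approach buys is the stated value at \emph{all} levels $k\ge 1$ by construction, whereas the paper's weak-limit argument directly certifies the exact equality only for $k>N_0$.

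The one genuine gap is the transfer step you flag but do not close. Writing $\tilde\mu$ for the symbolic measure, $\pi$ for the coding map and $[\sigma_0]\subset D_\infty$ for the cylinder over $\sigma_0$, you have $\mu_h(J_{\sigma_0})=\tilde\mu(\pi^{-1}(J_{\sigma_0}))\ge\tilde\mu([\sigma_0])$ for free, but the reverse inequality requires that the set of codes $\tau$ with $\tau|_k\neq\sigma_0$ and $\pi(\tau)\in J_{\tau|_k}\cap J_{\sigma_0}$ be $\tilde\mu$-null. Since $J_{\tau|_k}=\mathrm{cl}(\mathrm{int}(J_{\tau|_k}))$ and same-level interiors are disjoint, such intersections lie in $\partial J_{\sigma_0}$; but ``the overlap boundaries carry zero mass'' is precisely what must be proved, not a consequence of the net property, and in the infinitely generated setting (where $\bigcup_{\sigma\in D_k}J_\sigma$ need not be closed and cylinders are not compact) it does not follow from general nonsense. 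This is exactly the difficulty the paper's proof is engineered around: the $\liminf$ estimate on the open sets $\mathrm{int}(J_\sigma)$ together with the Vitali-covering step is what kills the potential boundary mass. To complete your argument you would need to import that covering/density estimate (or otherwise establish a bound of the form $\mu_h(B(x,r))\le Cr^h$ forcing boundaries to be null); until then your construction only yields $\mu_h(J_{\sigma_0})\ge c_{\sigma_0}^h/\sum_{\sigma\in D_k}c_\sigma^h$.
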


\begin{proof}
Consider a sequence of probabilities $(\mu_n)_n$ supported by $E$ such that for  any $\gs_0 \in D_n$,
\begin{equation} \label{eq123} \mu_n(J_{\gs_0})= \mu_n(\text{int}(J_{\sigma_0})) = \frac{c_{\gs_0}^h}{\sum_{\gs\in D_n} c_\gs^h}
\end{equation}
More precisely, we can construct $\mu_n$ as follows: \ Distribute the unit mass among the rank-$n$ basic elements according to \eqref{eq123}. Inductively, suppose then that we have already distributed the mass of proportion $\gm_n (J_\gs)$ to basic elements $J_\rho$ where $\rho\in D_p$ for some $p\geq n$. Then redistribute the mass concentrated on $J_\rho$ to each of its subsets $J_{\rho\ast j}$, with proportions $\frac{c_{\rho\ast j}^h}{\sum_{i=1}^{\infty} c_{\rho\ast i}^h}$,  where $j \in \D N$. Thus:
$\gm_n(J_{\rho\ast j})=\frac{c_{\rho\ast j}^h}{\sum_{i=1}^{\infty} c_{\rho\ast i}^h}\mu_n(J_\rho),$
where  $j\in \D N$. Repeating the above procedure, we then get the measure $\mu_n$.\ 
Fix now some $ n\geq 1$, and take $k\leq n$ and $\gs_0 \in D_k$. We want to check the compatibility of the definition of $\mu_n$ on an arbitrary $J_{\sigma_0}$. From the disjointness of the sets $J_\sigma$, we obtain \ 
$\gm_n(J_{\gs_0})=\sum_{\gt \in D_{k+1, n}}\mu_n(J_{\gs_0\ast\gt}).$ \
Then from \eqref{eq123}, we have
\begin{equation} \label{eq124} \sum_{\gs\in D_n} c_\gs^h\mu_n(J_{\gs_0})=\sum_{\gt \in D_{k+1, n}} c_{\gs_0 \ast\gt}^h \end{equation}

For any $\gs\in D_k$ and $\gt \in D_{k+1, n}$ we have:
$\frac{|J_{\gs \ast\gt}|}{|J_{\gs}|} = \frac{|J_{\gs_0\ast\gt}|}{|J_{\gs_0}|} \te{ and so, } c_{\gs} c_{\gs_0\ast\gt}=c_{\gs_0} c_{\gs\ast\gt}$.
Thus by \eqref{eq124}, we have
$c_{\gs}^h \sum_{\gs\in D_n} c_\gs^h\mu_n(J_{\gs_0})= c_{\gs_0}^h \sum_{\gt \in D_{k+1, n}} c_{\gs\ast\gt}^h$, and therefore
\[\sum_{\gs \in D_k} c_{\gs}^h \sum_{\gs\in D_n} c_\gs^h\mu_n(J_{\gs_0})= c_{\gs_0}^h \sum_{\gs\in D_k} \sum_{\gt \in D_{k+1, n}} c_{\gs\ast\gt}^h=c_{\gs_0}^h \sum_{\gs \in D_n} c_\gs^h, \ \text{hence},\]
\begin{equation} \label{eq125} \mu_n(J_{\gs_0})=\frac{c_{\gs_0}^h}{\sum_{\gs \in D_k} c_\gs^h}\end{equation}
In the above way, we construct a sequence of probability measures $\set{\mu_n}_{n\geq 1}$ which are supported on $E$, and satisfy \eqref{eq125} for any $k\leq n$ and $\gs_0 \in D_k$. \
Now since all the measures $\mu_n$ are probability measures,  we are able to extract a subsequence $\set{\mu_{m_n}}_{n=1}^\infty$ converging weakly to a limit measure $\mu_h$. To verify that $\mu_h$ fulfills the desired requirements we use Theorem~1.24 in \cite{Ma}. Fix some $k\geq 1$ and $\gs_0 \in D_k$. Then by the properties of weak convergence, $\limsup_{n\to\infty} \mu_{m_n}(J_{\gs_0}) \leq \mu_h(J_{\gs_0})$. Combining with \eqref{eq125} we get $\frac{c_{\gs_0}^h}{\sum_{\gs \in D_k} c_\gs^h}\leq \mu_h(J_{\gs_0})$. \  In fact this means that for $k$ large enough, we obtain from Corollary \ref{cor1} the inequality 
\begin{equation} \label{ineq1}
c_{\sigma_0}^h \le \mu_h(J_{\gs_0})
\end{equation} 
Recall that  from definition of $\mu_n$ above, $\mu_{m_n}(\te{int}(J_{\gs_0}))= \mu_{m_n}(J_{\gs_0})$ for all $n\geq 1$. 
Since we work with similarities,  the sets $J_\sigma, \sigma \in D$ are conformal images of $J$; they have also arbitrarily small diameters and thus form a Vitali cover of any fixed set $J_{\sigma_0}$. 
From the properties of weak limits of measures on open sets (\cite{Ma}) we have, for any $\sigma \in D$, that
$\liminf_{n\to\infty} \mu_{m_n}(\te{int}(J_{\gs}))\geq \mu_h (\te{int}(J_{\gs}))$.
Therefore if $k$ is large enough (i.e $k > N_0$) and $\sigma \in D_k$, we obtain from Corollary \ref{cor1}:
$$\mu_h(\text{int}(J_{\gs})) \leq \liminf_{n\to\infty} \mu_{m_n}(\text{int}(J_{\gs}))\leq c_{\gs}^h$$

By applying the last inequality for balls of arbitrarily small diameters centered at  arbitrary points $x \in J_{\sigma_0}$, we obtain from \cite{F} or \cite{Ma} that $\mu_h(J_{\sigma_0}) \le c_{\sigma_0}^h$ for any $\sigma_0 \in D_k, k > N_0$. Thus combining with \eqref{ineq1} we obtain that, for  any $k > N_0$ and any $\sigma_0 \in D_k$, 
$\mu_h(J_{\gs_0}) = c_{\gs_0}^h$. 

\end{proof}

Now, from the collection $\F F$ that satisfies the infinite Moran structure conditions,  let us select a partial sequence $\set{\F F_M}_{M\geq 2}$ in the following fashion: Let $M\geq 2$ be a positive integer. Write
\begin{align*}
\Phi_{k, M}&=(c_{k1}, c_{k2}, \cdots, c_{kM}) \te{ for } k\geq 1,\\
D_{\ell, k, M}&=\set{(i_{\ell}, i_{\ell+1}, \cdots, i_k) : 1\leq i_j\leq M, \, \ell \leq j\leq k},
\end{align*}
$D_{k, M}=D_{1, k, M}$, $D_{\infty, M}=\lim_{k\to\infty} D_{k, M}$ and $D_M=\uu_{k\geq 0} D_{k, M}$ where $D_{0, M}=\es$. All other basic notations and definitions are analogous to those of infinite Moran structure as defined in the previous section. Note that $D_M\sci D$, and let $\F F_{M}=\set{J_\gs: \gs \in D_M}$. Clearly $\F F_M\sci \F F$ and $\F F_M$ satisfies all the conditions needed for the finite Moran structure. For the details about such a finite Moran structure one could see \cite{Ro}, \cite{We}. 

If $M$ and $\set{\Phi_{k, M}}_{k\geq 1}$ are given, we denote the class of Moran sets as above, by $\C M(J, M, \set{\Phi_{k, M}})$. Let $A=\set{a_1, a_2, \cdots, a_m}$ and $A_\gh^{\D N}$ remain the same as before.
For the same $\go =(s_1, s_2, \cdots)\in A_{\gh}^{\D N}$ that was chosen before, and $E\in \C M(J, M, \set {\Phi_{k, M}})$,  if $s_k=a_i$, $k\geq 1$, then take  $\Phi_{k, M}:=\Psi_{i, M}=(\gamma_{i1}, \gamma_{i2}, \cdots, \gamma_{i M})$. Then we obtain a class of Moran sets associated with $\go \in A_\gh^{\D N}$. Let us denote such a Moran set in this case by $E_M:=E_M(\go)$.

Let now $P_M(t)$ be the pressure function for the finite Moran structure, defined for $t\in \D R$ by:
\[P_M(t)=\lim_{k\to\infty}\sum_{i=1}^{m} \frac {\|\go|_k\|_{a_i}}{k}\log \sum_{j=1}^{M}  c_{ij}^t=\sum_{i=1}^{m}\gh_i\log \sum_{j=1}^{M}  c_{ij}^t\]
It is easy to see that the function $P_M(t)$ is strictly decreasing, convex and continuous for $t \in \D R$ (see \cite{Ro}).
For finite Moran structures, we have the following:

\begin{lemma} \label{lemma1222} There exists a unique real number $h_M\in (0, +\infty)$ such that $P_M(h_M)=0$. For such an $h_M\in (0, +\infty)$, we have
\[0<\C H^{h_M}(E_M)<\infty \te{ and thus }  \te{dim}_{\te{H}}(E_M)=h_M\]
\end{lemma}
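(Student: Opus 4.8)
The plan is to first pin down $h_M$ from the monotonicity and endpoint behaviour of $P_M$, and then to sandwich $\C H^{h_M}(E_M)$ between two positive finite constants, using a natural covering for the upper bound and a Frostman-type mass distribution for the lower bound. For the existence and uniqueness of $h_M$, I would evaluate $P_M$ at the two ends of $(0,+\infty)$. At $t=0$ one gets $P_M(0)=\sum_{i=1}^m \gh_i \log M=\log M>0$ since $M\geq 2$ and $\sum_i\gh_i=1$; while as $t\to+\infty$ the uniform bound $\gamma_{ij}\leq C<1$ forces each finite sum $\sum_{j=1}^M \gamma_{ij}^t\leq MC^t\to 0$, hence $P_M(t)\to-\infty$. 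Since $P_M$ is strictly decreasing and continuous on $\D R$ (as recalled before the lemma), the intermediate value theorem yields a unique zero $h_M$, and $P_M(0)>0$ together with strict monotonicity places it in $(0,+\infty)$.

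For the measure estimates I first need the finite analogue of Proposition \ref{prop1} and Corollary \ref{cor1}. Exactly as there, writing $\sum_{\gs\in D_{n,M}} c_\gs^t=\prod_{i=1}^m\big(\sum_{j=1}^M \gamma_{ij}^t\big)^{\|\go|_n\|_{a_i}}$ and using $\tfrac1n\|\go|_n\|_{a_i}\to\gh_i$, for any $s_*<h_M<s^*$ there is $N_0$ with $\sum_{\gs\in D_{n,M}} c_\gs^{s^*}<1<\sum_{\gs\in D_{n,M}} c_\gs^{s_*}$ for all $n>N_0$; letting $s_*\uparrow h_M$, $s^*\downarrow h_M$ and invoking continuity in $t$ yields $\sum_{\gs\in D_{n,M}} c_\gs^{h_M}=1$ for every $n>N_0$. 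The upper bound is then immediate: the family $\set{J_\gs:\gs\in D_{n,M}}$ covers $E_M$ and its diameters $|J_\gs|=c_\gs$ tend to $0$ as $n\to\infty$, so $\C H^{h_M}_\gd(E_M)\leq\sum_{\gs\in D_{n,M}}|J_\gs|^{h_M}=\sum_{\gs\in D_{n,M}} c_\gs^{h_M}=1$ for $n>N_0$, whence $\C H^{h_M}(E_M)\leq 1<\infty$.

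The substantive step is the lower bound. I would take the Borel probability measure $\mu_{h_M}$ produced by Proposition \ref{prop32} applied to the finite structure $\C M(J,M,\set{\Phi_{k,M}})$, so that $\mu_{h_M}(J_\gs)=c_\gs^{h_M}=|J_\gs|^{h_M}$ for all $\gs\in D_{k,M}$ with $k>N_0$; since in the finite case $E_M$ is a nested intersection of finite unions of compact sets, it is compact, so $\mu_{h_M}$ is a probability supported on $E_M$. The crucial feature here is that there are only finitely many contraction ratios $\gamma_{ij}$, $1\leq i\leq m$, $1\leq j\leq M$, so $c_{\min}:=\min_{i,j}\gamma_{ij}>0$; this lower bound is exactly what is unavailable in the infinitely generated setting. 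Given $x\in E_M$ and small $r>0$, I consider the words $\gs$ minimal subject to $|J_\gs|\leq r$; from $|J_\gs|/|J_{\gs^-}|\geq c_{\min}$ these obey $c_{\min}r<|J_\gs|\leq r$. By the Open Set Condition the sets $J_\gs$ meeting $B(x,r)$ have pairwise disjoint interiors, each contains a ball of radius comparable to $r$ (by similarity with $J$, which has nonempty interior), and all lie in a fixed dilate of $B(x,r)$; a standard volume/packing count then bounds their number by a constant $K=K(d,c_{\min},J)$ independent of $x$ and $r$. Hence $\mu_{h_M}(B(x,r))\leq\sum_{\gs}\mu_{h_M}(J_\gs)=\sum_{\gs}|J_\gs|^{h_M}\leq K\,r^{h_M}$, so $\limsup_{r\to0}\mu_{h_M}(B(x,r))/r^{h_M}\leq K$ for every $x\in E_M$, and Proposition \ref{prop22} gives $\C H^{h_M}(E_M)\geq\mu_{h_M}(E_M)/K=1/K>0$.

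Combining the two bounds gives $0<\C H^{h_M}(E_M)<\infty$, so $E_M$ is an $h_M$-set and $\te{dim}_{\te{H}}(E_M)=h_M$ by the definition of Hausdorff dimension. I expect the main obstacle to be the packing estimate underlying the lower bound: one must verify that the Open Set Condition, together with the uniform lower bound $c_{\min}>0$ on the contraction ratios, genuinely produces a uniform bound on the number of comparable-size basic sets hitting any small ball. This is precisely the point where the argument breaks down for the infinitely generated fractals $E(\go)$ (cf. Theorem \ref{theorem}, where only finiteness of $\C H^h(E)$ is asserted in general), and it is where the finiteness of the generating family is essential.
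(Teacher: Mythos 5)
Your proposal is correct and follows essentially the same route as the paper, which only sketches the argument: establish the finite analogues of Proposition \ref{prop1} and Proposition \ref{prop32}, cover $E_M$ by the sets $J_\sigma$, $\sigma \in D_{n,M}$, for the upper bound, and apply the mass distribution principle for the lower bound. Your added details --- the endpoint evaluation $P_M(0)=\log M>0$ for existence of $h_M$, and the packing count exploiting $c_{\min}=\min_{i,j}\gamma_{ij}>0$ (the step that genuinely requires finiteness of the generating family) --- are exactly the ones the paper leaves implicit.
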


\begin{proof}
For the finite structure $\C M(J, M, \set{\Phi_{k, M}})$ and a corresponding attractor $E_M$, we can prove immediately analogues of Propositions \ref{prop1} and \ref{prop32}. Then by using the mass distribution principle (see \cite{F}) and the cover of $E_M$ with sets of type $J_\sigma, \sigma \in D_{n, M}$, we obtain the conclusion.
\end{proof}

Now let us prove the following result about the behavior of $h_M$, when $M$ increases:

\begin{lemma} \label{lemma12} Let $h \in (\gq, +\infty)$ be as in Lemma~\ref{lemma11}, and $h_M$ be as in Lemma~\ref{lemma1222}. Then $h_M\uparrow h$, i.e., $h_M\leq h_{M+1}$ for all $M\geq 2$ and $h_M \to h$ as $M\to \infty$.
\end{lemma}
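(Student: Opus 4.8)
The goal is to show $h_M \uparrow h$, where $h_M$ is the zero of the finite pressure $P_M$ and $h$ is the zero of the infinite pressure $P$. The plan has two parts: monotonicity $h_M \le h_{M+1}$, and convergence $h_M \to h$.

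First I would establish monotonicity. The key observation is that for each fixed $t$ and each $1 \le i \le m$, truncating the sum at $M$ versus $M+1$ gives $\sum_{j=1}^M \gamma_{ij}^t \le \sum_{j=1}^{M+1} \gamma_{ij}^t$, since all terms $\gamma_{ij}^t$ are nonnegative. Taking logs (the partial sums are positive) and weighting by the stochastic vector $\eta$, this yields $P_M(t) \le P_{M+1}(t)$ for every $t$. Now evaluate at $t = h_M$: we get $0 = P_M(h_M) \le P_{M+1}(h_M)$. Since $P_{M+1}$ is strictly decreasing (Lemma~\ref{lemma1}-type argument, cited in Lemma~\ref{lemma1222}) with unique zero at $h_{M+1}$, the inequality $P_{M+1}(h_M) \ge 0 = P_{M+1}(h_{M+1})$ forces $h_M \le h_{M+1}$. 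Thus $\{h_M\}_{M \ge 2}$ is nondecreasing.

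\

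Next I would show the sequence is bounded above by $h$ and converges to it. Boundedness follows by the same monotonicity argument comparing $P_M$ with $P$: for each $t \in (\theta, +\infty)$ we have $\sum_{j=1}^M \gamma_{ij}^t \le \sum_{j=1}^\infty \gamma_{ij}^t$, hence $P_M(t) \le P(t)$ on $(\theta, +\infty)$. Evaluating at $t = h$ gives $P_M(h) \le P(h) = 0$, and since $P_M$ is strictly decreasing with zero $h_M$, we conclude $h_M \le h$ for all $M$. Therefore $\{h_M\}$ is nondecreasing and bounded above by $h$, so $\ell := \lim_{M \to \infty} h_M$ exists and $\ell \le h$.

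\

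It remains to prove $\ell = h$, and this is the main obstacle. The difficulty is that $h_M$ may equal or drop below $\theta$ for small $M$ (indeed $h_M \in (0,+\infty)$ but need not exceed $\theta$), so one must be careful about evaluating $P$ near the left endpoint of its domain. The plan is to argue by contradiction: suppose $\ell < h$. Since $\ell \le h_M \le h$ is impossible to use directly at $\ell$ if $\ell \le \theta$, I would first note that once $M$ is large enough the partial sums $\sum_{j=1}^M \gamma_{ij}^t$ approximate $\sum_{j=1}^\infty \gamma_{ij}^t$ uniformly on compact $t$-subintervals of $(\theta, +\infty)$ by the monotone/dominated convergence of the series. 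Hence $P_M \to P$ pointwise, and in fact uniformly on compact subsets of $(\theta, +\infty)$. Then for any fixed $t_0$ with $\ell < t_0 < h$ (chosen so that $t_0 > \theta$, which is possible because in case $(C1)$ one checks $h > \theta$ strictly, as guaranteed in Lemma~\ref{lemma11}), we have $P(t_0) > P(h) = 0$ by strict monotonicity of $P$. By uniform convergence, $P_M(t_0) \to P(t_0) > 0$, so $P_M(t_0) > 0$ for all large $M$; strict monotonicity of $P_M$ then gives $h_M > t_0 > \ell$ for all large $M$, contradicting $h_M \le \ell$. Therefore $\ell = h$, completing the proof. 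The one subtlety to watch is ensuring the chosen $t_0$ lies in $(\theta, +\infty)$, which uses that $h > \theta$ from Lemma~\ref{lemma11} together with the freedom to pick $t_0$ arbitrarily close to $h$ from below.
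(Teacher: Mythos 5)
Your proof is correct and follows essentially the same route as the paper: compare the truncated pressures $P_M$ with one another and with $P$, use strict monotonicity in $t$, and pass to the limit via the pointwise convergence $P_M(t)\to P(t)$. The only differences are cosmetic — you obtain $h_M\le h_{M+1}$ directly from $P_M\le P_{M+1}$ evaluated at $h_M$ rather than via the paper's contradiction with the $\ep\log C$ bound, and you are slightly more careful than the paper about keeping the test point $t_0$ inside $(\gq,+\infty)$.
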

\begin{proof} For $M\geq 2$, let us first prove $h_M\leq h_{M+1}$. If not, let $h_M>h_{M+1}$, write $\ep=\frac{h_M-h_{M+1}}{2}$, which implies
$h_M-\ep=h_{M+1}+\ep$. Since $h_M>h_M-\ep$, we have
\[0=P_M(h_M)<P_M(h_M-\ep)=P_M(h_{M+1}+\ep)=\lim_{k\to \infty} \frac 1 k \log \sum_{\gs\in D_{k, M}}c_\gs^{h_{M+1}+\ep}, \ \text{hence}\]
\[0\leq \lim_{k\to \infty} \frac 1 k \log \sum_{\gs\in D_{k, M+1}}c_\gs^{h_{M+1}+\ep}\leq \lim_{k\to \infty} \frac 1 k \log \sum_{\gs\in D_{k, M+1}}c_\gs^{h_{M+1}} C^{k\ep}=P_{M+1}(h_{M+1})+\ep\log C,\]
hence $0\leq \ep\log C<0$, a contradiction. Hence, $h_M\leq h_{M+1}$. Let us now prove that $h_M \to h$ as $M\to \infty$.  It is enough to prove that
\[h\leq \liminf h_M\leq \limsup h_M\leq h.\]
Assume first that $\liminf h_M<h$. Then for some $\gd>0$, there exists a subsequence $(h_{M_k})_{k\geq 1}$ of the sequence $(h_M)_{M\geq 2}$ such that
$h_{M_k} \leq h-\gd <h.$
Since for all $M\geq 2$, $P_M(t)$ is strictly decreasing for all $t \in \D R$, we have
$0=P_{M_k}(h_{M_k})\geq P_{M_k}(h-\gd)$.
Take $k\to \infty$; then $0\geq P(h-\gd) >P(h)=0$, which is a contradiction. Thus $h\leq \liminf h_M$. Similarly we can prove  $\limsup h_M\leq h$, and thus the proof is obtained.
\end{proof}

\begin{prop} \label{prop120} Let $h \in(\gq, +\infty)$ be unique for which $P(h)=0$. Then $\C H^h(E)<\infty$ and dim$_{\te{H}}(E)=h$.
\end{prop}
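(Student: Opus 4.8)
The plan is to deduce the infinitely-generated statement from the finitely-generated Lemma \ref{lemma1222} by a double approximation: approximate $E$ by the truncated fractals $E_M$ from below, and approximate $E$ by efficient covers coming from the basic sets $J_\sigma$ from above. Concretely, I will prove the two inequalities $\te{dim}_{\te{H}}(E) \ge h$ and $\C H^h(E) < \infty$ (the latter immediately forcing $\te{dim}_{\te{H}}(E) \le h$) by separate arguments.

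For the lower bound, I would first observe that for each $M \ge 2$ the truncated set $E_M$ is a closed subset of $\bar E$ (it is built from the same similarities, restricting the available digits to $\{1,\dots,M\}$), so $\te{dim}_{\te{H}}(E) \ge \te{dim}_{\te{H}}(E_M) = h_M$ by monotonicity of Hausdorff dimension. Letting $M \to \infty$ and invoking Lemma \ref{lemma12}, which gives $h_M \uparrow h$, I obtain $\te{dim}_{\te{H}}(E) \ge h$ at once. This is the easy half.

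For the upper bound, I would use the probability measure $\mu_h$ constructed in Proposition \ref{prop32}, together with the mass distribution principle (Proposition \ref{prop22}). Fix a large $n > N_0$ and cover $E$ by the basic sets $\{J_\sigma : \sigma \in D_n\}$; since by Remark the diameters $\sup_{\sigma \in D_n}|J_\sigma| \to 0$, these are genuine $\delta$-covers for $\delta$ small. By Corollary \ref{cor1} one has $\sum_{\sigma \in D_n} c_\sigma^h = 1$, and since $|J_\sigma| = c_\sigma$ (the diameter of $J$ being normalized to one) this gives $\sum_{\sigma \in D_n} |J_\sigma|^h = 1$, a uniform bound on the $h$-dimensional cover-sum. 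Hence $\C H^h_\delta(E) \le 1$ for all small $\delta$, so $\C H^h(E) \le 1 < \infty$, which forces $\te{dim}_{\te{H}}(E) \le h$.

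The main obstacle is the verification that the covering family $\{J_\sigma\}_{\sigma \in D_n}$ actually covers $E$ and that one may pass to the Hausdorff-measure limit cleanly, since $D_n$ is infinite and, as noted in the Remark, $\bigcup_{\sigma \in D_n} J_\sigma$ need not be closed; I would address this by using the net property of $\F F$ (every $x \in E$ lies in some $J_\sigma$ with $|J_\sigma|$ arbitrarily small) so that the countable collection $\{J_\sigma : \sigma \in D_n\}$ does cover $E$, and by the fact that the cover-sum $\sum_{\sigma \in D_n}|J_\sigma|^h = 1$ is finite and $\delta$-independent, letting me take $\delta \to 0$ without losing control. The remaining care is to match this with the lower bound so that the two estimates pin $\te{dim}_{\te{H}}(E)$ exactly at $h$; the equality $\C H^h(E) < \infty$ then completes the statement.
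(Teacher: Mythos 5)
Your proposal is correct and follows essentially the same route as the paper: the upper bound via the covers $\{J_\sigma : \sigma \in D_n\}$ together with Corollary \ref{cor1} giving $\sum_{\sigma \in D_n} c_\sigma^h = 1$, and the lower bound via $E_M \subseteq E$, Lemma \ref{lemma1222} and the convergence $h_M \uparrow h$ from Lemma \ref{lemma12}. (Your invocation of $\mu_h$ and the mass distribution principle for the upper bound is superfluous --- the covering argument you then give does not use them, and neither does the paper.)
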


\begin{proof}
 For any $n\geq 1$, the set $\set{J_\gs : \gs \in D_n}$ is a covering of the set $E$, and so by Corollary~\ref{cor1},
\[\C H^h(E) \leq \liminf_{n\to \infty} \sum_{\gs \in D_n}|J_\gs|^h=\liminf_{n\to \infty} \sum_{\gs \in D_n}c_\gs^h=1<\infty,\]
which yields that dim$_{\te{H}}(E)\leq h$. For any $M\geq 2$, let $E_M$ be a Moran set generated by $\F F_M$ such that $E_M\sci E$, and so by monotonicity of Hausdorff dimension and Lemma~\ref{lemma1222}, we get $\text{dim}_{\te{H}}(E)\geq \te{dim}_{\te {H}}(E_M)=h_M.$ Take then $M\to \infty$ and use Lemma~\ref{lemma12}, and thus we obtain dim$_{\te{H}}(E)\geq  h$.
\end{proof}

From Proposition \ref{prop120} it follows that dim$_{\te{H}}(E) = h$, thus $h \le d$ since $E \subset \mathbb R^d$. \
We now study what happens if the Hausdorff dimension of $E$ is maximal, i.e equal to $d$.

\begin{prop} \label{prop121}   Let $h \in(\gq, +\infty)$ be the unique number for which $P(h)=0$, and assume that $h = d$, where $d$ is the dimension of the underlying space. Then $\C H^d(E)>0$.
\end{prop}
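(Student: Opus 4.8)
The plan is to apply the mass distribution principle of Proposition \ref{prop22} with $s=d$ to the probability measure $\mu_h$ furnished by Proposition \ref{prop32}. Since $P(h)=0$ with $h=d$, that proposition gives a Borel probability measure $\mu_h$ carried by $E$ with $\mu_h(J_\sigma)=c_\sigma^d=|J_\sigma|^d$ for every $\sigma\in D_k$ with $k>N_0$. The whole problem then reduces to producing a finite constant $c$ such that $\limsup_{r\to 0}\mu_h(B(x,r))/r^d\le c$ for all $x\in E$; once this is available, Proposition \ref{prop22} yields directly $\mathcal H^d(E)\ge \mu_h(E)/c>0$.

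To obtain such a $c$, I would compare $\mu_h$ with the $d$-dimensional Lebesgue measure $\mathcal L$. The key observation is that, since $|J|=1$ and $J_\sigma=S_\sigma(J)$ is similar to $J$ with ratio $c_\sigma=|J_\sigma|$, one has $\mathcal L(\te{int}(J_\sigma))=c_\sigma^d\,\mathcal L(\te{int}(J))$, so that for $|\sigma|>N_0$
\[
\mu_h(J_\sigma)=c_\sigma^d=\frac{\mathcal L(\te{int}(J_\sigma))}{\mathcal L(\te{int}(J))}.
\]
I claim this upgrades to the measure inequality $\mu_h(U)\le \mathcal L(U)/\mathcal L(\te{int}(J))$ for every open $U\ci\mathbb R^d$. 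Fix $U$ and consider the family $\mathcal G$ of basic sets $J_\sigma$ with $|\sigma|>N_0$ and $J_\sigma\ci U$. Because $\lim_{k}\sup_{\sigma\in D_k}|J_\sigma|=0$ and $U$ is open, every $x\in U\ii E$ lies in some member of $\mathcal G$ (take a sufficiently long truncation of a coding of $x$); hence the maximal elements of $\mathcal G$ — which have pairwise disjoint interiors by the tree structure of the construction and form a countable family — cover $U\ii E$. Summing over this maximal family $\{J_\tau\}$ and using that $\mu_h$ is carried by $E$,
\[
\mu_h(U)\le \sum_\tau \mu_h(J_\tau)=\sum_\tau c_\tau^d=\frac{1}{\mathcal L(\te{int}(J))}\sum_\tau \mathcal L(\te{int}(J_\tau))\le \frac{\mathcal L(U)}{\mathcal L(\te{int}(J))},
\]
the last step because the interiors $\te{int}(J_\tau)$ are disjoint and contained in $U$.

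Applying this to $U=B(x,r)$ gives $\mu_h(B(x,r))\le \mathcal L(B(x,r))/\mathcal L(\te{int}(J))=v_d\,r^d/\mathcal L(\te{int}(J))$, where $v_d$ is the volume of the unit ball in $\mathbb R^d$, so the hypothesis of Proposition \ref{prop22} holds for every $x$ with $c=v_d/\mathcal L(\te{int}(J))<\infty$. Therefore $\mathcal H^d(E)\ge \mu_h(E)/c=\mathcal L(\te{int}(J))/v_d>0$, as asserted.

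I expect the main obstacle to be precisely the covering step establishing $\mu_h(U)\le \mathcal L(U)/\mathcal L(\te{int}(J))$: the infinite branching at each node means there is no uniform ``stopping level,'' so the argument must genuinely exploit the tree order to extract the maximal disjoint-interior subfamily of $\mathcal G$, rather than truncating at a fixed depth. A secondary technical point, which I would treat as routine, is to confirm that $\mu_h$ assigns no mass to $\bar E\bs E$ (equivalently $\mu_h(E)=1$), so that the principle applies to $E$ itself and not merely to its closure.
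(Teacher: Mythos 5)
Your proof is correct, and it reaches the conclusion by a genuinely different (and arguably cleaner) organization of the underlying volume-comparison idea. The paper works pointwise at each ball $B(x,r)$: it forms the stopping-time family $\mathcal{C}$ of basic sets $J_\sigma$ meeting $B$ with $|J_\sigma|<r\le |J_{\sigma^-}|$, stratifies it dyadically into the classes $\mathcal{C}_k$, inscribes in each $J_\sigma\in\mathcal{C}_k$ a ball of radius $\gamma 2^{-k}r$, and uses disjointness of these balls inside $B(x,2r)$ to bound $\sum_k(\#\mathcal{C}_k)2^{-kd}$, from which $\mu_h(B(x,r))\le 4^d\gamma^{-d}r^d$ follows. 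You instead prove once and for all the measure domination $\mu_h(U)\le \mathcal{L}(U)/\mathcal{L}(\mathrm{int}(J))$ for every open $U$, via the maximal disjoint-interior subfamily of basic sets contained in $U$ and the exact identity $\mu_h(J_\sigma)=c_\sigma^d=\mathcal{L}(\mathrm{int}(J_\sigma))/\mathcal{L}(\mathrm{int}(J))$; specializing to $U=B(x,r)$ then gives the hypothesis of Proposition \ref{prop22} immediately. Both arguments rest on the same two facts (disjoint interiors of incomparable basic sets, and Lebesgue volume scaling as $c_\sigma^d$ under similarities), but your intermediate statement is stronger and more structural: it exhibits $\mu_h$ as dominated by a constant multiple of Lebesgue measure on all open sets, whereas the paper only extracts the pointwise ball estimate. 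The covering step you single out as the main obstacle is sound as you describe it (incomparable words give disjoint interiors because an open subset of $J_{\sigma}$ lies in $\mathrm{int}(J_{\sigma})$, and the tree order makes the maximal subfamily well defined and countable), and the one technical point you flag --- that $\mu_h$ gives full mass to $E$ rather than merely to $\bar E$, which is needed to conclude $\mathcal{H}^d(E)\ge\mu_h(E)/c>0$ --- is equally implicit in the paper's own proof, so you are if anything more careful than the source on that score.
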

\begin{proof}
Let $x \in J$ and choose $0<r<1$. Let us now consider the ball $B=B(x, r)$. Let
\[\C C=\set{J_\gs \in \F F : J_\gs \II B \neq \es \te{ and } |J_\gs|<r\leq |J_{\gs^-}|}, \
\text{and for} \ k\geq 1,\] \begin{equation} \label{eq234} \C C_k=\set{J_\gs \in \C C : 2^{-k}r\leq   |J_\gs|< 2^{1-k} r}\end{equation}
Then for all $k\geq 1$, $J_\gs$ belonging to $\C C_k$ is contained in a ball $B'$ concentric with $B$  and of radius $2r$. Moreover, elements of $\C C_k$ are disjoint and $\C C=\uu_{k=1}^\infty \C C_k$. Since int$(J)$ is nonempty, the set $J$ contains a ball of certain radius $\gg$, and so for geometric similarity each $J_\gs$ for $\gs\in \C C_k$ contains a ball of radius $\gg |J_\gs|\geq \gg 2^{-k} r$ and clearly this ball is contained in $B'$. Taking into account the disjointness of all such balls belonging to $\C C$, we have
\begin{equation} \label{eq235} \sum_k (\#\C C_k) (\gg 2^{-k} r)^d \leq (2r)^d \te{ which yields } \sum_k (\#\C C_k) 2^{-kd} \leq 2^d \gg^{-d}, \end{equation}
where $\# \C C_k$ is the cardinality of the set $\C C_k$ for all $k\geq 1$. \
Let $h\in (\gq, +\infty)$ given by $P(h)=0$ and assume $h = d$, the dimension of  underlying space. Then using \eqref{eq234} and \eqref{eq235}, we have
\[\mu_h(B(x, r)) \leq \sum_{\gs \in \C C} \mu_h(J_\gs)=\sum_k \sum_{\gs \in \C C_k} |J_\gs|^h< \sum_k (\#\C C_k)2^{-kh} 2^hr^h= \sum_k (\#\C C_k)2^{-kd} 2^hr^h, \ \text{hence}\]
\[\mu_h(B(x, r))< 2^d \gg^{-d} 2^hr^h \leq  4^d \gg^{-d} r^h, \ \text{and} \ \limsup_{r\to 0} \frac{\mu_h(B(x, r))}{r^h}\leq 4^d \gg^{-d}, \]
so by Proposition~\ref{prop22} and since $h = d$, we obtain $\C H^d(E) \geq \frac{1}{4^d \gg^{-d}}>0$. 
\end{proof}

\subsection*{Proof of Theorem~\ref{theorem}}

By Proposition~\ref{prop120} and Proposition~\ref{prop121}, the proof is complete.

$\hfill\square$

\begin{remark} \label{rem11}
For an infinitely generated Moran set with non-stationary coefficients $E$, Theorem~\ref{theorem} guarantees that, if $h\in (\gq, +\infty)$ is the unique number for which $P(h)=0$, then $\C H^h(E)<\infty$; and if $h = d$, then $\C H^h(E)>0$, where $d$ is the dimension of the underlying space. \
However if $h<d$, then it may happen that $\C H^h(E)$ is null. Then since an infinitely generated self-similar set is a special case of our infinitely generated Moran sets with non-stationary coefficients, we can use Example 2.6 in \cite{M} due to M. Moran.

$\hfill\square$
\end{remark}

 From Theorem \ref{theorem} above we know that the dimension of $E(\omega)$ depends only on the frequency vector $\eta$ and the contraction rates given by the infinite vectors $\Psi_i, 1 \le i \le m$, for any $\omega \in A^\mathbb N_\eta$.
We now prove that the Hausdorff dimension of $E(\omega), \ \omega \in A^\mathbb N_\eta$, depends \textbf{real analytically on the frequency vector} $\eta = (\eta_1, \ldots, \eta_m)$, when the contraction rates are fixed.

\begin{theorem}\label{real-an}
In the setting of Section 2 let us fix the infinite positive vectors $\Psi_i, 1 \le i \le m$ satisfying (\ref{gamma}), and denote by $\mathcal{V}$ the interior of the set $\{(\eta_1, \ldots, \eta_m) \in [0, 1]^m, \ \eta_1 + \ldots + \eta_m = 1\}$. Consider now $\eta = (\eta_1, \ldots, \eta_m) \in \mathcal{V} \subset \mathbb R^m$ and $\omega \in A^\mathbb N_\eta$, and let $E(\omega)$ be an associated Moran fractal with contraction rates given by $\{\Psi_i\}_{1 \le i \le m}$. Then the real-valued function   $\Delta: \mathcal{V} \to \mathbb R$, $$\Delta(\eta_1, \ldots, \eta_m):= \text{dim}_\text{H}(E(\omega)),$$ is real analytic on $\mathcal{V}$.
\end{theorem}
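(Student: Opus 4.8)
The plan is to realize the dimension $h=h(\eta)$ as an implicitly defined zero of the pressure function and then to invoke the real-analytic Implicit Function Theorem. Define $F\colon \mathcal V \times (\gq,+\infty)\to\D R$ by
\[
F(\eta,t)=\sum_{i=1}^m \eta_i\, g_i(t),\qquad g_i(t):=\log\sum_{j=1}^\infty \gamma_{ij}^{\,t}.
\]
By Lemma~\ref{lemma11} and Theorem~\ref{theorem}, for each $\eta\in\mathcal V$ there is a unique $h=h(\eta)\in(\gq,+\infty)$ with $F(\eta,h(\eta))=0$, and $\Delta(\eta)=\te{dim}_\te{H}(E(\go))=h(\eta)$. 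It therefore suffices to show that the map $\eta\mapsto h(\eta)$ is real analytic.

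First I would verify that $F$ is jointly real analytic on $\mathcal V\times(\gq,+\infty)$. The dependence on $\eta$ is affine, hence real analytic, so the content is the analyticity in $t$ of each $g_i$. For this I would pass to a complex variable and show that $z\mapsto \phi_i(z):=\sum_{j=1}^\infty \gamma_{ij}^{\,z}=\sum_{j=1}^\infty e^{z\log\gamma_{ij}}$ is holomorphic on the half-plane $\set{\te{Re}(z)>\gq}$. Each summand is entire, and since $0<\gamma_{ij}<1$ one has $|\gamma_{ij}^{\,z}|=\gamma_{ij}^{\,\te{Re}(z)}$; on any compact subset of the half-plane we have $\te{Re}(z)\geq\gq+\ep$ for some $\ep>0$, and $\sum_j\gamma_{ij}^{\,\gq+\ep}<\infty$ by the case $(C1)$ convention, so the series converges uniformly there by the Weierstrass $M$-test. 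By the Weierstrass convergence theorem $\phi_i$ is holomorphic; since $\phi_i>0$ on the real axis, it is nonvanishing in a complex neighborhood of $(\gq,+\infty)$, so $g_i=\log\phi_i$ is holomorphic there, giving real analyticity of $g_i$ on $(\gq,+\infty)$. Consequently $F$ is jointly real analytic.

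Next I would check the nondegeneracy hypothesis. The partial derivative in $t$ is
\[
\frac{\pa F}{\pa t}(\eta,t)=P'(t)=\sum_{i=1}^m \eta_i\,\frac{\sum_{j=1}^\infty \gamma_{ij}^{\,t}\log\gamma_{ij}}{\sum_{j=1}^\infty \gamma_{ij}^{\,t}}.
\]
Since every $\gamma_{ij}\in(0,1)$ forces $\log\gamma_{ij}<0$, and since $\eta_i>0$ on $\mathcal V$, this quantity is strictly negative; in particular $\pa F/\pa t\neq 0$ at $(\eta,h(\eta))$. This strict negativity is the quantitative refinement of the strict monotonicity of $P$ from Lemma~\ref{lemma1}. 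Parametrizing $\mathcal V$ by $(\eta_1,\ldots,\eta_{m-1})$ via $\eta_m=1-\sum_{i<m}\eta_i$, so that it is an open subset of $\D R^{m-1}$, I would then apply the real-analytic Implicit Function Theorem at an arbitrary $\eta^0\in\mathcal V$: it produces a real analytic solution $\eta\mapsto h(\eta)$ of $F(\eta,h(\eta))=0$ near $\eta^0$. By uniqueness of the zero (Lemma~\ref{lemma11}) this local solution coincides with the globally defined $h(\eta)$, whence $h$, and therefore $\Delta$, is real analytic on all of $\mathcal V$.

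The main obstacle is the joint real analyticity of $F$, concretely the holomorphic extension of the infinite sums $\sum_j\gamma_{ij}^{\,t}$ together with the uniform control needed to invoke the Weierstrass theorem; once that is in place, the rest is a direct appeal to the analytic Implicit Function Theorem combined with the sign computation for $P'$. One must also ensure the solution remains inside $(\gq,+\infty)$, which is guaranteed by Lemma~\ref{lemma11}, so that the series and their logarithms stay well defined throughout.
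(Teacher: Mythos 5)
Your proof is correct and follows essentially the same route as the paper: both realize $\Delta(\eta)$ as the implicit zero of the jointly real-analytic pressure $W(\eta,t)=\sum_{i=1}^m\eta_i\log\sum_{j\ge1}\gamma_{ij}^t$ and then apply the real-analytic Implicit Function Theorem, using uniqueness of the zero to globalize. Your two refinements --- deducing joint analyticity directly from the affine dependence on $\eta$ together with the holomorphic extension of $t\mapsto\sum_j\gamma_{ij}^t$ to a half-plane (where the paper instead invokes Browder's separate-analyticity-plus-local-boundedness theorem), and explicitly computing $\partial W/\partial t=\sum_i\eta_i\bigl(\sum_j\gamma_{ij}^t\log\gamma_{ij}\bigr)/\bigl(\sum_j\gamma_{ij}^t\bigr)<0$ (where the paper merely asserts $\partial W/\partial t\neq0$ from uniqueness of the zero) --- are both sound and, in the second case, supply a justification the paper leaves essentially implicit.
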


\begin{proof}

We fixed the contraction rates given by the infinite vectors $\{\Psi_i = (\gamma_{i1}, \gamma_{i2}, \ldots)\}_{1 \le i \le m}$. Then from Theorem \ref{theorem}, for any $\omega \in A^\mathbb N_\eta$,  it follows that the Hausdorff dimension of the fractal $E(\omega)$ is equal to the unique $t$  satisfying the equation 
\begin{align}
\eta_1 \log \mathop{\sum}\limits_{j \ge 1} \gamma_{1j}^t + \ldots + \eta_m \log \mathop{\sum}\limits_{j \ge 1} \gamma_{mj}^t = 0
\end{align}

Let us now consider the function $W: \mathcal{V} \times (\theta, \infty) \to \mathbb R$ given  by the formula
$$W(\eta_1, \ldots, \eta_m, t):= \eta_1 \cdot \log \mathop{\sum}\limits_{j \ge 1} \gamma_{1j}^t + \ldots + \eta_m \cdot \log \mathop{\sum}\limits_{j \ge 1} \gamma_{mj}^t
$$

Clearly $W(\cdot, t)$ is real-analytic in $(\eta_1, \ldots, \eta_m)$ for any fixed $t$, as it is linear in $\eta_1, \ldots, \eta_m$. Let us now fix $\eta_1, \ldots, \eta_m$ and consider the function $W(\eta_1, \ldots, \eta_m, \cdot)$. Locally near any $t \in (\theta, \infty)$ the series $\mathop{\sum}\limits_{j \ge 1} \gamma_{ij}^t$ is  uniformly convergent; thus it is a real-analytic function on $(\theta, \infty)$ as the limit of a uniformly convergent sequence of real analytic functions (it is also the restriction of a holomorphic map obtained as the limit of a sequence of holomorphic maps).

Now from the definition of $W$ and as $t \in (\theta, \infty)$,  it is easy to see that $W$ is locally bounded in $(\eta_1, \ldots, \eta_m, t)$. Hence since $W$ is separately real analytic and locally bounded, it follows from \cite{Br} (see also \cite{Kr}), that $W$ is real analytic \textit{jointly} in $(\eta_1, \ldots, \eta_m, t)$.

Now as we saw above, for any fixed $(\eta_1, \ldots, \eta_m) \in \mathcal{V}$, there exists a unique zero of the equation $W(\eta_1, \ldots, \eta_m, t) = 0$ and this zero is exactly $\Delta(\eta_1, \ldots, \eta_m)$. Hence $\frac{\partial W}{\partial t} \ne 0$.
Then $$W(\eta_1, \ldots, \eta_m, \Delta(\eta_1, \ldots, \eta_m)) = 0,$$ and from the Real Analytic Implicit Function Theorem (\cite{Kr}), we obtain that $\Delta(\eta_1, \ldots, \eta_m)$ depends real analytically on $(\eta_1, \ldots, \eta_m) \in \mathcal{V}$. 

\end{proof}

Real analyticity in higher dimension is a strong property and we can apply Lojaciewicz Vanishing Theorem (see \cite{Kr}) in order to control the \textbf{oscillation of Hausdorff dimension of} $E(\omega), \ \omega \in A^\mathbb N_\eta$, when varying the frequency vector $\eta$ (with contraction rates being fixed).  \ 

Thus from Theorem \ref{real-an} and the \textbf{Lojaciewicz Vanishing Theorem} we obtain the following estimation from below  of the dimension oscillation, in terms of distances to certain real-analytic subvarieties in $\mathbb R^m$:

\begin{corollary}\label{osc}

In the setting of Theorem \ref{real-an}, it follows that  for any compact set $K \subset \mathcal{V} \subset \mathbb R^m$ there exists a constant $C>0$ and an integer $q$ depending on $K$, such that for any $\eta = (\eta_1, \ldots, \eta_m) \in K$ and any $\omega \in A^\mathbb N_\eta$ we obtain:
\begin{align}\label{Z}
|\text{dim}_\text{H}(E(\omega)) - \text{dim}_\text{H}(E(\rho^0))| \ge C \cdot \text{dist}(\eta, Z_{\rho^0})^q, \ \text{where},
\end{align}
 $$Z_{\rho^0} := \{\omega' \in \mathcal{V}, \ \Delta(\omega') = \text{dim}_\text{H}(E(\rho^0)) \}$$ is a real-analytic subvariety in $\mathbb R^m$, for any fixed stochastic vector $\rho^0 \in \mathcal{V}$.
\end{corollary}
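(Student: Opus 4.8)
The plan is to derive the oscillation estimate as a direct application of the \L ojasiewicz Vanishing Theorem to the real-analytic dimension function $\Delta$ established in Theorem \ref{real-an}. First I would note that by Theorem \ref{theorem}, for any $\eta \in \mathcal{V}$ and any $\omega \in A^\mathbb N_\eta$, the dimension $\text{dim}_\text{H}(E(\omega))$ depends only on $\eta$ and equals $\Delta(\eta)$; hence the left-hand side of (\ref{Z}) can be rewritten purely in terms of the real-analytic function $\Delta$, namely as $|\Delta(\eta) - \Delta(\rho^0)|$. This reduces the statement to a quantitative vanishing estimate for the single real-analytic function $g(\eta) := \Delta(\eta) - \Delta(\rho^0)$ near its zero set on the compact set $K$.

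Next I would identify the zero set of $g$. By definition, $Z_{\rho^0} = \{ \omega' \in \mathcal{V} : \Delta(\omega') = \Delta(\rho^0) \}$ is precisely the vanishing locus of $g$, which is a real-analytic subvariety of $\mathcal{V} \subset \mathbb R^m$ since $\Delta$ is real analytic by Theorem \ref{real-an} (and the difference of a real-analytic function with a constant is real analytic). The \L ojasiewicz Vanishing Theorem (see \cite{Kr}) then applies: for a real-analytic function $g$ on an open set containing the compact $K$, with zero set $Z$, there exist a constant $C > 0$ and a positive integer $q$, both depending on $g$ and $K$, such that $|g(\eta)| \ge C \cdot \text{dist}(\eta, Z)^q$ for all $\eta \in K$. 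Substituting $g = \Delta - \Delta(\rho^0)$ and $Z = Z_{\rho^0}$ yields exactly (\ref{Z}).

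The main technical point to verify is that the constants $C$ and $q$ can indeed be chosen uniformly over the compact set $K$, which is precisely the content of the \L ojasiewicz inequality on compact sets; I would invoke the version of the theorem that already provides this uniformity over compacta. I would also remark that $Z_{\rho^0}$ is nonempty since $\rho^0 \in Z_{\rho^0}$, so the distance function is well defined. I do not expect any serious obstacle here, since all the analytic work was done in Theorem \ref{real-an}: once real-analyticity of $\Delta$ is in hand, the corollary is a formal consequence of \L ojasiewicz. The only care needed is bookkeeping, to ensure that the constant $C$ and exponent $q$ depend only on $K$ (and on the fixed contraction data $\{\Psi_i\}_{1 \le i \le m}$ through $\Delta$), and not on the particular stochastic vector $\rho^0$; this follows because $\Delta$ itself is fixed once the contraction vectors are fixed, and the \L ojasiewicz exponent for a fixed analytic function on a fixed compact set is uniform over the choice of regular value.
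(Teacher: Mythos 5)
Your proof is correct and follows exactly the paper's route: the paper derives this corollary with no separate argument beyond invoking Theorem \ref{real-an} together with the \L ojasiewicz Vanishing Theorem for the real-analytic function $\Delta - \Delta(\rho^0)$ on the compact set $K$, which is precisely what you do. The only caveat is your closing claim that $C$ and $q$ can be taken uniform over the choice of $\rho^0$: the standard \L ojasiewicz inequality fixes the analytic function (hence the level value) before producing the constants, so uniformity in $\rho^0$ would need a separate argument --- but the paper is no more precise on this point than you are.
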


\

\section{Fractals in $\mathbb R^d$ associated to $f$-expansions of real numbers.}

In the sequel we will apply our results in order to associate to (generic) numbers $x \in [0, 1)$, families of infinitely generated non-stationary Moran sets $E(x) \subset \mathbb R^d$ constructed according to \textbf{$f$-expansions} of $x$ (see for eg. \cite{Re}, \cite{A}, \cite{KP} for generalities on $f$-expansions).

\subsection{$m$-ary and beta expansions.}\label{mbeta}
We start with the well-known $m$-ary expansion, where $m \ge 2$ is an integer. In making the connection with our family of sets $\{E(\omega)\}_\omega$ constructed in Section 2, we use results about frequency of digits for the $m$-ary expansion.  
In order to detail the above construction let a  number $x \in [0, 1) \setminus \mathbb Q$; then $x$ has a unique \textbf{$m$-ary expansion}:
\begin{equation}\label{exp1}
x = \mathop{\sum}\limits_{k = 1}^\infty \frac{d_k(x)}{m^k}, \ \ d_k(x) \in \{0, 1, \ldots, m-1\}
\end{equation}

The coefficients $d_k(x)$ will also be denoted by $d_k$ when no confusion about $x$ is possible; they are called \textbf{the digits} of $x$ in the $m$-ary expansion, and are obtained from iterations of the piecewise linear map $T: [0, 1) \to [0, 1), \ 
T(x) = mx \ (\text{mod} \ 1), \ x \in [0, 1)$.
We have therefore $$x = \frac{d_1(x)}{m} + \frac{Tx}{m} = \frac{d_1(x)}{m} + \frac{d_2(x)}{m^2} + \ldots + \frac{d_k(x)}{m^k} + \frac{T^kx}{m^k} $$

It is known that the Lebesgue measure $\lambda$ is $T$-invariant, and that the random variables $X_k$ given by the digits $d_k(\cdot), \ k \ge 1$ are independent identically distributed with respect to the Lebesgue measure on $[0, 1)$ (see for instance \cite{DK}).
Therefore for any $j \in \{0, \ldots, m-1\}$ it follows from the Strong Law of Large Numbers that Lebesgue-a.e number $x \in [0, 1)$ is \textit{normal}, i.e that for Lebesgue-a.e $x \in [0, 1)$ and $ 0 \le j \le m-1$, $$\mathop{\lim}\limits_{n \to \infty}\frac 1n Card\{1 \le k \le n, d_k(x) = j\} = \mathop{\lim}\limits_{n \to \infty} \frac 1n \mathop{\sum}\limits_{i = 1}^n X_i(x) = \lambda(y \in [0, 1), d_1(y) = j) = \frac 1m$$  

Denote by $\mathcal{N}(m)$ the set of normal numbers in the $m$-ary expansion.
Now if $x \in \mathcal{N}(m)$ and $$\bar p := (\frac 1m, \ldots, \frac 1m) \ \text{and} \ \go(x):= (d_1(x), d_2(x), \ldots),$$ then it follows that $\go \in A^{\mathbb N}_{\bar p}$.   \
However there are also other real numbers $x \in [0, 1) \setminus \mathbb Q$ which are \textbf{not} normal, but for which the asymptotic frequency $p_j$ of digit $j$ exists, for every $0 \le j \le m-1$; we call such numbers $(p_0, \ldots, p_{m-1})$-quasinormal, and denote their set by 
$$F(p_0, \ldots, p_{m-1}):=
\Big \{ x = \mathop{\sum}\limits_{k \ge 1} \frac{d_k(x)}{m^k}, \ \frac{Card\{j, \ 1 \le j \le n, \ d_j(x) = i \}}{n} \mathop{\longrightarrow}\limits_{n \to \infty} p_i, \  0 \le i \le m-1\Big \}
$$
From \cite{Be} (for the dyadic case) and \cite{E} (for the general case), we have the dimension formula: $$\text{dim}_\text{H}(F(p_0, \ldots, p_{m-1})) = -\frac{1}{\log m} \mathop{\sum}\limits_{0 \le j \le m-1} p_j \log p_j$$ On the other hand the sets $F(p_1, \ldots, p_m)$ are dense in the unit interval, as can be seen since the frequencies are not affected by the first $n$ digits, for any fixed $n$. \ 
Denote next by $\mathcal{Q}_m$ the union of all sets $F(p_0, \ldots, p_{m-1})$ over all stochastic vectors $(p_0, \ldots, p_{m-1})$; a number from  $\mathcal{Q}_m$ is called \textit{quasinormal} in the $m$-ary expansion. 

Let us fix now a finite collection of infinite positive vectors $\Psi_i = (\gamma_{i1}, \gamma_{i2}, \ldots), 0 \le i \le m-1$, satisfying the conditions (\ref{gamma}) in the construction from Section 2, i.e $$\sum_{j=1}^{\infty}\gamma_{ij} < \infty, 1 \le i \le m \ \text{and} \ 
\sup\set{\gamma_{ij}, 1 \le i \le m,  j\in \D N}<1$$ 
Given the infinite vectors $\Psi_i, \ 0 \le i \le m-1$ as above, we obtain then a functional $E(\cdot; m)$ assigning to every $x \in \mathcal{Q}_m$, a Moran set $E(x; m) \subset \mathbb R^d$ with infinitely many generators and non-stationary contractions as in Section 2, by using the sequence of digits $\omega = (d_1(x), d_2(x), \ldots)$. \
We obtain then a function describing the Hausdorff dimensions of these Moran sets, $$H_m: \mathcal{Q}_m \to [0, \infty), \ H_m(x) := \text{dim}_\text{H}(E(x; m)), \ x \in \mathcal{Q}_m$$

 Another famous expansion where digits take finitely many values, is the \textbf{beta-expansion} for $\beta >1$, $\beta \notin \mathbb Z$. It was introduced by R\'enyi (\cite{Re}) and uses iterations of  the map $$T_\beta: [0, 1) \to [0, 1), \ T_\beta(x) = \beta x \ (\text{mod} \ 1)$$
In this case  the main generating equation is $\beta x = T_\beta(x) + [T_\beta(x)],$ \
where $[y]$ and $\{y\}:= y - [y]$,  denote respectively the \textit{integer part}, and the \textit{fractional part} of the real number $y$. Then we obtain the $\beta$-expansion of $x$, $$x = \frac{d_1(x)}{\beta} + \frac{d_2(x)}{\beta^2} + \ldots,$$ where the digits $d_i(x), i \ge 1$ may be denoted also by $d_i, i \ge 1$ and where $d_i \in \{0, \ldots, [\beta]\}, i \ge 1$.
However the map $T_\beta$ does not preserve Lebesgue measure on $[0, 1)$, unlike the map $T_m$ for $m \in \mathbb Z$. Nevertheless R\'enyi proved in \cite{ Re} that there exists a unique probability measure $\nu_\beta$ equivalent to the Lebesgue measure, and which is $T_\beta$-invariant. The probability $\nu_\beta$ is ergodic with respect to $T_\beta$ (see \cite{Re}, \cite{DK}). Moreover Parry (\cite{Pa}) gave an \textbf{explicit form} for the density function $h_\beta$ of $\nu_\beta$,  $h_\beta(x):= \frac{d\nu_\beta}{d\lambda}(x)$ for Lebesgue-a.e $x$,
\begin{equation}\label{Parry}
h_\beta(x) = \frac{1}{\mathcal{I}(\beta)}\mathop{\sum}\limits_{n \ge 0, \ x< T_\beta^n1} \frac{1}{\beta^n}, \ \text{and} \ \mathcal{I}(\beta) = \int\limits_0^1 \big(\sum\limits_{n \ge 0, \ x< T^n_\beta 1} \frac{1}{\beta^n}\big) \ d\lambda(x)
\end{equation}
Then from the Ergodic Theorem (\cite{Wa}), it follows that for $\nu_\beta$-a.e $x \in [0, 1)$ (hence for Lebesgue-a.e $x \in [0, 1)$) and for $j \in \{0, 1, \ldots, [\beta]\}$, we have:
\begin{equation}\label{beta}
\aligned \mathop{\lim}\limits_{n \to \infty} \frac 1n Card \{1 \le i \le n,  \ d_i = j\} &= \mathop{\lim}\limits_{n \to \infty} \frac 1n \mathop{\sum}\limits_{i= 0}^{n-1}
\chi_{[\frac{j}{\beta}, \frac{j+1}{\beta})}(T_\beta^i(x)) \\
& = \nu_\beta([\frac{j}{\beta}, \frac{j+1}{\beta})) = \eta_j
\endaligned
\end{equation}   
As before we denote the set of numbers $x \in [0, 1)$ for which the above holds, by $\mathcal{N}(\beta)$ and call it the set of \textit{normal} numbers for the $\beta$-expansion; $\mathcal{N}(\beta)$ has full Lebesgue measure in $[0, 1)$. Hence for $x \in \mathcal{N}(\beta)$, the sequence $\omega = (d_1(x), d_2(x), \ldots)$ given by the digits of $x$, belongs to $A^{\mathbb N}_{\eta}$, for the stochastic vector $\eta = (\eta_0, \ldots, \eta_{[\beta]})$ specified in (\ref{beta}). \

For a stochastic vector $(p_0, \ldots, p_{[\beta]})$ we define also the $(p_0, \ldots,  p_{[\beta]})$-\textit{quasinormal numbers}:
$$F(p_0, \ldots, p_{[\beta]}) := \{ x = \mathop{\sum}\limits_{k \ge 1} \frac{d_k(x)}{\beta^k} \ \in [0, 1), \ \frac{Card\{j, \ 1 \le j \le n, \ d_j(x) = i \}}{n} \mathop{\to}\limits_{n \to \infty} p_i, \  0 \le i \le [\beta]\}$$

Given a collection of infinite vectors $\Psi_i = (\gamma_{i1}, \gamma_{i2}, \ldots), 0 \le i \le [\beta]$ satisfying (\ref{gamma}) we can construct infinitely generated non-stationary Moran fractals $E(x; \beta) \subset \mathbb R^d$ for every $x \in F(p_0, \ldots, p_{[\beta]})$, thus obtaining a dimension function $$H_\beta: \mathcal{Q}_\beta \to [0, \infty), \  H_\beta(x) := \text{dim}_\text{H}(E(x; \beta))$$

We obtain now the dimension of $F(p_0, \ldots, p_{[\beta]})$  similarly as in case of the $m$-ary expansion.

\begin{lemma}\label{dim-beta}
Given $\beta >1, \beta \notin \mathbb Z$ and a stochastic vector $(p_0, \ldots, p_{[\beta]})$, it follows that the set of $(p_0, \ldots, p_{[\beta]})$-quasinormal numbers for the $\beta$-expansion, has Hausdorff dimension $$\text{dim}_\text{H}(F(p_0, \ldots, p_{[\beta]})) = - \frac{\mathop{\sum}\limits_{i = 0}^{[\beta]} p_i \log p_i}{\log \beta - p_{[\beta]} \log\{\beta\}}$$
\end{lemma}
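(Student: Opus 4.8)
I want to compute the Hausdorff dimension of the set of $(p_0,\ldots,p_{[\beta]})$-quasinormal numbers for the $\beta$-expansion. The formula to be established is
\[
\text{dim}_\text{H}(F(p_0,\ldots,p_{[\beta]})) = -\frac{\sum_{i=0}^{[\beta]} p_i\log p_i}{\log\beta - p_{[\beta]}\log\{\beta\}}.
\]
The structure of the answer suggests the denominator is a *weighted average of local expansion rates* of the map $T_\beta$ along orbits with the prescribed digit frequencies. This is the signature of a Billingsley-type / multifractal dimension formula, in which the dimension of a level set of a Birkhoff average equals an entropy divided by a Lyapunov exponent. So the plan is to realize $F(p_0,\ldots,p_{[\beta]})$ as a set of points whose $\beta$-expansion digit frequencies are prescribed, and apply the mass distribution principle with a suitably chosen Bernoulli-type measure.

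**The approach.** So what is there to prove:

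First I would fix the stochastic vector and construct a measure $\mu$ on $[0,1)$ modelled on the symbolic space of admissible $\beta$-expansion sequences, which assigns digit $i$ the asymptotic frequency $p_i$; concretely, $\mu$ should behave like a Bernoulli measure with weights $(p_0,\ldots,p_{[\beta]})$ on the digit alphabet $\{0,\ldots,[\beta]\}$, adapted to the admissibility (Parry) constraints. By the Shannon-McMillan-Breiman theorem applied to this measure, the measure of a cylinder $[d_1,\ldots,d_n]$ decays like $\exp(n\sum_i p_i\log p_i)$ for $\mu$-a.e.\ point, which produces the numerator $-\sum_i p_i\log p_i$ as the measure-theoretic entropy.

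Next I would control the *size* of the cylinders $[d_1,\ldots,d_n]$, i.e.\ the denominator. Unlike the $m$-ary case where every cylinder has length $m^{-n}$, in the $\beta$-expansion the length of a cylinder depends on whether the orbit ever enters the "exceptional" region near $1$ where the top digit $[\beta]$ occurs and the branch has width $\{\beta\} = \beta - [\beta]$ rather than $1$. Each time digit $[\beta]$ is used the local contraction factor is $\{\beta\}/\beta$ rather than $1/\beta$, so the logarithmic length of a typical cylinder is asymptotically $-n(\log\beta - p_{[\beta]}\log\{\beta\})$, by the Birkhoff/Ergodic Theorem applied along orbits with frequency $p_{[\beta]}$ of the top digit. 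This is exactly the Lyapunov exponent appearing in the denominator, and it is where the quantity $p_{[\beta]}\log\{\beta\}$ enters.

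**Completing the bound.** Combining the two asymptotics, for $\mu$-a.e.\ $x\in F(p_0,\ldots,p_{[\beta]})$ the local dimension
\[
\lim_{r\to 0}\frac{\log\mu(B(x,r))}{\log r}
\]
equals the claimed ratio. The lower bound on $\text{dim}_\text{H}$ then follows from the mass distribution principle (Proposition~\ref{prop22}), since $\mu$ is supported on $F(p_0,\ldots,p_{[\beta]})$ and gives it full measure; the upper bound follows from a direct covering of $F(p_0,\ldots,p_{[\beta]})$ by cylinders of generation $n$, counting the number of admissible digit strings with the given frequencies (a multinomial/Stirling estimate yielding the same entropy) and weighting each by its length to the power $s$.

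**Main obstacle.** The part I expect to be delicate is the denominator, specifically handling the admissibility constraints of the $\beta$-expansion cleanly. Because $\beta\notin\mathbb Z$, not every digit string is realizable, the top branch is shorter, and cylinder lengths are not simply multiplicative in a naive way; one must argue that the *frequency* of the short top-branch is precisely $p_{[\beta]}$ and that boundary/admissibility effects have vanishing frequency, so that the Lyapunov exponent is $\log\beta - p_{[\beta]}\log\{\beta\}$ as stated. I would lean on the ergodicity of $\nu_\beta$ and the Parry density (\ref{Parry}) to justify that the set of quasinormal points where these frequencies converge is exactly where the two averages stabilize, and I would cite the dimension formula for $m$-ary quasinormal sets as a template, adapting the Lyapunov exponent from the constant $\log m$ to the non-constant expansion rate of $T_\beta$.
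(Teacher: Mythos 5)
Your proposal follows essentially the same route as the paper: the paper also constructs the Bernoulli-type measure $\mu$ making the digits i.i.d.\ with weights $(p_0,\ldots,p_{[\beta]})$, shows via the Strong Law of Large Numbers that $\mu(F(p_0,\ldots,p_{[\beta]}))=1$, uses the cylinder-length formula $\lambda(I_{i_1\ldots i_k})=\{\beta\}^{n_{[\beta]}(x,k)}/\beta^k$ to get the Lyapunov exponent $\log\beta-p_{[\beta]}\log\{\beta\}$, and concludes by comparing $\mu(I_k(x))$ with $\lambda(I_k(x))^\delta$ and invoking the mass distribution principle. The admissibility issue you flag as the main obstacle is in fact passed over silently in the paper's proof, so your treatment is, if anything, the more candid one.
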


\begin{proof}
The idea is to use the mass distribution principle (see for eg. \cite{F}), in order to obtain a probability measure and then to compare it with Lebesgue measure on basic intervals.  
Let us denote by $I_0= [0, \frac 1\beta), \ldots, I_{[\beta]}= [\beta, 1)$ the basic intervals for the expansive map $T_\beta$, and let $I_{i_1 \ldots i_k}:= \{x \in I_{i_1}, T^j(x) \in I_{i_j}, 1 \le j \le k\}, \ k \ge 1$. 

For a given $j \in \{0, \ldots, [\beta]\}$, let us define the random variables $X_i, i \ge 1$ defined by  
$$
X_i(x) = \left\{\begin{array}{ll}
                     1, \ \text{if} \ d_i(x) = j,  \\
                     0, \ \text{if} \ d_i(x) \neq j
                     \end{array}
           \right.
$$
We will take now a probability measure $\mu$ which makes the random variables $X_i, i \ge 1$ independent. This is defined such that $\nu(I_{i_1\ldots i_k}) =  p_{i_1} \ldots p_{i_k}$ for any $i_1, \ldots, i_k$ and  $k \ge 1 $. For this probability we thus have a sequence of independent identically distributed random variables $X_i, i \ge 1$. From the Strong Law of Large Numbers it follows as in \cite{F} that,  if we denote by $n_j(x, k)$ the number of occurences of the digit $j$ in the first $k$ digits of $x$, then $$\mathop{\lim}\limits_{k \to \infty} \frac{n_j(x, k)}{k} = p_j, \ j = 0, \ldots, [\beta]$$
Hence we see that $\mu$ gives measure 1 to the set $F(p_0, \ldots, p_{[\beta]})$. Now let us  notice that the length of the interval $I_{i_1 \ldots i_k}$ is given by: $\lambda(I_{i_1 \ldots i_k}) = \frac{\{\beta\}^{n_{[\beta]}(x, k)}}{\beta^k}$.
Then, if we denote the interval $I_{i_1 \ldots i_k}$ containing $x$ by $I_k(x)$, we have  for any $\delta >0$,
$$\frac 1k \log \frac{P(I_k(x))}{\lambda(I_k(x))^\delta} \mathop{\to}\limits_{k \to \infty} \mathop{\sum}\limits_{0 \le i \le [\beta]} p_i \log p_i + \delta (\log \beta - p_{[\beta]} \log \{\beta\})$$
So from the  mass distribution principle (\cite{F}, \cite{Ma}) it follows that the Hausdorff dimension is: \ $\text{dim}_\text{H}(F(p_0, \ldots, p_{[\beta]})) = - \frac{\mathop{\sum}\limits_{0 \le i \le [\beta]} p_i \log p_i}{\log \beta - p_{[\beta]} \log \{\beta\}}$.

\end{proof}

We obtain then the following result about the strange behaviour of $H_m$ on the set of quasinormal numbers, saying that all possible values of $H_m$ are attained in every small subinterval; the same results are obtained also for $H_\beta$:

\begin{theorem}\label{thm2}
Let an integer $m \ge 2$  and a collection of infinite vectors $\Psi_i = (\gamma_{i1}, \ldots), 0 \le i \le m-1$ satisfying  (\ref{gamma}), and consider the dimension function $H_m(\cdot)$ defined above for the associated infinitely generated non-stationary Moran fractals $E(\cdot; m) \subset \mathbb R^d$ . 

Then, $H_m$ is measurable, everywhere discontinuous on $\mathcal{Q}_m$,  and it is constant on every set of positive dimension $F(p_0, \ldots, p_{m-1})$. Moreover for any $x \in \mathcal{Q}_m$ and any neighbourhood $U$ of $x$, it follows that $H_m$ attains all its possible values inside $U$, i.e $$H_m(\mathcal{Q}_m) = H_m(U \cap \mathcal{Q}_m)$$

The same conclusions hold, for the function $H_\beta$ associated to $\beta >1, \beta \notin \mathbb Z$ and to a fixed collection of infinite vectors $\Psi_i, 0 \le i \le [\beta]$ satisfying (\ref{gamma}). 

\end{theorem}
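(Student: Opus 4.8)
The plan is to reduce all four assertions to the single fact, supplied by Theorem \ref{theorem}, that the Hausdorff dimension of $E(x;m)$ depends on $x$ \emph{only through the asymptotic frequency vector} of its digits. Write $\eta(x) = (p_0(x), \ldots, p_{m-1}(x))$ for this frequency vector and let $\Delta$ be the dimension function of Theorem \ref{real-an}, so that $H_m(x) = \Delta(\eta(x))$, where $\Delta$ is continuous (indeed real-analytic on the interior of the simplex). The constancy of $H_m$ on each $F(p_0, \ldots, p_{m-1})$ is then immediate: every $x$ in this set has the same frequency vector, so $H_m$ is constant there.

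For measurability, I would note that each digit $d_k(\cdot)$ is a Borel function, being constant on the $m$-adic cylinders, hence so is each partial average $x \mapsto \frac1n \sum_{k=1}^n \mathbf{1}[d_k(x) = i]$. The set where these averages converge is Borel, and on $\mathcal{Q}_m$ the limit exists and equals $p_i(x)$; thus $\eta$ is Borel measurable, and $H_m = \Delta \circ \eta$ is measurable as the composition of a measurable map with a continuous one.

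The heart of the theorem is the identity $H_m(\mathcal{Q}_m) = H_m(U \cap \mathcal{Q}_m)$, and the key structural observation is that the frequency vector is \emph{insensitive to any finite prefix} of the expansion; consequently each $F(q_0, \ldots, q_{m-1})$ is dense in $[0,1)$. Concretely, given $x \in \mathcal{Q}_m$ and a neighbourhood $U$, I would choose $n$ so large that the $m$-adic cylinder $I_{d_1(x) \cdots d_n(x)}$, an interval of length $m^{-n}$ containing $x$, lies inside $U$. For any target stochastic vector $q$ realized in $\mathcal{Q}_m$, a number $y$ whose first $n$ digits agree with those of $x$ and whose remaining digits are arranged to have asymptotic frequency $q$ lies in this cylinder, hence in $U$, and satisfies $H_m(y) = \Delta(q)$. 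Since $q$ is arbitrary, every value in the range of $H_m$ is attained inside $U$, which is the claimed equality. Everywhere-discontinuity then follows formally: if $H_m$ were continuous at some $x$, then, as it attains its entire range in every neighbourhood of $x$, that range would collapse to $\{H_m(x)\}$; so whenever $H_m$ is non-constant (i.e. the $\Psi_i$ are chosen so that $\Delta$ is non-constant) it is discontinuous at every point.

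Finally, the $\beta$-expansion statement follows the same three steps, with Lemma \ref{dim-beta} replacing the entropy formula of the $m$-ary case. I expect the \textbf{main obstacle} to lie precisely here: unlike the $m$-ary case, not every digit string is a legitimate $\beta$-expansion, since admissible sequences must obey Parry's lexicographic condition, so the density of each $F(q_0, \ldots, q_{[\beta]})$ — the one ingredient the neighbourhood argument genuinely needs — must be re-established with admissibility in mind. I would do this by invoking Lemma \ref{dim-beta} to guarantee that $F(q_0, \ldots, q_{[\beta]})$ is nonempty of positive dimension, and then concatenating a prescribed admissible prefix with such a frequency-$q$ tail, inserting a sufficiently long block of the digit $0$ (always admissible, and harmless for asymptotic frequencies) at the junction to ensure Parry's condition holds throughout. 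With density in hand, the remaining arguments transfer verbatim.
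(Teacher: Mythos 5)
Your proposal is correct and follows essentially the same route as the paper: constancy of $H_m$ on each $F(p_0,\ldots,p_{m-1})$ via Theorem \ref{theorem}, density of these sets from the prefix-insensitivity of digit frequencies (the paper cites the argument of Proposition 10.1 in \cite{F}), hence the full range of $H_m$ inside every neighbourhood, with measurability coming from the Borel dependence of the frequencies on $x$. Your treatment of the $\beta$-case is in fact more careful than the paper's, which transfers the argument without addressing Parry admissibility; your zero-block buffer at the junction is the right fix for that gap.
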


\begin{proof}

We use Theorem \ref{theorem} to obtain that $H_m(x)$ is the unique zero of the respective pressure function $P(t) = \mathop{\sum}\limits_{i=0}^{m-1} p_i \log \mathop{\sum}\limits_{j \ge 1} \gamma_{ij}^t$, whenever $x \in F(p_0, \ldots, p_{m-1})$.
Hence $H_m$ is constant on the fractal set $F(p_0, \ldots, p_{m-1})$ which,  from the above discussion has positive Hausdorff dimension, which is in fact equal to $-\frac{1}{\log m} \mathop{\sum}\limits_{ j =0}^{m-1} p_j \log p_j >0$.

Moreover from the proof of Proposition 10.1 of \cite{F} it follows that the set $F(p_0, \ldots, p_{m-1})$ is dense in $[0, 1)$, since given any interval $I_{j_1 \ldots j_r}$, we can take a subinterval $I_{j_1 \ldots j_r j_{r+1} \ldots j_n}$ which contains an $r$-preimage $z$ of a point from $F(p_0, \ldots, p_{m-1})$; thus also $z \in F(p_0, \ldots, p_{m-1})$ since the frequency of appearance of $i$ among digits,  is not affected by the first $r$ digits. 

 This means that given any small neighbourhood $U$ of $x \in \mathcal{Q}_m$ and any stochastic vector $(p_0, \ldots, p_{m-1})$, there exist points from $U \cap F(p_0, \ldots, p_{m-1})$, hence all the possible values of $H_m$ are attained inside $U$. \
The  measurability of $H_m$ follows from the measurable dependence of the frequencies of digits on $x$, and from the formula for dimension in Theorem \ref{theorem}. \

For the function $H_\beta$ associated to $\beta >1, \beta \notin \mathbb Z$, we can  do the same argument as above, by using the positivity of dimension for $F(p_0, \ldots, p_{[\beta]})$, proved in Lemma \ref{dim-beta}.

\end{proof}

For the above vectors $\Psi_i, 0 \le i \le m-1$, consider the function $H_m(\cdot)$ on $\mathcal{Q}_m$. We obtain then, from Theorem \ref{thm2} and Corollary \ref{osc}, the following result about the sets of frequencies associated to the level sets of $H_m$, and  about the oscillation of $H_m$: \ if $H_m(x) = H_m(y)$ for numbers $x, y \in \mathcal{Q}_m$, then the frequency vector of $y$ belongs to a \textbf{real-analytic subvariety} $Z_x \subset \mathbb R^m$, which depends on $x$.

\begin{corollary}\label{H-osc}
 In the setting of Theorem \ref{thm2} and Corollary \ref{osc}, it follows that for any compact set of frequencies $K \subset \mathcal{V} \subset \mathbb R^m$, there exists a constant $C(K)>0$ and an integer $q(K)$ such that, if $x \in \mathcal{Q}_m, y \in F(\rho_0, \ldots, \rho_{m-1}) \subset \mathcal{Q}_m$ and  $(\rho_0, \ldots, \rho_{m-1}) \in K$, then: 
$$
| H_m(y) - H_m(x)| \ge C(K) \cdot dist((\rho_0, \ldots, \rho_{m-1}), Z_x)^{q(K)},
$$
where $Z_x = \{\eta \in \mathcal{V}, \Delta(\eta) = H_m(x)\}$ is a real-analytic subvariety in $\mathbb R^m$. 

In particular, if $H_m(y) = H_m(x)$ and if $y \in F(\rho_0, \ldots, \rho_{m-1})$, then the frequency vector $(\rho_0, \ldots, \rho_{m-1})$ must belong to the real-analytic subvariety  $Z_x \subset \mathbb R^m$.  \
 
The same conclusions hold for $H_\beta(\cdot)$, with $ \beta >1, \beta \notin \mathbb Z$.

\end{corollary}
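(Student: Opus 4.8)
The plan is to reduce the statement entirely to the real-analytic function $\Delta$ of Theorem \ref{real-an} and to the Lojasiewicz-type inequality of Corollary \ref{osc}, the only genuinely new ingredient being the identification of $H_m$ with $\Delta$ composed with the digit-frequency map. First I would record the key observation: if $x \in \mathcal{Q}_m$ lies in $F(p_0, \ldots, p_{m-1})$, then its frequency vector $\eta(x) := (p_0, \ldots, p_{m-1})$ has strictly positive entries, so $\eta(x) \in \mathcal{V}$; moreover $E(x; m) = E(\omega(x))$ for the digit sequence $\omega(x) = (d_1(x), d_2(x), \ldots) \in A^{\mathbb N}_{\eta(x)}$. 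Hence by Theorem \ref{theorem} the dimension $H_m(x) = \text{dim}_\text{H}(E(x; m))$ depends only on $\eta(x)$, and in fact $H_m(x) = \Delta(\eta(x))$.

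With this in hand I would fix $x \in \mathcal{Q}_m$ and set $\rho^0 := \eta(x)$, so that $Z_x = \{\eta \in \mathcal{V} : \Delta(\eta) = H_m(x)\}$ coincides with the real-analytic subvariety $Z_{\rho^0}$ of Corollary \ref{osc}. For any $y \in F(\rho_0, \ldots, \rho_{m-1}) \subset \mathcal{Q}_m$ with $(\rho_0, \ldots, \rho_{m-1}) \in K$, writing $\eta(y) = (\rho_0, \ldots, \rho_{m-1})$ and $\omega(y) = (d_1(y), d_2(y), \ldots) \in A^{\mathbb N}_{\eta(y)}$, the same identification gives $H_m(y) = \Delta(\eta(y)) = \text{dim}_\text{H}(E(\omega(y)))$. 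Applying Corollary \ref{osc} to the compact set $K$, with this choice of $\rho^0$ and the sequence $\omega = \omega(y)$, then yields directly
$$|H_m(y) - H_m(x)| = |\text{dim}_\text{H}(E(\omega(y))) - \text{dim}_\text{H}(E(\rho^0))| \ge C(K) \cdot \text{dist}(\eta(y), Z_x)^{q(K)},$$
which is the asserted inequality.

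The ``in particular'' clause follows by specialization: if $H_m(y) = H_m(x)$, the left side above vanishes, and since $C(K) > 0$ this forces $\text{dist}((\rho_0, \ldots, \rho_{m-1}), Z_x) = 0$. Because $Z_x$ is the level set $\Delta^{-1}(H_m(x))$ of a continuous (indeed real-analytic) function and $\eta(y) \in \mathcal{V}$, it is relatively closed, so vanishing distance gives $\eta(y) \in Z_x$; that is, the frequency vector of $y$ lies on the real-analytic subvariety $Z_x$. The $H_\beta$ case is handled by the identical argument: Theorem \ref{real-an} and Corollary \ref{osc} hold verbatim over the open simplex $\mathcal{V} \subset \mathbb R^{[\beta]+1}$, the dimension $\text{dim}_\text{H}(E(x; \beta))$ depends (by Theorem \ref{theorem}) only on the frequency vector of the $\beta$-digits of $x$, and quasinormal numbers for the $\beta$-expansion have frequency vectors with strictly positive entries.

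I expect the only delicate point to be the uniformity of the constants $C(K), q(K)$ in the base point $x$: a priori the Lojasiewicz exponent and constant attached to the level set $Z_x = \Delta^{-1}(H_m(x))$ could vary with the level value $H_m(x)$, so to obtain constants depending on $K$ alone one must invoke Corollary \ref{osc} in the uniform form in which it is stated. If instead one allows the constants to depend on $x$ through $\rho^0$, the argument is a pure translation requiring no further computation.
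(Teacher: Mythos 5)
Your proposal is correct and follows essentially the same route the paper intends: the paper derives Corollary \ref{H-osc} directly from Theorem \ref{thm2} (which gives $H_m(y)=\Delta(\eta(y))$, i.e.\ $H_m$ factors through the frequency vector) together with the Lojasiewicz inequality of Corollary \ref{osc} applied with $\rho^0=\eta(x)$, exactly as you do. Your closing remark about the uniformity of $C(K), q(K)$ in the base point $x$ is a fair observation --- the paper itself does not elaborate beyond noting that the constants depend on $K$ --- but it does not affect the correctness of your reduction.
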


Let us notice that by taking $(\rho_0, \ldots, \rho_{m-1})$ in a fixed compact set $K$, we take only a part of  the possible values for the  frequency vectors; also the constants $C(K), q(K)$ depend on $K$.  The proof for $H_\beta, \beta >1, \beta \notin \mathbb Z$, is similar to the case of $H_m$, $m\ge 2$ integer.

\

\subsection{Fractals $\tilde E_m(x), \tilde E_\beta(x)$ with contraction vectors $\Psi_i(x)$ depending on $x$.}\label{tildeE} 

\

We define now \textbf{another class} of infinitely generated Moran fractals, $\tilde E_m(x)$ whose non-stationary contraction rates $\Psi_i(x)$ are not fixed, and depend on the digits in the $m$-ary expansion of $x$. This is a natural construction, where infinitely many non-stationary contraction rates are needed in order to obtain fractals that intimately  \textbf{depend on all digits of $x$}. It works for $f$-expansions with finitely many digit values, like $m$-ary expansions, $\beta$-expansions, the Bolyai-Renyi expansion, etc. However for $m$-ary and beta-expansions we have explicit formulas for  the absolutely continuous invariant measures and for dimensions of sets $F(\eta)$.

Let us take then an integer $m \ge 2$, an arbitrary stochastic vector $(p_0, \ldots, p_{m-1})$ and the set $F(p_0, \ldots, p_{m-1})$ of $(p_0, \ldots, p_{m-1})$-quasinormal numbers. Consider an arbitrary real number $x \in F(p_0, \ldots, p_{m-1})$, and define the infinite contraction vectors depending on $x$, $$\Psi_i(x) = (\gamma_{i1}(x), \gamma_{i2}(x), \ldots), \ i = 0 , \ldots, m-1,$$ where $\gamma_{ij}(x)$ are given for all  $j \ge 1, 0 \le i \le m-1$, by: 
\begin{equation}\label{gammaij}
\gamma_{ij}(x):= 2^{- 1 - Card\{k, \ 1 \le k \le j, \ d_k(x) = i\}}, \  
\end{equation}
where $d_k(x) $ are the digits of $x$ in its representation in base $m \ge 2$. 

Without loss of generality we will assume that all frequencies $p_i, 0 \le i \le m-1$ are positive (otherwise, the respective digits do not matter in the dimension formula anyway).

Then, using our method in Section 2 we construct a fractal set $\tilde E_m(x)$ corresponding to the sequence of digits of $x$;  this fractal has non-stationary contraction rates given by the contraction vectors $\Psi_i(x), 1 \le i \le m$, and frequency vector $(p_0, \ldots, p_{m-1})$. 

We see in the next Corollary that the \textbf{behavior of the dimensions} of the fractals $\tilde E_m(x)$, is \textbf{different} than in the case of the fractals of type $E(x; m)$; now these dimensions do not remain constant even when $x$ varies in a set $F(\eta)$, with $\eta$ fixed.

\begin{corollary}\label{cortildeE}
 \ a) Let an integer $m \ge 2$ and an arbitrary stochastic vector $\eta = (p_0, \ldots, p_{m-1})$ so that $p_i >0, 0 \le i \le m-1$. Consider the set $F(p_0, \ldots, p_{m-1})$ of $(p_0, \ldots, p_{m-1})$-quasinormal numbers, let a number $x \in F(p_0, \ldots, p_{m-1})$, and construct the infinitely generated non-stationary fractal $\tilde E_m(x)$. Then the function $\delta_{m, \eta}: F(\eta) \to [0, \infty)$ given by:
$$\delta_{m, \eta}(x) := \text{dim}_{\text{H}}(\tilde E_m(x)), \ x \in F(\eta)$$
is continuous and, for every $x \in F(\eta)$ we have 
$$\mathop{\sum}\limits_{i = 0}^{m-1} p_i \cdot \log \mathop{\sum}\limits_{j \ge 1}   
2^{- (1+ Card\{k, \ 1 \le k \le j, \ d_k(x) = i\})  \cdot \delta_{m, \eta}(x)} = 0$$
This applies in particular for normal numbers $x\in \mathcal{N}(m)$, with their frequencies $(\frac 1m, \ldots, \frac 1m)$.

\ b) Similar result holds also for $\beta$-expansions, $\beta >1, \beta \notin \mathbb Z$.
In particular it applies  to normal numbers $x \in \mathcal{N}(\beta)$, with their frequencies  $(\eta_0, \ldots, \eta_{[\beta]})$ given by (\ref{Parry}) and (\ref{beta}).
 \end{corollary}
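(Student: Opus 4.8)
The plan is to apply Theorem \ref{theorem} directly to the fractal $\tilde E_m(x)$, which is constructed exactly by the procedure of Section 2 but with the specific choice of contraction vectors $\Psi_i(x)$ given by (\ref{gammaij}). The first thing I would verify is that these contraction vectors satisfy the admissibility conditions (\ref{gamma}). Indeed, each $\gamma_{ij}(x) = 2^{-1 - Card\{k, 1\le k \le j, d_k(x)=i\}} \le \frac 12 <1$, so the supremum condition holds; and for fixed $i$, the exponents $1 + Card\{k\le j, d_k(x)=i\}$ are strictly increasing in $j$ whenever $d_j(x)=i$ and constant otherwise, which makes $\sum_j \gamma_{ij}(x)$ a convergent (indeed geometric-like) series. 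Once (\ref{gamma}) is checked, Theorem \ref{theorem} applies verbatim and tells us that $\text{dim}_\text{H}(\tilde E_m(x))$ equals the unique zero $h$ of the pressure function associated to the frequency vector $\eta = (p_0, \ldots, p_{m-1})$ and the vectors $\Psi_i(x)$. Substituting the explicit form of $\gamma_{ij}(x)$ into the defining equation $P(h)=0$, namely $\sum_{i=0}^{m-1} p_i \log \sum_{j\ge 1} \gamma_{ij}(x)^t = 0$, yields precisely the implicit equation for $\delta_{m,\eta}(x)$ stated in the Corollary.

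The second and more delicate part is the \textbf{continuity} of $\delta_{m,\eta}$ on $F(\eta)$. The natural approach is again an implicit-function argument, but carried out with care because the coefficients now depend on $x$ through countably many digits. I would define $W(x,t) := \sum_{i=0}^{m-1} p_i \log \sum_{j\ge 1} \gamma_{ij}(x)^t$ and show first that, for each fixed $x$, the map $t \mapsto W(x,t)$ is strictly decreasing and continuous (this is exactly Lemma \ref{lemma1}, which applies since the structural hypotheses on the $\gamma_{ij}$ are met). Strict monotonicity guarantees $\partial W/\partial t \ne 0$ at the zero, so the unique root $\delta_{m,\eta}(x)$ is isolated and transversal. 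For continuity in $x$, I would fix $x_0 \in F(\eta)$ and a small interval $[\,\delta_{m,\eta}(x_0)-\epsilon, \delta_{m,\eta}(x_0)+\epsilon\,]$; since $P(\cdot)$ is strictly decreasing, $W(x_0, \cdot)$ takes values of opposite sign at the endpoints. If $y$ is a quasinormal number close to $x_0$, then $y$ agrees with $x_0$ in its first $N$ digits for $N$ large, so the partial sums $\sum_{j\le N} \gamma_{ij}(y)^t$ coincide with those of $x_0$, while the tails are uniformly small by the convergence in (\ref{gamma}); this forces $W(y,\cdot)$ to remain of opposite sign at the two endpoints, so its root $\delta_{m,\eta}(y)$ lies within $\epsilon$ of $\delta_{m,\eta}(x_0)$.

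The \textbf{main obstacle} I anticipate is making this last continuity argument uniform: the dependence of $W(x,t)$ on $x$ is not through a finite-dimensional parameter but through the entire digit sequence, and proximity of $x$ to $x_0$ in $[0,1)$ controls only an \emph{initial} block of digits rather than the full sequence. The key technical point is therefore to bound the tail $\sum_{j > N} \gamma_{ij}(x)^t$ \emph{uniformly} over all $x$ and over $t$ in a compact neighbourhood of $\delta_{m,\eta}(x_0)$; this follows because for $t$ bounded below away from $\theta$ the geometric decay of $\gamma_{ij}(x)$ gives a summable tail estimate independent of $x$, but the estimate must be carried out explicitly. Once the tails are controlled uniformly, the sign-preservation argument closes and continuity follows.

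For part (b), I would observe that the $\beta$-expansion has finitely many digit values $\{0, \ldots, [\beta]\}$ and admits, by R\'enyi's and Parry's results (\ref{Parry}) and (\ref{beta}), a set $\mathcal{N}(\beta)$ of normal numbers of full Lebesgue measure with a well-defined frequency vector $(\eta_0, \ldots, \eta_{[\beta]})$. Defining $\gamma_{ij}(x)$ by the same formula (\ref{gammaij}) but with $d_k(x)$ now the $\beta$-expansion digits, the construction of Section 2 applies without change, and the identical implicit-function and digit-agreement arguments give continuity of the dimension function and the analogous implicit equation for $\delta_{\beta,\eta}(x)$; the only modification is replacing the index range and the frequency vector by those of the $\beta$-expansion.
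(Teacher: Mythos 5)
Your overall strategy coincides with the paper's: verify that the $x$-dependent rates $\gamma_{ij}(x)$ satisfy the admissibility conditions (\ref{gamma}), invoke Theorem \ref{theorem} to identify $\text{dim}_{\text{H}}(\tilde E_m(x))$ with the zero of the pressure function, substitute the explicit form of $\gamma_{ij}(x)$ to get the implicit equation, and prove continuity via agreement of initial digits plus smallness of tails. However, your justification of the summability $\sum_{j\ge 1}\gamma_{ij}(x)<\infty$ is not correct as written. You argue that the exponents $1+\mathrm{Card}\{k\le j,\ d_k(x)=i\}$ are non-decreasing in $j$, "which makes the series geometric-like and convergent." Monotonicity of the exponent does not suffice: if the digit $i$ appeared, say, only at positions $j=2^n$, then $\mathrm{Card}\{k\le j,\ d_k(x)=i\}\approx \log_2 j$ and $\gamma_{ij}(x)\approx \frac{1}{2j}$, whose sum diverges. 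The hypothesis $p_i>0$ is exactly what rescues the argument, and it is how the paper proceeds: since $x\in F(p_0,\ldots,p_{m-1})$ with all $p_i>0$, one has $R_i(x,j):=\mathrm{Card}\{k\le j,\ d_k(x)=i\}>\alpha j$ for all $j$ beyond some $N$ and some $\alpha>0$, whence $\gamma_{ij}(x)<2^{-\alpha j}$ for $j>N$ and the series converges by comparison with a genuine geometric series. You never use the positivity of the frequencies at this step, so this part of your proof needs to be repaired along those lines.

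On the continuity argument, you and the paper run essentially the same digit-agreement-plus-tail argument, and you correctly identify the delicate point (the tails $\sum_{j>N}\gamma_{ij}(y)^t$ must be controlled uniformly over $y$ near $x_0$, not just for each fixed $y$). Your proposed resolution, that "geometric decay of $\gamma_{ij}(x)$ gives a summable tail estimate independent of $x$," is asserted rather than proved: proximity of $y$ to $x_0$ controls only an initial block of digits, and the decay rate of $\gamma_{ij}(y)$ in $j$ beyond that block depends on how often the digit $i$ occurs in the uncontrolled part of $y$. The paper's own proof is equally terse here (it simply states that the tail series are $\varepsilon$-small), so on this point your write-up is at the same level of detail as the original; but if you want a complete argument you would need to make the tail bound uniform over the relevant class of $y$, which requires more than digit agreement on an initial segment.
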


\begin{proof}

\ a) We assume that the frequencies of the digits of $x$ are given by the stochastic vector $\eta = (p_0, \ldots, p_{m-1})$. Let us also denote by $$R_i(x, j):= Card\{k, \ 1 \le k \le j, \ d_k(x) = i \}, \ 0 \le i \le m-1, j \ge 1$$  Then since $x \in F(p_0, \ldots, p_{m-1})$, it follows that $$\lim_{j \to \infty} \frac{R_i(x, j)}{j} = p_i >0, \ \text{for} \ 0 \le i \le m-1$$ 
So there exists an integer $N$ and some $\alpha >0$ such that for $j >N$ and $ 0 \le i \le m-1$ we have $R_i(x, j) > \alpha \cdot j$. 
This implies then that $\gamma_{ij}(x) < 2^{-\alpha j}, \ 0 \le i \le m-1, j >N$.
Hence for any $0 \le i \le m-1$ we obtain, $$\mathop{\sum}\limits_{j \ge 1} \gamma_{ij} < \mathop{\sum}\limits_{1 \le j \le N} \gamma_{ij} + \mathop{\sum}\limits_{j > N} (\frac{1}{2^\alpha})^j < \infty $$
Also from its definition above, we have that $\gamma_{ij}(x) \le \frac 12, \  i, j \ge 1$. Therefore the contraction rates $\gamma_{ij}$ satisfy the conditions (\ref{gamma}). Then,  by the method in Section 2, we can construct the fractal $\tilde E_x(m)$ with non-stationary contraction rates $\gamma_{ij}(x)$ and frequency vector $(p_0, \ldots, p_{m-1})$.

By Theorem \ref{theorem} we know that the Haussdorff dimension of $\tilde E_x(m)$ is given as the zero of the pressure function and thus we have the formula:
$$\mathop{\sum}\limits_{i = 0}^{m-1} p_i \cdot \log \mathop{\sum}\limits_{j \ge 1}   
2^{- \delta_{m, \eta}(x) \cdot (1+R_i(x, j))} = 0$$

One notices that the dimension function $\text{dim}_\text{H}(\tilde E_m(x))$ is not constant when $x$ varies in $F(\eta)$; in fact the first digits in the expansion of $x$, $d_k(x)$ for $k$ small, have a large influence on this dimension. The fact that the set $F(\eta)$ is large enough is guaranteed by the fact that $\text{dim}_\text{H}(F(\eta)) >0$, from the discussion in \ref{mbeta}. 
The continuity of $\delta_{m, \eta}(\cdot)$ on $F(\eta)$ follows from the fact that $\varepsilon$-close irrational numbers $x, x'$ have the same digits in their $m$-expansion up to a certain large order $N(\varepsilon)$. This implies that the contraction rates $\gamma_{ij}(x)$ and $\gamma_{ij}(x')$ coincide up  to $N(\varepsilon)$, and on the other hand the tail series  $\mathop{\sum}\limits_{j \ge N(\varepsilon)}   
2^{- \delta \cdot (1+ R_i(x, j))}$ are $\varepsilon$-small. 

For the $m$-ary expansion, the digits of any normal number $x \in \mathcal{N}(m)$ have equal frequencies $(\frac 1m, \ldots, \frac 1m)$, and we can apply the above result.

\ b)  We can also construct non-stationary fractals $\tilde E_\beta(x)$ with contraction vectors $\Psi_i(x)$ that depend on all the digits of $x$, for quasinormal numbers $x$ in the $\beta$-expansion, for any $\beta>1, \beta \notin \mathbb Z$. In this case again the dimension of $\tilde E_\beta(x)$ varies continuously when $x$ ranges in a set of type $F(\eta)$, and one can consider the level sets of this dimension function. Moreover as showed in Lemma \ref{dim-beta}, the dimension of $F(\eta)$ is positive for $\beta$-expansions too. 

For the beta-expansion, any normal number $x \in \mathcal{Q}(\beta)$ has asymptotic frequencies given by the Parry formula (\ref{Parry}) applied in the Ergodic Theorem (\ref{beta}).
 
\end{proof}

\begin{remark}\label{remtilde}

By using the fractals $\tilde E_m(x) \subset \mathbb R^d$ associated to the $m$-ary expansion of $x$ and considering the dimension of $\tilde E_m(x)$, one can decompose the set of $(p_0, \ldots, p_{m-1})$-quasinormal numbers $F(p_0, \ldots, p_{m-1})$, into level sets of the  function $x \mapsto \text{dim}_\text{H}(\tilde E_m(x)), \ x \in F(p_0, \ldots, p_{m-1})$.
In this way, we can \textbf{distinguish} between various numbers $x \in [0, 1)$ which have the same asymptotic frequency vector $(p_0, \ldots, p_{m-1})$, but which have \textbf{different starting digits} $(d_1(x), d_2(x), \ldots, d_N(x))$, $N$ fixed. \
In particular Corollary \ref{cortildeE} can be applied to the respective sets of normal numbers in the $m$-ary or in the $\beta$-expansion.
$\hfill\square$
\end{remark}

\begin{remark}
In subsection \ref{f-exp} we shall study also equilibrium measures $\mu_\phi$ for piecewise H\"older potentials for more general $f$-expansions. Corollary \ref{cortildeE} can be proved also for $f$-expansions with finitely many values for the digits, such as the $m$-ary expansion, $\beta$-expansion and the Bolyai-Renyi expansion, and for stochastic vectors $\eta(\phi) = (\mu_\phi(I_1), \ldots, \mu_\phi(I_m))$, where $m$ is the maximal number of possible values for digits in that $f$-expansion.
$\hfill\square$
\end{remark}

We mention also that, a different type of construction of Moran fractals using frequencies in $m$-ary expansion was done in \cite{LD}. However, our class of fractals is completely different. First, our fractal $E(\omega)$ is modelled by a fixed sequence $\omega \in A^\mathbb N_\eta$, while their method considers projection of the set of all such sequences. Second, our method uses non-stationary contraction rates. Also our sets are infinitely generated, while theirs are finitely generated. Our construction is geared towards sequences of digits $\omega = (d_1(x), \ldots)$ of individual numbers $x$.

\subsection{The case of  countable sets of frequencies.}\label{infreq}
\ \ 
We will extend now Theorem \ref{theorem} to the case when we have an \textbf{infinite stochastic vector} of frequencies $\eta = (\eta_1, \eta_2, \ldots)$, i.e $\eta_i > 0, i \ge 1 \ \text{and} \  \mathop{\sum}\limits_i \eta_i = 1$. We have also an infinite collection of positive vectors $\Psi_i = (\gamma_{i1}, \ldots), i \ge 1$ for the non-stationary contraction rates. Assume that 
\begin{equation}\label{infinite}
\sup\{ \mathop{\sum}\limits_{j \ge 1} \gamma_{ij}, i \ge 1\} < \infty, \ \text{and} \ \sup\{\gamma_{ij}, i, j \ge 1\} < 1
\end{equation}

 Then clearly there exists $t \ge 0$ such that 
$
\sup \{\mathop{\sum}\limits_{j \ge 1} \gamma_{ij}^t, \ i \ge 1\} < \infty
$, and  denote by $\theta$ the infimum of the set of $t$'s with this property. 

We can construct as in Section 2, a countably generated non-stationary  Moran set associated to: $\Psi_i, i \ge 1$,  $\eta = (\eta_1, \eta_2, \ldots)$, a countable set $A = (a_i)_{i \ge 1}$,  and  an infinite vector $\omega = (\omega_1, \omega_2, \ldots) \in A^\mathbb N$ which satisfies the condition: $$\mathop{\lim}\limits_{n \to \infty} \frac{Card\{1 \le k \le n, \ \omega _k = a_i\}}{n} = \eta_i, \  i \ge 1$$ 

The convergence of the sequence \ $c_k(t) := \mathop{\sum}\limits_{i \ge 1} \frac{||\omega|_k||_{a_i}}{k} \log \mathop{\sum}\limits_{j \ge 1} \gamma_{ij}^t, \ k \ge 1,$  follows as in 2.5, with the exception that now we have the inequalities:
\begin{equation}\label{infc}
\aligned
&|c_n(t) - c_p(t)| = \left|\mathop{\sum}\limits_{i = 1}^N \left(\frac{||\omega|_n||_{a_i}}{n} - \frac{||\omega|_p||_{a_i}}{p}\right) \log \mathop{\sum}\limits_{ j = 1}^\infty \gamma_{ij}^t + \mathop{\sum}\limits_{i = N+1}^\infty \left(\frac{||\omega|_n||_{a_i}}{n} - \frac{||\omega|_p||_{a_i}}{p}\right) \log \mathop{\sum}\limits_{ j = 1}^\infty \gamma_{ij}^t\right|\\
&\le \mathop{\sum}\limits_{i = 1}^N \left|\frac{||\omega|_n||_{a_i}}{n} - \frac{||\omega|_p||_{a_i}}{p}\right| \cdot  |\log \mathop{\sum}\limits_{j = 1}^\infty \gamma_{ij}^t| + \left(1-\mathop{\sum}\limits_{i= 1}^N \frac{||\omega|_n||_{a_i}}{n}\right) M + \left(1-\mathop{\sum}\limits_{i = 1}^N \frac{||\omega|_p||_{a_i}}{p}\right) M,
\endaligned
\end{equation}
where $M \in (0, \infty)$ is such that $\left|\log \mathop{\sum}\limits_{ j = 1}^\infty \gamma_{ij}^t\right|  < M, \ \forall i \ge 1$.
Then given $\varepsilon >0$ we obtain from (\ref{infc}) that there exists $N = N(\varepsilon)$ such that $|1- \mathop{\sum}\limits_{1 \le i \le N} \eta_i | < \varepsilon$. Then there exists $n(\varepsilon)$ such that for every $1 \le i \le N$ and every $n > n(\varepsilon)$, we have $\left|\frac{||\omega|_n||_{a_i}}{n} - \eta_i\right| < \varepsilon$. \
Hence for any integers $n, p > n(\varepsilon)$ we obtain from above that
$|c_n(t) - c_p(t)| \le 4 \varepsilon M$. \
We can then obtain the pressure function $P(\cdot)$ defined on $(\theta, \infty)$ as: 
\begin{equation}\label{infpressure}
P(t) = \mathop{\lim}\limits_{k \to \infty} c_k(t) = \mathop{\sum}\limits_{i = 1}^\infty \eta_i \log \mathop{\sum}\limits_{j = 1}^\infty \gamma_{ij}^t
\end{equation}
As in Section 2 and by using (\ref{infc}), we can show that the pressure function $P: (\theta, \infty) \to \mathbb R$ is strictly decreasing, continuous and convex on $(\theta, \infty)$ and that it has a unique zero $h \in (\theta, \infty)$. \
Then, given a sequence of infinite vectors $\Psi_i, i \ge 1$, a stochastic infinite vector of frequencies $\eta = (\eta_1, \ldots)$, and some $\omega \in A^\mathbb N_\eta$, we can construct the countably generated non-stationary Moran fractal $E(\omega)$ as in Section 2. \ 

Now, let $h$ to be the unique zero of the above pressure function and consider an arbitrary $0 < s_* < h$. 
Then there exists an $\alpha = \alpha(s_*)>0$ such that \ $\sum\limits_{i=1}^\infty \eta_i \log \sum\limits_{j \ge 1} \gamma_{ij}^{s_*} > \alpha >0$. \ Also from our assumption in (\ref{infinite}), there exists $M \in (0, \infty)$ satisfying  $|\log \mathop{\sum}\limits_{j=1}^\infty \gamma_{ij}^{s_*}| \le M$, for $ i \ge 1$. 
Since $\mathop{\sum}\limits_{i \ge 1} \eta_i = 1$, this series has tails that go to 0, hence there exists $\tilde N(s_*)$ such that $\sum\limits_{i > \tilde N(s_*)} \eta_i < \frac{\alpha}{4M}$.  Moreover assume without loss of generality that, for the same $\tilde N(s_*)$, we have \ $\sum\limits_{i=1}^{\tilde N(s_*)} \eta_i \log \sum\limits_{j \ge 1} \gamma_{ij}^{s_*} > \frac{3\alpha}{4}$. \ 
Now for any $i \ge 1$, we know that $\lim_{n \to \infty}\frac{||\omega|_n||_{a_i}}{n} = \eta_i$, hence there exists an integer $N(s_*)$ such that for all $n > N(s_*)$, we obtain $$\big|\sum\limits_{i=1}^{\tilde N(s_*)}\frac{||\omega|_n||_{a_i}}{n} - \sum\limits_{i=1}^{\tilde N(s_*)} \eta_i \big| < \frac{\alpha}{4M},  \ \text{and} \ 
\sum\limits_{i=1}^{\tilde N(s_*)} \frac{||\omega|_n||_{a_i}}{n} \log \sum\limits_{j \ge 1} \gamma_{ij}^{s_*} >\frac{2\alpha}{3}$$

From the above inequalities, we obtain that 
\begin{align*}
|\sum\limits_{i = \tilde N(s_*)}^\infty \frac{||\omega|_n||_{a_i}}{n} \log &\sum\limits_{j \ge 1} \gamma_{ij}^{s_*}| \le M \cdot \sum\limits_{i = \tilde N(s_*)}^\infty \frac{||\omega|_n||_{a_i}}{n} = M \cdot (1- \sum\limits_{i = 1}^{\tilde N(s_*)} \frac{||\omega|_n||_{a_i}}{n})\\
&\le M \cdot (|\sum\limits_{i=1}^{\tilde N(s_*)} \frac{||\omega|_n||_{a_i}}{n} - \sum\limits_{i=1}^{\tilde N(s_*)} \eta_i|  + \sum\limits_{i > \tilde N(s_*)} \eta_i) < \frac{\alpha}{2}
\end{align*}

Therefore we conclude that for any integer $n > N(s_*)$, 
$$
\sum\limits_{i = 1}^\infty \frac{||\omega|_n||_{a_i}}{n} \log \sum\limits_{j \ge 1} \gamma_{ij}^{s_*} >0, \ \text{hence} \ 
\mathop{\sum}\limits_{\sigma \in D_n} c_\sigma^{s_*} >1$$
Similarly for $s^* > h$ there exists $N(s^*)$ such that for any $n > N(s^*)$ we have $\sum\limits_{\sigma \in D_n} c_\sigma^{s^*} < 1$.  
In the same fashion we can reprove the results 3.2-3.8 for the case of countably many contraction vectors $\Psi_i, i \ge 1$. Notice that the sets of type $J_\sigma, \sigma \in D_n$ form a cover with disjointed interiors of the fractal $E(\omega)$. This implies that in this case the analogue of Theorem \ref{theorem} will hold true.

\subsection{Fractals determined by asymptotic frequencies of digits in continued fractions expansions.} To apply the last construction for the case of countable values for the digits, we consider the \textbf{continued fraction} expansion. 
In this case  $T: [0, 1) \to [0, 1)$ is given by $$T(x) = \frac 1x - \left[\frac 1x\right], \ x \ne 0, \text{and} \ T(0) = 0$$
Any irrational number  $x \in [0, 1)$ can be represented uniquely in its continued fraction form as
$$x = \frac{1}{d_1(x) + \frac{1}{d_2(x) + \frac{1}{d_3(x) + \ldots}}}$$
The integers $d_k(x)$ are the digits of $x$ in its continued fraction expansion and are defined by: $d_n(x) := \left[\frac{1}{T^{n-1}(x)}\right], n \ge 1$. \  The map $T$ does not preserve the Lebesgue measure $\lambda$, but there exists 
a $T$-invariant ergodic measure $\mu_G$, i.e the \textit{Gauss measure} which is absolutely continuous with respect to $\lambda$ (see for eg. \cite{DK}). It is well-known that the Gauss measure satisfies  $$\mu_G(A) = \frac{1}{\log 2} \int_A \frac{1}{1+x} dx$$
It follows that we can apply the Ergodic Theorem for $T$ and $\mu_G$, and obtain the set of normal numbers for the continued fraction expansion $\mathcal{N}_{G} \subset [0, 1)$ which has $\mu_G$-measure 1 (and thus also Lebesgue measure 1). So  for any $x \in \mathcal{N}_G$ and $k \ge 1$ we have $$\frac 1n Card\{1 \le i \le n, d_i(x) = k\} \mathop{\to}\limits_{n \to \infty} \mu_G(x \in [0, 1), d_1(x) = k):= p_{G, k}$$

Let us consider now an infinite stochastic vector $\eta = (\eta_1, \eta_2, \ldots)$ and consider, as in \cite{KPW}  the probability $\nu_\eta$ which makes the digits $\{X_i\}_i$ in the continued fraction expansion to be independent random variables.  Namely  if the probability measure $\mu$ is defined by $\mu(d_n = j) = \eta_j, \ \forall n, j \ge 1,$ then the probability measure 
\begin{equation}\label{nup}
\nu_{\eta}(A) := \mu(X \in A),
\end{equation}
gives the distribution of the random variable $X = \frac{1}{d_1 + \frac{1}{d_2+ \frac{1}{ \ldots}}}$. \
The measure $\nu_\eta$ is useful in our case since it gives the asymptotic frequencies of appearance of digits for quasinormal numbers. Clearly $\nu_\eta$ is singular with respect to the Lebesgue measure if $\eta \ne \textbf{p}_G:= (p_{G, 1}, p_{G, 2}, \ldots)$. For a stochastic vector $\eta = (\eta_1, \eta_2, \ldots)$ define the set of $\eta$-\textit{quasinormal numbers} $F(\eta)$ to be $$F(\eta) := \{y \in [0, 1), \frac{Card\{1 \le i \le n, \ d_i(y) = k\}}{n} \mathop{\to}\limits_{k \to \infty} \eta_k, \ k \ge 1\}$$
From the Ergodic Theorem applied to the $T$-invariant ergodic probability $\nu_\eta$ (or from the Strong Law of Large Numbers), we know that  $\nu_\eta(F(\eta)) = 1$.
As before we denote by $\mathcal{Q}_G$ the union of all sets $F(\eta)$ over stochastic vectors $\eta$ with $\left|\mathop{\sum}\limits_{i \ge 1} \eta_i \log \eta_i\right| < \infty$, and call it the set of \textit{quasinormal numbers} for the continued fraction expansion.
In this setting,  Kinney and Pitcher proved in \cite{KP} that the dimension of the probability $\nu_\eta$ is given by:
$$\text{dim}_\text{H}(\nu_\eta) = \frac{-\mathop{\sum}\limits_{i \ge 1} \eta_i \log \eta_i}{2 \int_0^1 |\log x| \ d\nu_\eta(x)} >0,$$
hence from the fact that $F(\eta)$ has $\nu_\eta$-measure equal to 1, it follows that 
\begin{equation}\label{HFE}
\text{dim}_\text{H}(F(\eta)) >0
\end{equation}
In \cite{KPW} it was showed that there exists a constant $\varepsilon_0>0$ with $\text{dim}_\text{H}(\nu_\eta) \le 1- \varepsilon_0$.

Let us now take a \textbf{countable collection} of fixed infinite vectors $\Psi_i = (\gamma_{i1}, \gamma_{i2}, \ldots), i \ge 1$ satisfying (\ref{infinite}).  Then, for any stochastic infinite vector $\eta = (\eta_1, \eta_2, \ldots)$ satisfying $\left|\mathop{\sum}\limits_{i \ge 1} \eta_i \log \eta_i\right| < \infty$ and any quasinormal number $x \in F(\eta)$, we can construct an associated infinitely generated fractal $E(\omega) \subset \mathbb R^d$ with non-stationary contraction rates given by $(\Psi_i)_{i \ge 1}$ and $\eta$,  where $\omega := (d_1(x), d_2(x), \ldots)$. This set $E(\omega)$ will be denoted also by $E(x; G)$. 
Then the associated dimension function for the  continued fraction expansion, is defined by
$$H_G: \mathcal{Q}_G \to [0, \infty), \ H_G(x) := \text{dim}_\text{H}(E(x; G))$$
We can now prove a Theorem similar to the one in the $\beta$-expansion case:

\begin{theorem}\label{continued}
Consider a sequence of infinite positive vectors $\Psi_i, i \ge 1$ satisfying condition (\ref{infinite}), and the associated infinitely generated non-stationary Moran fractals $E(x; G) \subset \mathbb R^d,  \ x \in \mathcal{Q}_G$. Then the function $H_G$ is constant on dense subsets of positive Hausdorff dimension,  and for any $x \in \mathcal{Q}_G$ and any interval $U \subset [0, 1), \ x\in U$, it follows that $$H_G(\mathcal{Q}_G) = H_G(U \cap \mathcal{Q}_G)$$ 
\end{theorem}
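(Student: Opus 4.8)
The plan is to reduce the statement to the dimension formula for countably generated non-stationary Moran fractals established in subsection \ref{infreq}, and then to transplant the density argument from the proof of Theorem \ref{thm2}, now using continued-fraction cylinders in place of the $m$-ary or $\beta$-intervals. First I would invoke the extension of Theorem \ref{theorem} proved in \ref{infreq}: for any admissible infinite stochastic vector $\eta = (\eta_1, \eta_2, \ldots)$ (those with $|\sum_{i \ge 1}\eta_i \log \eta_i| < \infty$, which is exactly the class parametrizing $\mathcal{Q}_G$) and any $x \in F(\eta)$, the fractal $E(x; G) = E(\omega)$ with $\omega = (d_1(x), d_2(x), \ldots)$ has $H_G(x) = \text{dim}_\text{H}(E(x; G))$ equal to the unique zero $h(\eta) \in (\theta, \infty)$ of the pressure function $P_\eta(t) = \sum_{i \ge 1}\eta_i \log \sum_{j \ge 1}\gamma_{ij}^t$. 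The essential point is that this value depends on $x$ only through its asymptotic frequency vector $\eta$; hence $H_G \equiv h(\eta)$ on the entire level set $F(\eta)$.

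Next I would record that every $F(\eta)$ has positive Hausdorff dimension: this is precisely the content of (\ref{HFE}), which follows from the Kinney--Pitcher dimension formula for $\nu_\eta$ together with $\nu_\eta(F(\eta)) = 1$. Combined with the constancy of $H_G$ on $F(\eta)$ noted above, this already yields the first assertion, namely that $H_G$ is constant on sets of positive Hausdorff dimension.

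For the second assertion I would show that each $F(\eta)$ is dense in $[0, 1)$, arguing as in Proposition 10.1 of \cite{F}. The continued-fraction cylinders $I_{j_1 \ldots j_r} = \{y : d_i(y) = j_i, \ 1 \le i \le r\}$ are intervals whose diameters tend to $0$ as $r \to \infty$, so they form a net of $[0, 1)$: given any interval $U$ and any irrational $x_0 \in U$, there is an $r$ with $I_{d_1(x_0) \ldots d_r(x_0)} \subset U$. Now fix any admissible $\eta$ and any $y \in F(\eta)$, and let $z$ be the irrational whose continued-fraction digits are $(d_1(x_0), \ldots, d_r(x_0), d_1(y), d_2(y), \ldots)$; then $z \in I_{d_1(x_0) \ldots d_r(x_0)} \subset U$. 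Since prepending the fixed block of $r$ digits changes the count of each digit value by at most $r$, the limiting frequencies of $z$ coincide with those of $y$, whence $z \in F(\eta)$ and thus $z \in U \cap F(\eta)$.

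Finally I would assemble the pieces. Every value of $H_G$ is of the form $h(\eta)$ for some admissible $\eta$, and by the previous paragraph $U \cap F(\eta) \ne \emptyset$ for every such $\eta$, with $H_G \equiv h(\eta)$ there; consequently every value in $H_G(\mathcal{Q}_G)$ is already attained on $U \cap \mathcal{Q}_G$, giving $H_G(\mathcal{Q}_G) = H_G(U \cap \mathcal{Q}_G)$. The main obstacle I anticipate is not the algebra but the continued-fraction--specific geometry of the density step: unlike the $m$-ary case, the digit alphabet is infinite and the cylinders have variable, non-dyadic lengths, so one must genuinely verify the net property (diameters $\to 0$) and confirm that the concatenated number $z$ is both irrational and quasinormal with frequency vector exactly $\eta$. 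The deeper inputs --- the countable-generator dimension theorem of \ref{infreq} and the positivity $\text{dim}_\text{H}(F(\eta)) > 0$ from Kinney--Pitcher --- are already in place, so once the cylinder construction is justified the proof closes.
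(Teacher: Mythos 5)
Your proposal is correct and follows essentially the same route as the paper: both rest on the countable-frequency extension of Theorem \ref{theorem} from \ref{infreq} (so that $H_G$ depends on $x$ only through $h(\eta)$), the positivity $\text{dim}_\text{H}(F(\eta))>0$ from (\ref{HFE}), and a digit-concatenation density argument. The only cosmetic difference is that the paper realizes density by keeping the first $n$ digits of the target $y$ and appending the tail of a point of $F(\eta)$, whereas you keep a prefix of $x_0$ to land inside a cylinder contained in $U$ and append the digits of $y \in F(\eta)$ --- both hinge on the same observation that a finite prefix does not alter asymptotic frequencies.
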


\begin{proof}
We have from (\ref{HFE}) that the dimension of $F(\eta)$ is positive, hence there exist numbers $x \in [0, 1)$ for which the digits $1, 2, \ldots$ appear with  frequencies  $\eta_1, \eta_2, \ldots$ in the continued fraction expansion of $x$. Then if $x$ can be written in its continued fraction expansion as $$x = [d_1(x), d_2(x), \ldots],$$ and if $y = [d_1(y), d_2(y), \ldots] \in \mathcal{Q}_G$, we can take for any $n>1$,  the real number $y_n = [d_1(y), d_2(y), \ldots, d_n(y), d_{n+1}(x), d_{n+2}(x), \ldots]$. Then $y_n$ is close to $y$ and also the frequencies of digits in the continued fraction expansion of $y_n$, are those of $x$;  hence $y_n \in F(\eta)$. Hence $F(\eta)$ is dense in $\mathcal{Q}_G$; note that $F(\eta)$ has $\nu_\eta$-measure 1, but has Lebesgue measure zero if $\eta \neq \textbf{p}_G$.

Now we use the discussion in \ref{infreq} in order to extend Theorem \ref{theorem} to the case of countable set of frequencies. By using the same arguments we see that the proof extends to the setting of the Moran fractals $E(x; G)$ associated to $\eta$ and to the sequence of infinite vectors $\Psi_i, i \ge 1$. We assume that $x \in \mathcal{Q}_G$. 
Hence, $\text{dim}_\text{H}(E(x; G)) = h(\eta),$
where $h(\eta)$ is the unique zero of the pressure function $P(t)$, namely
$
\mathop{\sum}\limits_{i \ge 1} \eta_i \log \mathop{\sum}\limits_{j \ge 1} \gamma_{ij}^{h(\eta)} = 0$.
Since each of the sets $F(\eta)$ was shown above to be dense in $\mathcal{Q}_G$, and since $H_G$ is constant on $F(\eta)$,  it follows that given any interval $U$ containing $x$, all the possible values of $H_G$ are attained inside $U$. This finishes the proof.

\end{proof}

We will give in \ref{f-tilde} also a class of fractals $\tilde E_G(x)$ with contraction vectors $\Psi_i(x)$ determined by the continued fraction expansion of $x$, similar to the class introduced in \ref{tildeE}.

\subsection{The case of non-stationary fractals associated to other $f$-expansions.}\label{f-exp} \

General $f$-expansions have been studied by many authors and are important in the ergodic theory of numbers (for eg. \cite{A}, \cite{He}, \cite{KP}, \cite{KPW}, \cite{Re}, etc).  Our construction of infinitely generated non-stationary fractals works also by using digits of numbers in $f$-expansions.

Let first an integer $M \ge 2$ or $M = \infty$, and assume $f$ is either a strictly decreasing continuous function on $[1, M+1]$ with $f(1) = 1, f(M+1) = 0$, or $f$ is strictly increasing continuous on $[0, M]$ with $f(0) = 0, f(M) =1$. In the $f$-expansion, one wishes to represent real numbers by a repeated application of $f$ and extraction of integer parts at each step; clearly one has the identities $x = f(f^{-1}(x))$ and $f^{-1}(x) = [f^{-1}(x)] + \{f^{-1}(x)\}$. Thus one can define inductively the \textbf{digits} $d_k(x)$ and the remainders $r_k(x)$ of $x$, as follows:
$$d_0(x) = 0, \ r_0(x) = x, \ d_{k+1}(x) = [f^{-1}(r_k(x))], \ r_{k+1}(x) = \{f^{-1}(r_k(x))\}, \ k \ge 0$$

Then for any $x \in [0, 1)$, the series $$f(d_1(x) + f(d_2(x) + f(d_3(x) + \cdots ) \cdots ))),$$ converges to $x$, if certain technical conditions are satisfied (see \cite{Re}) namely: 

\ \ (C) \ \ in case $f$ is strictly decreasing, there exists $\alpha \in (0, 1)$ so that $|f(t) - f(s)| \le \alpha |t-s|$ if $1+f(2) < s<t$ and $|f(t) - f(s)| \le |t-s|$ if $0 \le s <t$; and,  in case $f$ is strictly increasing, we should have $|f(t) - f(s)| < |t-s|, \ 0 \le s <t$.

Now let us  take the associated transformation of $[0, 1)$, $$T(x) := f^{-1}(x) - [f^{-1}(x)], \ x \in [0, 1),$$ which is injective on the intervals $I_k:=f(k, k+1), 1 \le k \le M$ when $f$ decreases, or $0 \le k \le M-1$ when $f$ increases.  \ 

Assume the map $T$ satisfies the following regularity and expansiveness conditions (\cite{Wa1}):
 
\ i) $T|_{I_k}$ is $\mathcal{C}^2$, for all $k$;

\ ii) there exists some $p \in \mathbb N$ such that $\mathop{\inf}\limits_{x \in I_k, k \ge 1} |(T^p)'(x)| >1$;

\ iii) $\mathop{\sup}\limits_{x, y, z \in I_k, k \ge 1} |\frac{T''(x)}{T'(y) T'(z)}| < \infty$.

Then by using the Perron-Frobenius operator, Walters showed in \cite{Wa1} that $T$  has a unique absolutely continuous invariant probability $\mu_T$, which is in fact the equilibrium measure of the potential $-\log|T'|$. Moreover,  the probability $\mu_T$ is exponentially mixing (\cite{He}).  
In particular when $f(x) = \frac 1x$, we obtain the continued fraction expansion and its Gauss measure $\mu_G$. 

For general $f$-expansions consider now a potential $\phi$ on $\mathop{\cup}\limits_k I_k$, satisfying the growth conditions

\ a1) $\mathop{\sum}\limits_{y \in T^{-1}(x)} e^{\phi(y)} \le K_{T, \phi} < \infty, \ x \in \mathop{\cup}\limits_K I_k$;

\ a2) there exists some $\gamma >0$ and constants $C_k, k \ge 1$ such that $|\phi(x) - \phi(y)| \le C_k |x-y|^\gamma, \ x, y \in I_k$ and $\mathop{\sup}\limits_{x \in I_k, k \ge 1} C_k |T'(x)|^{-\gamma} < \infty$.

Then under conditions i), ii), iii), a1), a2), (C),  it follows from \cite{Wa1} that there exists a unique $T$-invariant equilibrium measure $\mu_\phi$ corresponding to $\phi$, which is also atomless and positive on open nonempty sets. \ If we now consider the infinite stochastic vector $$\eta(\phi) = (\mu_\phi(I_1), \mu_\phi(I_2), \ldots),$$ then from Ergodic Theorem applied to $T$ and $\mu_\phi$,  there exists a Borel set $F(\phi)\subset [0, 1)$ with $\mu_\phi(F(\phi)) = 1$ s.t every $x \in F(\phi)$ has asymptotic frequency vector $\eta(\phi)$ in its $f$-expansion.

Let us assume now that the contraction vectors $\Psi_i = (\gamma_{i1}, \gamma_{i2}, \ldots), \ i \ge 1$ are \textit{fixed}. 

Then, as in Section 2 we obtain for $x \in F(\phi)$ a \textbf{family of Moran fractals} $E(x; f, \phi) \subset \mathbb R^d$, constructed according to the digits of $x$ in its $f$-expansion.  
Thus from Theorem \ref{theorem}, for $x \in F(\phi)$, the Hausdorff dimension of $E(x; f, \phi)$ is equal to the unique number  $h(\phi)$ satisfying:
\begin{equation}\label{dim-phi}
\mathop{\sum}\limits_{i \ge 1} \mu_\phi(I_i) \cdot \log \mathop{\sum}\limits_{j \ge 1} \gamma_{ij}^{h(\phi)} = 0
\end{equation}

Notice also that, in general, for \textit{any} stochastic vector $\eta = (\eta_1, \eta_2, \ldots),$ one can introduce a probability $\nu_\eta$ as follows: \ first consider the measure $\mu_\eta$ for which $d_i(\cdot)$ are i.i.d. random variables and $\mu_\eta(d_n(\cdot) = k) = \eta_k, \ \forall n, k \ge 1$. Then let the measure $\nu_\eta$ defined  as the $\mu_\eta$-distribution of the random variable $X$ defined as  $$X(\cdot) := f(d_1(\cdot) + f(d_2(\cdot) + f(d_3(\cdot) + \ldots)))$$ 
Hence from the definition of $\nu_\eta$, we have for any arbitrary Borel set $B \subset [0, 1)$, that $\nu_\eta(B) = \mu_\eta(X \in B)$.   With respect to this measure $\nu_\eta$, the digits $d_i(\cdot)$ in the $f$-expansion, become independent random variables. \ 
However, the control on the set of $x$'s with frequency vector $\eta$ in their $f$-expansions is weaker than in the case when $\eta$ is given by  $\mu_\phi$. 

Equilibrium measures $\mu_\phi$ enjoy many metric and statistical properties, including in relation to dimension (for eg. \cite{Bo}, \cite{P}, \cite{Wa}, \cite{Wa1}, \cite{Mi}, \cite{Mi1}, etc).  
We obtain again a dimension function $H_f(x)= \text{dim}_\text{H}(E(x; f))$ on the set of quasinormal numbers $x$ in the $f$-expansion, which can be studied as before.

\subsection{Fractals $\tilde E_f(x)$ with vectors $\Psi_i(x)$ determined by the digits of $x$ in $f$-expansion.}\label{f-tilde} \ \ 
We introduce now another family of non-stationary fractals $\tilde E_f(x)$ whose infinite contraction vectors $\Psi_i(x), i \ge 1$ \textbf{depend on all the digits} $d_k(x)$ of $x$ in its $f$-expansion, by contrast to the sets in \ref{f-exp} which depended on fixed contraction vectors and on asymptotic frequencies of digits. Using the sets $\tilde E_f(x)$, we can obtain information about  the digits of $x$, for eg. related to the speed of convergence of partial frequencies of digits towards asymptotic frequencies.

We consider then a function $f$ satisfying conditions (C), i), ii), iii)  from \ref{f-exp}, and a potential $\phi$ on $\mathop{\cup}\limits_{k\ge 1} I_k$ satisfying  a1), a2); let also $\mu_\phi$ be the $T$-invariant equilibrium measure of $\phi$. We have two cases, according to whether the set of digit values $\mathcal{D}$ is finite or infinite.

If the digits in the $f$-expansion take values in a \textbf{finite set} $\mathcal{D}$, then we saw in \ref{tildeE} that we can construct fractals $\tilde E_f(x)$ such that the dimension of $\tilde E_f(x)$ is not necessarily constant, but varies continuously when $x$ ranges in a set of type $F(\eta)$ of $\eta$-quasinormal numbers. In particular we can choose $\eta = \eta(\phi)$ for a potential $\phi$ as above. Also we can prove Corollary \ref{cortildeE} for such fractals $\tilde E_f(x)$. 
Besides the $m$-ary and the $\beta$-expansions discussed in \ref{tildeE}, another $f$-expansion with finite $\mathcal{D}$ is the \textit{Bolyai-Renyi expansion}. 
The Bolyai-Renyi expansion is given by the increasing function $f_B(x) = \sqrt[m]{x+1} -1, \ x\in [0, 2^m-1]$, for an integer $m \ge 2$ fixed; then for a.e $x\in [0, 1)$ we have its representation as $$x = -1 + \sqrt[m]{d_1(x) + \sqrt[m]{d_2(x) + \sqrt[m]{d_3(x)+ \ldots }}},$$  where the digits $d_i(x), i \ge 1$ belong to the finite set $\mathcal{D} = \{0, 1, \ldots, 2^m -2\}$. The conditions from \ref{f-exp}  are satisfied for $f_B$ and its associated map $T_B: [0, 1) \mapsto [0, 1)$, $T_B(x) = \{(x+1)^m\}$;  hence there exists an absolutely continuous $T_B$-invariant probability $\mu_{T_B}$ in this case. \ 

Now, returning to the case of an $f$-expansion with finite digit set $\mathcal{D}$ and associated map $T: [0, 1) \mapsto [0, 1)$, we can form as in \ref{tildeE} the infinitely generated non-stationary fractals $\tilde E_f(x) \subset \mathbb R^d$ associated to quasinormal numbers $x \in F(\eta)$, for some arbitrary probability vector $\eta$. The contraction rates of $\tilde E_f(x)$ are $\gamma_{ij}(x; f), i \in \mathcal{D}, j \ge 1,$ and are given by: $$\gamma_{ij}(x; f)= 2^{-1 -R_i(x, j; f)}, \  \text{where} \ R_i(x, j; f):= Card\{k, 1 \le k \le j, \ d_k(x) = i\},$$ where $d_k(x), k\ge 1$, are the digits of $x$ in its $f$-expansion. Then, using the dimensions of the sets $\tilde E_f(x)$ as in \ref{tildeE}, we can distinguish between behaviors of digits for various numbers $x \in F(\eta)$, as the rates $\gamma_{ij}(x ; f)$ depend on the speed of convergence of $\frac{R_i(x, j; f)}{j}$ to the asymptotic frequency $\eta_i$, when $i \in \mathcal{D}$ and $j \to \infty$. In particular, this works for normal numbers $x \in \mathcal{N}(f)$, in which case $\eta_i = \mu_T(I_i), \ i \in \mathcal{D}$, with $\mu_T$ being the unique $T$-invariant absolutely continuous probability.

Assume now that the possible values for digits in $f$-expansion form an \textbf{infinite set} $\mathcal{D}$ (as in the continued fraction expansion). In this subtler case, we cannot always define $\tilde E_f(x)$ as in \ref{tildeE}, since we need condition (\ref{infinite}).
First, let the vector $\eta(\phi)$ depending on the potential $\phi$, $$\eta(\phi) = (\eta_1(\phi), \eta_2(\phi), \ldots) := (\mu_\phi(I_1), \mu_\phi(I_2), \ldots)$$ For an arbitrary quasinormal number in the $f$-expansion, $x \in \mathcal{Q}_f$, denote by $$R_i(x, j):= Card \{k, 1\le k \le j, d_k(x) = i\}, \  \text{for} \  i \ge 1, j \ge 1$$ 
For the infinite vector $\eta(\phi)$ introduced above, consider the set $F(\eta(\phi))$ of $\eta(\phi)$-quasinormal numbers in $f$-expansion, and let now a number $x \in F(\eta(\phi))$. 
Then we have $\lim\limits_{j \to \infty}\frac{R_i(x, j)}{j} = \eta_i(\phi) >0, \ i \ge 1$; hence for any $i \ge 1$, there exists a minimal integer $N_i(x) \ge 2$ such that the following condition is satisfied: \ $R_i(x, j) > j \cdot \eta_i(\phi)/2, \ \text{for every} \ j > N_i(x)$. \ 

 Consider now the infinite non-stationary \textbf{contraction vectors $\Psi_i(x), i \ge 1$, which depend on $x$},
$$
\Psi_i(x) = (\gamma_{i1}(x), \gamma_{i2}(x), \ldots), \ i \ge 1, \ \text{where}
$$
\begin{equation}\label{psix}
\gamma_{ij}(x) := \frac{\eta_i(\phi)}{N_i(x)} \cdot 2^{-R_i(x, j)}, \ i, j \ge 1
\end{equation}
Then condition (\ref{infinite}) is satisfied for these contraction rates. Indeed, $\gamma_{ij}(x) \le \frac 12, i, j \ge 1$ and, since $\eta_i(\phi)$ decreases to 0 when $i \to \infty$, there exists a constant $C \in (0,  \infty)$ s.t for all $i \ge 1$, 
$$
\sum\limits_{j \ge 1} \gamma_{ij} =  \sum\limits_{j=1}^{N_i(x)} \gamma_{ij} + \sum\limits_{j > N_i(x)} \gamma_{ij} \le \eta_i(\phi) + \eta_i(\phi) \sum\limits_{j > N_i(x)} \frac{1}{2^{j\cdot \eta_i(\phi)/2}} \le \eta_i(\phi)+ \frac{2\eta_i(\phi)}{1-2^{\eta_i(\phi)/2}}
 < C
$$
We can then construct as in Section 2, a fractal $\tilde E_f(x) \subset \mathbb R^d$ associated to contraction vectors $\Psi_i(x)$ and frequencies $\eta_i(\phi)$, for $i \ge 1$. Clearly this construction is non-stationary since $\gamma_{ij}(x)$ depend on $j$, and there are countably many $\Psi_i(x)$ depending on all digits of $x$. \ 
We show now that $ \text{dim}_\text{H}(\tilde E_f(x))$ varies continuously when $x$ ranges in $F(\eta(\phi))$. Also, it can be approximated with dimensions of finite constructions, which may be useful in computer estimates:

\begin{theorem}\label{f-tilde-dim}
 Consider a function $f$ and a potential $\phi$ as in the setting of \ref{f-tilde}; consider the equilibrium measure $\mu_\phi$ and let a number $x \in F(\eta(\phi))$, where $\eta(\phi) = (\mu_\phi(I_1), \mu_\phi(I_2), \ldots)$. Construct the associated fractal  $\tilde E_f(x)$, and denote $ \delta_{f, \phi}(x):= \text{dim}_\text{H}(\tilde E_f(x))$.

 \ \ a) Then $\delta_{f, \phi}(x)$ is the unique solution of the equation:
$$\sum\limits_{i = 1}^\infty \eta_i(\phi) \log \sum\limits_{j=1}^\infty  \big(\frac{\eta_i(\phi)}{N_i(x)} \cdot 2^{-R_i(x, j)}\big)^{\delta_{f, \phi}(x)} = 0$$

Then, the function $x \mapsto \delta_{f, \phi}(x)$ is continuous when $x$ varies in $F(\eta(\phi))$.

 \ \ b) Moreover for any $\delta>0$ there exist integers $k = k(\delta)$ and  $n(\delta)$ and constants $c_{\delta}, C_{\delta}>0$ so that, for every $n > n(\delta)$ there exists a Borel set $A(f; \delta, n) \subset [0, 1)$ satisfying: $$\mu_\phi(A(f; \delta, n)) \ge 1- C_\delta e^{-n \cdot c_\delta}, \ \text{and for every}  \ x \in A(f; \delta, n) \ \text{we have}$$  $$\delta_{f, \phi}(x) - \delta < \delta_{f, \phi}(x; k, n) \le \delta_{f, \phi}(x) + \delta,$$  where $\delta_{f, \phi}(x; k, n)$ is the unique solution of the equation in $t$ with finitely many terms,
$$\sum\limits_{i=1}^k \eta_i(\phi) \cdot \log \sum\limits_{j=1}^n \big(\frac{\eta_i(\phi)}{N_i(x)}\cdot 2^{-R_i(x, j)}\big)^t = 0$$ 
\end{theorem}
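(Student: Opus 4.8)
The plan is to treat part (a) as a direct instance of the countable-generator construction of subsection \ref{infreq}, and part (b) as a truncation-plus-large-deviations argument. For part (a), I would first recall that the verification preceding the statement already shows that the $x$-dependent vectors $\Psi_i(x)$ of (\ref{psix}) satisfy condition (\ref{infinite}). Hence the whole machinery of subsection \ref{infreq} applies to the collection $\{\Psi_i(x)\}_{i \ge 1}$ together with the stochastic vector $\eta(\phi)$ and the digit sequence $\omega = (d_1(x), d_2(x), \ldots)$, which lies in $A^{\mathbb N}_{\eta(\phi)}$ because $x \in F(\eta(\phi))$. The countable analogue of Theorem \ref{theorem} established there then gives $\text{dim}_\text{H}(\tilde E_f(x)) = \delta_{f,\phi}(x)$, the unique zero of the pressure $P(t) = \sum_{i \ge 1} \eta_i(\phi) \log \sum_{j \ge 1} \gamma_{ij}(x)^t$; substituting $\gamma_{ij}(x) = \frac{\eta_i(\phi)}{N_i(x)} 2^{-R_i(x,j)}$ from (\ref{psix}) yields exactly the displayed equation. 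For the continuity of $x \mapsto \delta_{f,\phi}(x)$ on $F(\eta(\phi))$ I would argue as in Corollary \ref{cortildeE}: two numbers $x, x' \in F(\eta(\phi))$ that are close share a long initial block of digits, so the quantities $R_i(x,j)$, $R_i(x',j)$ and the normalizers $N_i(x)$, $N_i(x')$ agree for all $i$ up to a large order, while the remaining tails of $\sum_j \gamma_{ij}^t$ and of $\sum_i \eta_i(\phi) \log \sum_j \gamma_{ij}^t$ are uniformly small by (\ref{infinite}); since each pressure function is strictly decreasing (the analogue of Lemma \ref{lemma1} proved in \ref{infreq}), closeness of the pressures transfers to closeness of their unique zeros.

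For part (b), the strategy is to approximate the infinite pressure $G_x(t) := \sum_{i \ge 1} \eta_i(\phi) \log \sum_{j \ge 1} \gamma_{ij}(x)^t$ by its doubly truncated version $G_x^{(k,n)}(t) := \sum_{i=1}^k \eta_i(\phi) \log \sum_{j=1}^n \gamma_{ij}(x)^t$, whose unique zero is $\delta_{f,\phi}(x; k, n)$ (existence and uniqueness follow from strict monotonicity as before). Truncating the outer sum at $k = k(\delta)$ costs at most $\sum_{i > k} \eta_i(\phi)\, |\log \sum_j \gamma_{ij}(x)^t|$, which I can make $< \delta$ uniformly once $k$ is large, using the summability guaranteed by (\ref{infinite}). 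Truncating the inner sum at $n$ costs $\sum_{j > n} \gamma_{ij}(x)^t = (\eta_i(\phi)/N_i(x))^t \sum_{j > n} 2^{-t R_i(x,j)}$, which decays geometrically precisely on the set of $x$ for which $R_i(x,j) > j \eta_i(\phi)/2$ holds for all $j > n$ and $i \le k$ and for which $N_i(x) \le n$. I would therefore \emph{define} $A(f; \delta, n)$ to be this good set. On $A(f; \delta, n)$ both truncation errors are $O(\delta)$ uniformly, so $G_x^{(k,n)}$ and $G_x$ are uniformly close near $t = \delta_{f,\phi}(x)$; tracking the sign of the discarded terms gives the one-sided inequality $\delta_{f,\phi}(x) - \delta < \delta_{f,\phi}(x; k, n) \le \delta_{f,\phi}(x) + \delta$, while strict monotonicity converts the pressure estimate into the dimension estimate, exactly in the spirit of Lemma \ref{lemma12}.

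The main obstacle is the exponential measure bound $\mu_\phi(A(f; \delta, n)) \ge 1 - C_\delta e^{-n c_\delta}$. Here I would invoke the thermodynamic formalism for the equilibrium state $\mu_\phi$ from subsection \ref{f-exp}: since $T$ is piecewise expanding with the regularity of i)--iii) and $\phi$ is piecewise H\"older satisfying a1)--a2), the measure $\mu_\phi$ is exponentially mixing (\cite{He}), which yields a large-deviation estimate for the Birkhoff sums $R_i(x,n) = \sum_{l=0}^{n-1} \chi_{I_i}(T^l x)$, namely $\mu_\phi(|R_i(x,n)/n - \mu_\phi(I_i)| > \varepsilon) \le C e^{-cn}$. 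Summing these estimates over the finitely many indices $i \le k$ and over the finitely many thresholds needed to force both $R_i(x,j) > j \eta_i(\phi)/2$ for $j > n$ and $N_i(x) \le n$ gives the claimed bound on $\mu_\phi(A(f; \delta, n)^c)$. The delicate point is to phrase the defining conditions of $A(f; \delta, n)$ so that the geometric tail control is uniform in $x \in A(f; \delta, n)$ while the excluded set still has exponentially small $\mu_\phi$-measure; this is where the exponential mixing of $\mu_\phi$, rather than mere ergodicity, is essential, and it is also what secures the stability of the normalizers $N_i(x)$ that entered part (a).
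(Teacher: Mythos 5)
Your proposal is correct and follows essentially the same route as the paper: part (a) via the countable-generator pressure of subsection \ref{infreq} and the digit-sharing continuity argument of Corollary \ref{cortildeE}, and part (b) via the same good set $A(f;k,n)$ (frequency condition $R_i(x,n')/n' > \eta_i(\phi)/2$ for $i \le k$, $n' > n$, which forces $N_i(x) \le n$), the same double truncation of the pressure, and monotonicity to transfer the pressure estimate to the zeros. The only cosmetic difference is that you derive the exponential measure bound from the exponential mixing of $\mu_\phi$ via a large-deviation estimate for the Birkhoff sums $R_i(x,n)$, whereas the paper cites the corresponding estimate from \cite{KPW} directly; the underlying mechanism is the same.
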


\begin{proof} 
\ \ a) The continuity of $\delta_{f, \phi}(\cdot)$ on $F(\eta(\phi))$ is proved similarly as in Corollary \ref{cortildeE}.

 \ \ b) For integers $k, n \ge 2$ let us introduce now the following notation: 
$$A(f; k, n):= \{x \in [0, 1), \  \frac{R_i(x, n')}{n'} > \eta_i(\phi)/2, \  1 \le i \le k, n' >n\} 
$$
Then it follows from  \cite{KPW} that for every $k \ge 2$ there exist constants $c_{k}, C_{k}>0$ such that: 
$$
\mu_\phi(A(f; k, n)) = \sum\limits_{n' >n} \mu_\phi\big(x \in [0, 1), \ \frac{R_i(x, n')}{n'} > \eta_i(\phi)/2, \ 1 \le i \le k\big) \ge 1- C_k e^{-n \cdot c_k}$$
For $x \in A(f; k, n)$ and for $1 \le i \le k$, we have $N_i(x) \le n$.
Notice that for $n$ large enough, $$\sum\limits_{j \ge 1} \gamma_{ij}(x)^t  = (1+o(n)) \mathop{\sum}\limits_{1 \le j \le n} \gamma_{ij}^t,$$  where the term $o(n)$ is independent of $t$.
Hence for any $\delta>0$ there exist integers $k=k(\delta)$ and $n(\delta)$ such that for every $n > n(\delta)$ and $x \in A(f; k, n)$, we have  the estimates $$\delta_{f, \phi}(x) - \delta \le \delta_{f, \phi}(x; k, n) \le \delta_{f, \phi}(x) + \delta,$$ where $\delta_{f, \phi}(x; k, n)$ is the unique solution of the equation with finitely many terms \ 
$\sum\limits_{i=1}^k \eta_i(\phi) \cdot \log \sum\limits_{j= 1}^n (\frac{\eta_i(\phi)}{N_i(x)}\cdot 2^{-R_i(x, j)})^t = 0$. Let us denote $A(f; k, n)$ by $A(f; \delta, n)$ as $k = k(\delta)$. 
Then the conclusion of the Theorem follows from the estimates obtained above.

\end{proof}

In the end, we remark that Theorem \ref{f-tilde-dim} applies also to the $T$-invariant absolutely continuous measure $\mu_T$, given as equilibrium measure of $- \log |T'|$. 
In particular, it applies to the \textbf{continued fraction expansion} with Gauss measure $\mu_G$, to study fractals $\tilde E_G(x)$ constructed from digits of normal numbers $x$, and to use them to distinguish between such numbers.

\

\

\  Eugen Mihailescu, \ \ Institute of Mathematics "Simion Stoilow" of the Romanian Academy, P.O Box 1-764, RO 014700, Bucharest, Romania.

Email: Eugen.Mihailescu\@@imar.ro \ \ \ \ \
Webpage: www.imar.ro/$\sim$mihailes

\

\ Mrinal Kanti Roychowdhury, \ \ Department of Mathematics,
University of Texas-Pan American,
1201 West University Drive, Edinburg, TX 78539-2999, USA.

Email: roychowdhurymk@utpa.edu \ \ \ \
Webpage: http://faculty.utpa.edu/roychowdhurymk

\end{document}